\newtheorem{theorem}{Theorem}[section]
\newtheorem{corollary}[theorem]{Corollary}
\newtheorem{lemma}[theorem]{Lemma}
\newtheorem{proposition}[theorem]{Proposition}
\theoremstyle{definition}
\theoremstyle{remark}
\newtheorem*{remark}{Remark}
\DeclareMathOperator\supp{supp}
\subjclass[MSC 2020]{35L05, 35R60}
\keywords{Klein-Gordon equation, maximal estimates, smoothing estimates, random data}
\begin{document} 
	
	\title{Pointwise convergence of the Klein-Gordon flow}
	
	\author[R. Lucà]{Renato Lucà}
	\address[R. Lucà]{Basque Center for Applied Mathematics, 48009 Bilbao, Basque Country and Ikerbasque, Basque Foundation for Science, 48011 Bilbao, Basque Country}
	
	\author[P. Merino]{Pablo Merino}
	\address[P. Merino]{Basque Center for Applied Mathematics, 48009 Bilbao, Basque Country}
	
	\date{}

	\maketitle
	
	\begin{abstract}
		We consider the PDEs version of the Carleson problem in the context of the cubic nonlinear Klein-Gordon equation. This 
		means that we aim to establish the lowest regularity class for which one has almost 
		everywhere pointwise convergence  of the solutions to the initial data, as $t \to 0$. We prove sharp results for initial 
		data in Sobolev spaces and for their randomized counterparts. 
	\end{abstract}
	
	\tableofcontents

	\section{Introduction}
	We consider the Cauchy problem associated to the 3d periodic Klein-Gordon 
	equation with a cubic nonlinearity, that is  	
	\begin{equation}\label{nlwcauchy}
		\left\{
		\begin{array}{l}
			\partial_{tt} u - \Delta u + u \pm u^3 = 0, 
			\\
			u(x,0) = u_0(x),  \ \ \partial_tu(x,0) = u_1(x).
		\end{array}
		\right.
	\end{equation}
	Here the solution is a real valued periodic function $u : \mathbb{T}^3 \times [0,\delta] \to \mathbb{R}$ and the initial datum is assigned 
	at the time $t=0$. Our results will be independent on the focusing or defocusing nature of the model. We are interested in the 
	pointwise behavior of the solutions and, more precisely, we will address the following 
	question: which is the 
	minimal regularity $s$ such that 
	$$u(t,x) \rightarrow u_0(x)  \quad \mbox{as} \quad t \rightarrow  0
	$$ 
	for  (Lebesgue) almost every $x \in \mathbb{T}^3$ and 
	for all $(u_0,u_1) \in H^s(\mathbb{T}^3) \times H^{s-1}(\mathbb{T}^3)$~?	
	Moreover, we will show that improved (pointwise) convergence results are available if we consider a suitable randomization 
	of the initial datum. 
	
	Before stating our results, we do a brief excursus on the pointwise convergence problem in the 
	context of dispersive PDEs. This problem was introduced by Carleson \cite{Carleson} in the 80s, for the linear Schr\"odinger 
	equation. He was interested in identifying the minimal Sobolev regularity of initial data 
	$f \in H^{s}(\mathbb{R}^d)$ for which one has
	$e^{it \Delta} f (x) \overset{t \to 0}{\to} f(x)$ for almost every $x \in \mathbb{R}^d$.   
	The 1d problem was solved by Carleson \cite{Carleson}, who showed that this (a.e.) pointwise convergence holds for $f \in H^{\frac{1}{4}}(\mathbb{R})$, and Dalhberg-Kenig \cite{DahlbergKenig}, who showed that $s \geq \frac{1}{4}$ is necessary on $\mathbb{R}^d$, $d \geq 1$. The higher dimensional case, on the other hand, has been only recently solved in \cite{Bourgain2016, DGL, Du2019} and the final solution required very deep
	harmonic analysis techniques. Namely, for any $d \geq 1$, $s \geq \frac{d}{2(d+1)}$ is necessary, and $s  > \frac{d}{2(d+1)}$ is sufficient up to the endpoint. We refer to \cite{MR904948, Vega, MR1194782, MR1671214, MR1748920, MR1748921, MR2033842, MR2264734, Bourgain2013, Luc2018, MR3613507, MR3903115, 1608.07640, MR3842310}
	and the references therein for intermediate results on this matter. It is 
	also worth to point out that the periodic problem is still open (in any dimension) and we refer 
	to \cite{MR2409184, WangZhang, EL21} for the most recent 
	results. In particular, $s > \frac{d}{d+2}$ is known to be sufficient, and $s \geq \frac{d}{2(d+1)}$ to be necessary. At the moment, in the periodic case almost sure convergence when $s \in [ \frac{d}{2(d+1)} , \frac{d}{d+2} ]$ remains an open problem. In the case of the Schr\"odinger operator, the nonlinear version of the 
	pointwise convergence problem (as well as its probabilistic counterparts) 
	has been studied by the first author, E. Compaan and G. Staffilani in \cite{CSL}. 
	
	As mentioned, the goal of this paper is 
	to develop a nonlinear  pointwise convergence theory in the case of the Klein-Gordon equation. To fix the ideas and avoid 
	unimportant technicalities, we will focus on the 3d cubic model \ref{nlwcauchy}, 
	that contains all of the interesting features of the problem. We will also focus on the periodic case, which does not present extra difficulties compared with the continuous one. One can indeed see that in the proof that we give below of the 
	sufficiency of the (optimal) condition $s > 1/2$ we can replace $\mathbb{T}^3$ with $\mathbb{R}^3$ without any change. We point out 
	that the same argument for the Schr\"odinger equation does not lead to the optimal regularity threshold. In the linear setting, it was proved in \cite{C831, C832, Walther} that
	one has $e^{it \langle D \rangle} f (x) \overset{t \to 0}{\to} f(x)$ for almost every $x \in \mathbb{T}^d$ if and only if 
	$s > 1/2$ (in any dimension). Our first objective is to prove a nonlinear counterpart of this result.
	\begin{theorem}\label{MainThm1}
		Let $s > 1/2$ and 
		$(u_0, u_1) \in H^{s}(\mathbb{T}^3) \times H^{s-1}(\mathbb{T}^3)$. 
		Let $u$ the (local in time) solution of the 
		Cauchy problem \eqref{nlwcauchy}. We have 
		$$ \lim_{t \to 0}
		|u(t,x) - u_0(x)| = 0, \quad \mbox{for almost every $x \in \mathbb{T}^3$} .
		$$ 
	\end{theorem}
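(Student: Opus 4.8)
The starting point is the Duhamel representation. Since the cubic nonlinearity is (heuristically, as on $\mathbb{R}^3$) scaling-critical at regularity $1/2$, the Cauchy problem \ref{nlwcauchy} is locally well posed in $H^s(\mathbb{T}^3)\times H^{s-1}(\mathbb{T}^3)$ for every $s>1/2$: the Klein--Gordon Strichartz estimates on $\mathbb{T}^3$ are inherited from $\mathbb{R}^3$ by finite speed of propagation, so a fixed-point argument in a Bourgain space $X^{s,b}$ (with $b>1/2$ close to $1/2$), or in a Strichartz space, yields, for some $\delta=\delta(\|u_0\|_{H^s},\|u_1\|_{H^{s-1}})>0$, a solution $u\in C([0,\delta];H^s)\cap X^{s,b}$ satisfying
\[
u(t)=\cos(t\langle D\rangle)\,u_0+\frac{\sin(t\langle D\rangle)}{\langle D\rangle}\,u_1\mp\int_0^t\frac{\sin((t-t')\langle D\rangle)}{\langle D\rangle}\,u(t')^3\,dt'.
\]
I would then split, for $t\in(0,\delta)$,
\[
u(t,x)-u_0(x)=\bigl(\cos(t\langle D\rangle)u_0(x)-u_0(x)\bigr)+\frac{\sin(t\langle D\rangle)}{\langle D\rangle}u_1(x)+w(t,x),
\]
where $w(t,x):=\mp\int_0^t\frac{\sin((t-t')\langle D\rangle)}{\langle D\rangle}u(t')^3\,dt'$ is the Duhamel term, and estimate the three summands separately.

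For the two linear summands I would reduce to the known linear theory. Using $\cos(t\langle D\rangle)=\tfrac12\bigl(e^{it\langle D\rangle}+e^{-it\langle D\rangle}\bigr)$ and $\frac{\sin(t\langle D\rangle)}{\langle D\rangle}=\tfrac1{2i}\langle D\rangle^{-1}\bigl(e^{it\langle D\rangle}-e^{-it\langle D\rangle}\bigr)$, together with $u_0\in H^s$, $\langle D\rangle^{-1}u_1\in H^s$ and $s>1/2$, the result of \cite{C831,C832,Walther} — equivalently the maximal bound $\|\sup_{0<|t|<1}|e^{it\langle D\rangle}f|\|_{L^2(\mathbb{T}^3)}\lesssim\|f\|_{H^s}$ for $s>1/2$, which also holds for $e^{-it\langle D\rangle}$ by time reversal — gives $e^{\pm it\langle D\rangle}u_0\to u_0$ and $e^{\pm it\langle D\rangle}\bigl(\langle D\rangle^{-1}u_1\bigr)\to\langle D\rangle^{-1}u_1$ almost everywhere as $t\to0$. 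Hence $\cos(t\langle D\rangle)u_0(x)-u_0(x)\to0$ and $\frac{\sin(t\langle D\rangle)}{\langle D\rangle}u_1(x)\to0$ for a.e.\ $x$, and the whole problem is reduced to proving $w(t,x)\to0$ for a.e.\ $x$ as $t\to0$.

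For the Duhamel term I would proceed in the standard pointwise-convergence paradigm: a maximal estimate plus approximation. Concretely, the plan is (i) to establish a maximal inequality for the inhomogeneous Klein--Gordon flow,
\[
\Bigl\|\sup_{0<t<\delta}\bigl|\textstyle\int_0^t\frac{\sin((t-t')\langle D\rangle)}{\langle D\rangle}F(t')\,dt'\bigr|\Bigr\|_{L^2(\mathbb{T}^3)}\lesssim\|F\|_{Y}
\]
for a suitable space-time norm $Y$ carrying \emph{negative} spatial regularity: the time truncation is removed by the Christ--Kiselev lemma, reducing matters to $\sup_{|\tau|<\delta}|e^{\pm i\tau\langle D\rangle}(\langle D\rangle^{-1}F(t'))|$, where the one-derivative smoothing of $\langle D\rangle^{-1}$ combines with the linear maximal bound and with the off-endpoint inhomogeneous Strichartz estimates; (ii) to verify that the cubic term of our solution lies in $Y$, i.e.\ $\|u^3\|_{Y}<\infty$ (and in fact $\to0$ as $\delta\to0$), using the $X^{s,b}$ (or Strichartz) bounds on $u$ from the local theory together with the multilinear estimates for cubic interactions on $\mathbb{T}^3$; and (iii) to conclude by density: approximating $(u_0,u_1)$ in $H^s\times H^{s-1}$ by smooth data $(u_0^{(n)},u_1^{(n)})$, the corresponding solutions $u^{(n)}$ are smooth, so their Duhamel terms $w^{(n)}$ are continuous on $[0,\delta/2]\times\mathbb{T}^3$ with $w^{(n)}(0,\cdot)\equiv0$; since $w-w^{(n)}=\mp\int_0^t\frac{\sin((t-t')\langle D\rangle)}{\langle D\rangle}\bigl(u(t')^3-u^{(n)}(t')^3\bigr)dt'$ and $u^3-(u^{(n)})^3=(u-u^{(n)})\bigl(u^2+uu^{(n)}+(u^{(n)})^2\bigr)$, the maximal inequality and continuous dependence give $\|\sup_{0<t<\delta}|w(t)-w^{(n)}(t)|\|_{L^2}\to0$, and the usual $\limsup$ argument then yields $w(t,x)\to0$ for a.e.\ $x$. (An alternative to (iii) is to show directly, via the same cubic estimates, that $w\in C([0,\delta];H^\sigma)$ for some $\sigma>3/2$, so that by Sobolev embedding $w$ is continuous on $[0,\delta]\times\mathbb{T}^3$ and vanishes at $t=0$, giving convergence for \emph{every} $x$; this uses the ``nonlinear smoothing'' produced by the strict convexity of the Klein--Gordon characteristic hyperboloid $\{\tau=\langle\xi\rangle\}$, i.e.\ the fact that the cubic resonance function $\sum_i\langle\xi_i\rangle-\langle\sum_i\xi_i\rangle$ is generically comparable to the largest frequency.)

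The main obstacle is step (ii): because the nonlinearity is scaling-critical exactly at $s=1/2$, for every admissible $s$ we are only marginally subcritical, and controlling $u^3$ in the space $Y$ (equivalently, the cubic trilinear/maximal estimate) has essentially no room to spare. This is precisely where the features of the \emph{Klein--Gordon} equation — as opposed to the wave equation — enter: the curvature of its characteristic hyperboloid, through a careful analysis of the cubic resonant interactions (carried out on $\mathbb{T}^3$, or transferred from $\mathbb{R}^3$ by finite speed of propagation), provides the small extra gain needed. Once $w$ is controlled, combining the three summands gives $|u(t,x)-u_0(x)|\le|\cos(t\langle D\rangle)u_0(x)-u_0(x)|+\bigl|\frac{\sin(t\langle D\rangle)}{\langle D\rangle}u_1(x)\bigr|+|w(t,x)|\to0$ for a.e.\ $x\in\mathbb{T}^3$, which is the assertion of the theorem.
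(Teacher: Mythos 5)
Your overall strategy --- rewrite as a first-order system, handle the linear half-wave evolutions via Cowling's maximal bound, and control the Duhamel term by a maximal inequality together with an approximation/density argument --- matches the paper's in spirit. The decomposition differs: you split $u(t)-u_0$ into the two linear evolutions plus the Duhamel contribution $w$ and approximate by smooth data, whereas the paper compares the full flow $\Phi_t$ with a frequency-truncated flow $\Phi^N_t$ and absorbs the remaining difference into the left-hand side. Both routes pass through the transference principle, Cowling's theorem for $e^{\pm it\langle D\rangle}$, and $X^{s,b}$ local well-posedness, and both are viable; the paper's truncation avoids any explicit appeal to Christ--Kiselev.

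Where your sketch goes astray is in the account of the ``main obstacle,'' step (ii). You claim that bounding $u^3$ in a suitable space requires the curvature of the Klein--Gordon characteristic hyperboloid and a careful analysis of the resonance function $\sum_i\langle\xi_i\rangle - \langle\sum_i\xi_i\rangle$. Neither is needed, and the paper uses neither. The paper's cubic estimate (Lemmas \ref{lemma:MainInt}, \ref{lemma:DetFixedPoint} and Corollary \ref{cor:EmbXsbj}) is elementary: the Duhamel operator gains one spatial derivative and one unit of modulation, so one only needs $u^3 \in X^{s-1,b-1}_{\delta,\pm}$; interpolating the Plancherel identity $X^{0,0}_{\delta,\pm}=L^2_{t,x}$ with the dual of a $\tfrac12$-admissible Strichartz embedding gives the continuous inclusion $L^{4/(4s+1)}_\delta L^{4/(5-4s)}\hookrightarrow X^{s-1,b-1}_{\delta,\pm}$; then $\|u^3\|_{L^{4/(4s+1)}_\delta L^{4/(5-4s)}}=\|u\|^3_{L^{12/(4s+1)}_\delta L^{12/(5-4s)}}$, and H\"older in time yields $\delta^{s-\frac12}\|u\|^3_{L^4_\delta L^{12/(5-4s)}}$, with $(4,\tfrac{12}{5-4s})$ an $s$-admissible pair. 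Only the linear Strichartz estimates enter. The positive power $\delta^{s-\frac12}$ shows the argument is genuinely subcritical for every $s>\tfrac12$, with room growing linearly in $s-\tfrac12$; there is no ``no room to spare'' issue at these regularities.

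Your alternative version of step (iii) is also not achievable. Obtaining $w\in C([0,\delta];H^\sigma)$ with $\sigma>\tfrac32$ would require $u^3 \in H^{\sigma-1}$ with $\sigma-1>\tfrac12$, i.e.\ more than a full extra derivative of nonlinear smoothing starting from $u\in H^s$ with $s$ near $\tfrac12$. This fails: even $u^3\in L^2$ already forces $u\in L^6$, which at the Sobolev scaling level corresponds to $u\in H^1$. The paper's Duhamel term stays in $X^{s,b}\hookrightarrow C_tH^s_x$ at the same $s$ as the data, and even the probabilistic half of the paper only extracts a $\tfrac12-\varepsilon$ gain in the restriction scale, far short of $H^{3/2+}$. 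You must therefore use the first version of (iii) --- approximation, maximal inequality, Chebyshev/limsup --- which is precisely what Lemma \ref{lemma:AdaptMaxEst_NL} of the paper formalizes.
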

	
	The statement is optimal for the following (two different) reasons. First, if $s < 1/2$ we do not have 
	local well-posedness for the 
	Cauchy problem~\eqref{nlwcauchy}; \cite{BurqTzvet}. Second, when $s \leq 1/2$ the a.e. pointwise convergence of 
	the solutions to the linearized problem to their initial data fails too; \cite{Walther}.

	In the second part of the paper we aim to lower the regularity assumption essentially up to
	$s>0$. As observed, this is impossible in the deterministic setting, since the statement is sharp. However, 
	we will see that this is possible considering a suitable randomization of the initial data, in the spirit of \cite{BurqTzvet}.  
	Thus, given  $(u_0, u_1) \in H^{s}(\mathbb{T}^3) \times H^{s-1}(\mathbb{T}^3)$ we randomize the Fourier coefficients as follows
	\begin{align}\label{RandomInitialData}
		u_0^w = \sum_{n \in \mathbb{Z}^3} h^w_n \hat{u}_0(n) e^{in \cdot x}, \quad 
		u_1^w = \sum_{n \in \mathbb{Z}^3} l^w_n \hat{u}_1(n) e^{in \cdot x},
	\end{align}
	where  
	$(h_n^w)_{n \in \mathbb{Z}^3}$, $(l_n^w)_{n \in \mathbb{Z}^3}$ are two sequences of 
	identically distributed pairwise independent sub-Gaussian random variables. 
	One can prove (see \cite{BurqTzvet}) that $w$-almost surely the following holds:
	$(u_0^w, u_1^w) \in H^{s}(\mathbb{T}^3) \times H^{s-1}(\mathbb{T}^3)$
	but  $(u_0^w, u_1^w) \notin H^{s + \varepsilon}(\mathbb{T}^3) \times H^{s-1 + \varepsilon}(\mathbb{T}^3)$ (for all $\varepsilon >0$).
	Moreover, for all $s' < s$ one has $(u_0^w, u_1^w) \in C^{s'}(\mathbb{T}^3) \times C^{s'-1}(\mathbb{T}^3)$ and $(u_0^w, u_1^w) \in W^{s,p}(\mathbb{T}^3) \times W^{s-1,p}(\mathbb{T}^3)$ for all $p \in [1,\infty)$, again 
	$w$-almost surely. Loosely speaking, the latter means that the randomization 
	improves the $L^p$ integrability, while the former means that it does not improve the regularity of the original functions. 
	
	In \cite{BurqTzvet} the authors proved that the Cauchy problem 
	\begin{equation}\label{nlwcauchyProb}
		\left\{
		\begin{array}{l}
			\partial_{tt} u^w - \Delta u^w + u^w \pm (u^w)^3 = 0, 
			\\
			u^w(x,0) = u^w_0(x),  \ \ \partial_t u^w(x,0) = u^w_1(x).
		\end{array}
		\right.
	\end{equation}
	is $w$-almost surely 
	locally well posed, for all $s>0$. This is particularly interesting since one can not establish 
	any deterministic local well-posedness result at this level of regularity (indeed one must require $s\geq1/2$). A natural question 
	is thus to establish whether for these solutions we have almost everywhere pointwise convergence to the initial data. This 
	is proved in our second main result, namely Theorem \ref{MainThm2}, that extends Therorem \ref{MainThm1} in the probabilistic sense, to the 
	whole range of regularity $s>0$. 
	
	\begin{theorem}\label{MainThm2}
		Let $s > 0$ and 
		$(u_0^w, u_1^w)$ as in \eqref{RandomInitialData}. For $w$-almost every initial datum $(u_0^w, u_1^w)$  
		we have  
		$$ \lim_{t \to 0}
		|u^w(t,x)  - u^w_0(x)| = 0, \quad \mbox{for almost every $x \in \mathbb{T}^3$},
		$$ 
		where $u^w$ is the solution of the Cauchy problem \eqref{nlwcauchyProb} constructed in \cite{BurqTzvet}.
	\end{theorem}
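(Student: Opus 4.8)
The plan is to exploit the decomposition of the random solution $u^w$ into its linear evolution plus a nonlinear remainder that lives in a smoother space. Write $u^w = v^w + w^w$, where $v^w(t) = \cos(t\langle D\rangle) u_0^w + \frac{\sin(t\langle D\rangle)}{\langle D\rangle} u_1^w$ is the linear Klein-Gordon flow applied to the randomized data, and $w^w = u^w - v^w$ is the Duhamel term. The construction of \cite{BurqTzvet} shows that $w$-almost surely the map $t \mapsto w^w(t)$ is continuous with values in $H^{\sigma}(\mathbb{T}^3)$ for some $\sigma > 1/2$ (in fact one gains a full derivative over the data, landing near $H^{1/2+s}$, which suffices once $s > 0$ — one should double-check the exact regularity exponent obtained in \cite{BurqTzvet}, but it comfortably clears the threshold $1/2$). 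For this smooth remainder, pointwise convergence $w^w(t,x) \to w^w(0,x) = 0$ for a.e.\ $x$ is immediate: continuity into $H^{\sigma}$ with $\sigma > 3/2$ gives uniform convergence, and even for $\sigma > 1/2$ one can invoke the linear maximal estimate of \cite{C831,C832,Walther} — or simply the deterministic theory underlying Theorem \ref{MainThm1} — to get the a.e.\ limit. Hence the whole problem reduces to showing that $v^w(t,x) \to u_0^w(x)$ for a.e.\ $x$, $w$-almost surely.

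For the linear piece, the point is that randomization buys us $L^p$-integrability of the Fourier multipliers, which lets us run a Littlewood–Paley / square-function argument at regularity $s > 0$ that would fail deterministically (where one genuinely needs $s > 1/2$). Concretely, decompose $v^w = \sum_{N} P_N v^w$ into dyadic frequency blocks and estimate the maximal function $\sup_{0 < t < \delta} |P_N(v^w(t) - v_0^w)|$. On a single block, $v^w(t) - v^w(0) = (\cos(t\langle D\rangle) - 1) P_N u_0^w + \frac{\sin(t\langle D\rangle)}{\langle D\rangle} P_N u_1^w$, and since $|\cos(t\langle n\rangle) - 1| \lesssim (t\langle n\rangle)^{2\theta} \lesssim (tN)^{2\theta}$ for any $\theta \in [0,1]$ and $|\sin(t\langle n\rangle)/\langle n\rangle| \lesssim t$, we get smallness in $t$ at the cost of powers of $N$; summing the resulting geometric-type series in $N$ requires controlling $\|P_N u_0^w\|$ and $\|P_N u_1^w\|$ in a suitable norm with an $N^{-\varepsilon}$ to spare. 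The key probabilistic input is the Paley–Zygmund / Khintchine inequality: $w$-almost surely (or on a set of measure $\geq 1 - \epsilon$, then by a standard $\sigma$-algebra argument almost surely), the randomized data satisfy $\|u_0^w\|_{W^{s',p}} + \|u_1^w\|_{W^{s'-1,p}} < \infty$ for every $p < \infty$ and every $s' < s$ — this is the $C^{s'}$ regularity noted in the excerpt, upgraded to $L^p$-based spaces. Choosing $p$ large and $s'$ close to $s$, we can afford to lose $1/p$-many derivatives in a Sobolev embedding $W^{s',p} \hookrightarrow C^{s' - 3/p}$ (or directly estimate the maximal operator in $L^p_x$ via the trivial bound $\sup_t |\cdot| \le \sum_N \sup_t |P_N \cdot|$ and Bernstein), so that $s' - 3/p > 0$ still holds, and then the $t$-gain $(tN)^{2\theta}$ with $\theta$ chosen so that $2\theta < s' - 3/p$ closes the sum over $N$ and sends the whole thing to $0$ as $t \to 0$, uniformly in $x$.

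More carefully, and to get the a.e.\ statement cleanly rather than uniform convergence (which may be too strong if $s$ is very small relative to what the $L^p$ gain provides), I would prove a maximal estimate of the form
\begin{equation}\label{maximalrandom}
\Big\| \sup_{0 < t < \delta} |v^w(t,x) - u_0^w(x)| \Big\|_{L^p_x(\mathbb{T}^3)} \lesssim \delta^{\kappa}\big( \|u_0^w\|_{W^{s',p}} + \|u_1^w\|_{W^{s'-1,p}} \big)
\end{equation}
for some $\kappa > 0$, some $p < \infty$, and some $s' \in (0,s)$, valid for a.e.\ $w$. Granting \eqref{maximalrandom}, a standard argument finishes the proof: the right-hand side is $w$-a.s.\ finite, so letting $\delta \to 0$ along a sequence forces $\sup_{0<t<\delta}|v^w(t,x) - u_0^w(x)| \to 0$ for a.e.\ $x$, i.e.\ $\lim_{t\to 0} v^w(t,x) = u_0^w(x)$ a.e., and combining with the remainder estimate for $w^w$ gives the theorem. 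To establish \eqref{maximalrandom} one dominates the sup by $\sum_N \sup_t |P_N(v^w(t) - u_0^w)|$, uses Bernstein to pass from $L^\infty_t$ in the multiplier to an $L^2_t$ or pointwise-in-$t$ bound on each block, and applies the elementary time-oscillation bounds above together with the frequency-localized norms of the randomized data.

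I expect the main obstacle to be organizing the interplay between the three parameters — the integrability exponent $p$, the auxiliary regularity $s' < s$, and the time-oscillation exponent $\theta$ — so that the geometric series in $N$ converges for every $s > 0$ no matter how small, while keeping track that the $L^p_x(\mathbb{T}^3)$ norms on the right of \eqref{maximalrandom} are indeed $w$-a.s.\ finite (this is where one must quote precisely the sub-Gaussian moment bounds and the Khintchine/Paley–Zygmund inequality, rather than the $C^{s'}$ statement which is only an $L^\infty$-flavored consequence). A secondary technical point is to make sure the decomposition $u^w = v^w + w^w$ and the regularity of $w^w$ are exactly as produced by the fixed-point scheme of \cite{BurqTzvet} — in particular that the nonlinear part genuinely gains enough derivatives to fall under the scope of Theorem \ref{MainThm1}'s deterministic theory — which should be a matter of carefully reading off the function spaces used there rather than a genuine difficulty.
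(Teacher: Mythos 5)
Your decomposition $u^w = v^w + w^w$ into free evolution plus Duhamel remainder is exactly the one the paper uses (in its notation, $u^w_\pm = z^w_\pm + v_\pm$), and the overall two-step strategy — handle the linear piece by exploiting the sub-Gaussian moment bounds, handle the remainder by deterministic smoothing — is the right one. For the linear piece your plan (dyadic decomposition, time-oscillation bounds $|e^{it\langle n\rangle}-1|\lesssim (t\langle n\rangle)^{\theta}$, Bernstein, and sub-Gaussian moment estimates to close a geometric series in $N$) is close in spirit to the paper's Lemma~\ref{RandomUniformPCL}, which proves $w$-a.s.\ \emph{uniform} convergence $\sup_x|e^{\pm it\langle D\rangle}f^w - f^w|\to 0$ via a Borel--Cantelli argument. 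A minor correction: the probabilistic input you want is the Khintchine-type moment bound for sums of sub-Gaussians (Lemma~\ref{lemma:SG_KeyIneq}), not Paley--Zygmund, which is a lower-bound inequality and plays no role here.

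The genuine gap is in your treatment of the Duhamel term $w^w$. You assert that a.e.\ convergence $w^w(t,x)\to 0$ is ``immediate'' from continuity into $H^\sigma$ for some $\sigma>1/2$, or from ``the deterministic theory underlying Theorem~\ref{MainThm1}.'' Neither claim is correct as stated. Continuity in $H^\sigma$ for $\sigma\in(1/2,3/2]$ gives convergence in norm, not pointwise a.e.\ convergence, and the Cowling--Walther maximal estimate \eqref{cowlingLemma} controls $\sup_t|e^{it\langle D\rangle}g|$ for a fixed datum $g$ — it does not apply to the Duhamel integral $\int_0^t e^{-i(t-\tau)\langle D\rangle}\langle D\rangle^{-1}(\cdots)d\tau$, which is not of that form. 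Nor can Theorem~\ref{MainThm1} be invoked off the shelf: it requires the \emph{initial datum} to lie in $H^{>1/2}$, whereas here the data are only in $H^\sigma$, $\sigma>0$, and the smoothness is manufactured only in the remainder, inside the fixed-point scheme. The correct mechanism, which your proposal elides, is the transference principle (Lemma~\ref{TransferLemma}), which upgrades the linear maximal estimate to the embedding $\|\sup_t|F|\|_{L^2_x}\lesssim\|F\|_{X^{s,b}_{\delta,\pm}}$ for $s,b>1/2$. After that, a bounded maximal function still does not yield a.e.\ convergence by itself; one needs either (i) a vanishing bound $\|v_\pm\|_{X^{s,b}_{\delta,\pm}}\lesssim \delta^{s-1/2}\cdot(\text{a.s.\ finite})$ and a $\delta\to 0$ argument, or (ii) the frequency-truncation approximation argument of Lemma~\ref{lemma:AdaptMaxEst_NL}, which is what the paper carries out in Section~\ref{Sec:Proof2}: it compares the full and truncated Duhamel terms, splits into a main term $I_{N,\pm}$ and remainder $R_{N,\pm}$, and uses the $L^4_\delta L^{12/(5-4s)}$ corollary~\ref{cor:LppEmb} together with an absorption step. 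You gesture at the $\delta\to 0$ route for the linear part via \eqref{maximalrandom}, but you never set up the analogous restriction-space bound with $\delta$-gain for $w^w$, which is where essentially all the work of Section~\ref{Sec:Proof2} lives. Filling this in would bring your proposal into alignment with the paper; without it, the argument for the nonlinear part is incomplete.
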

	The rest of the paper is organized as follows. In Section \ref{Sec:SOTP} we describe informally the strategy 
	of the proofs of our main theorems. In Section \ref{Sec:TCT} we recall the Cowling theorem for the a.e. 
	pointwise convergence of the half wave operator $e^{it \langle D \rangle}$. This linear result will be also 
	fundamental in the nonlinear analysis. In Section \ref{sec:TMATNPC} we adapt the Maximal estimates method 
	(from the linear) to the nonlinear 
	framework. This is the main idea behind the proofs and it is indeed worth 
	to stress out that maximal estimates 
	are the strongest tool 
	at our disposal to deduce pointwise almost everywhere convergence.   In Section \ref{Sec:DetPre} we recollect some 
	preliminary results of the local well posedness that will be used in the proof of Theorem \ref{MainThm1}, 
	that is done in Section \ref{Sec:Proof1}.
	In Section \ref{Sec:ProbPre} we recollect some 
	preliminary results of the local well posedness that will be used in the proof of Theorem \ref{MainThm2}, 
	that is done in Section \ref{Sec:Proof2}. 
	
	\section{Notations}
	\begin{itemize} 
		\item
		Given $x \in \mathbb{R}^d$, we abbreviate $\langle x \rangle := (1 + |x|^2)^{1/2}$. We will denote with~$\langle D \rangle$ the Fourier multiplier with symbol $\langle \xi \rangle$, i.e. $\langle D \rangle = \sqrt{1-\Delta}$.
		\item We 
		abbreviate  
		$L^p_{\delta}L^q = L^p_t([0,{\delta}],L^q_x(\mathbb{T}^3))$.
		\item Given $X$ and $Y$ two Banach spaces with norms $\| \cdot \|_X$ and
		$\| \cdot \|_Y$ respectively, we endow the product space $X \times Y$ with the 
		norm  $\| \cdot \|_{X \times Y} = \| \cdot \|_X +  \| \cdot \|_Y$.
		\item We denote with $X^{s,b}_{\delta,+} \times X^{s,b}_{\delta,-}$  
	the restriction space adapted to the linear flows $e^{\mp i t \langle D \rangle}$. We thus define
	\begin{align}\label{RestrSpaceDef}
		\|F\|_{X^{s,b}_{\delta,\pm}} = \inf_{G = F \text{ on } t \in [0,\delta]} \|G\|_{X^{s,b}_{\pm}},
	\end{align}
	where 
	\begin{align*}
		\|F\|^2_{X^{s,b}_{\pm}} = \int_{\mathbb{R}} \sum_{n \in \mathbb{Z}^3} \langle \tau \pm \langle n \rangle \rangle^{2b} \langle n \rangle^{2s} |\widetilde{F}(n,\tau)|^2 d\tau,
	\end{align*}
	and $\widetilde{F}$ is the space-time Fourier transform of $F$.  There is a useful way of characterizing this restriction space norms through mixed Sobolev norms, since
	\begin{align}
		&\|F\|_{X^{s,b}_{\pm}} =  \| e^{\pm i t \langle D \rangle} F\|_{H^s_x(\mathbb{T}^3)H^b_t(\mathbb{R})}.
		\label{RestrictionSobolevChar}
	\end{align}		
		\item
		Given $R>0$, we denote $B(0,R)$ the balls centered
		in $0$ with radius $R$ within the space $X^{s,b}_{\delta,+} \times X^{s,b}_{\delta,-}$, 
		and $B_{\pm}(0,R)$ the balls centered in $0$ with radius $R$ within the space $X^{s,b}_{\delta,\pm}$. 
		\item
		$P_{\leq N}$ will be the frequency projection on the ball of radius $N$, centered in the origin; as well as $P_N = P_{\leq N} - 	P_{\leq N/2}$ and $P_{>N} = Id - P_{\leq N}$. In this sense, when nothing else is said, $N$ will denote an arbitrary element of a nonnegative increasing sequence diverging to~$\infty$.
		\item When we apply a $\sup$ or a $|\cdot|$ to a vector of functions, we are applying it componentwise. For instance, for functions $f : A \rightarrow \mathbb{R}$ and $g:A \rightarrow \mathbb{R}$, then
		\begin{align*}
			\sup_{x \in A}|(f(x) , g(x))| = (\sup_{x \in A} |f(x)| , \sup_{x \in A} |g(x)| ).
		\end{align*}
		The same will be considered with projection operators from the previous point. 		
	\end{itemize}
	\section{Strategy of the proof}\label{Sec:SOTP}
	
	Assume, to fix the ideas, that we are in the focusing case.
	We will actually rewrite the cubic Klein-Gordon equation \eqref{nlwcauchy} as 
	\begin{equation}\label{nlwdec}
		\left\{
		\begin{array}{l}		
			( \partial_t \pm i \langle D \rangle ) u_{\pm} = \pm i \langle D \rangle^{-1}\left( \left( \frac{u_+ + u_-}{2} \right)^3 \right), 
			\\
			u_{\pm}(x,0) = u_{0,\pm}(x), 
		\end{array}
		\right.
	\end{equation}
	where 
	\begin{align}
		u_{\pm} = u \pm i \langle D \rangle^{-1}\partial_t u, \quad u_{0,\pm} =  u_0(x) \pm i \langle D \rangle^{-1} u_1(x).
		\label{decic}
	\end{align}

	In practice, we will actually work with 
	the corresponding 
	Duhamel formulation of the problem
	\begin{align}\label{ourflow}
		\begin{pmatrix} u_+(t,x) \\  \\ u_-(t,x) \\\end{pmatrix} =  \begin{pmatrix} 
			e^{-it\langle D \rangle} u_{0,+} + i\int_0^t \frac{e^{-i(t-\tau)\langle D \rangle}}{\langle D \rangle}  \left( \frac{u_+(\tau) + u_-(\tau)}{2} \right)^3  d\tau
			\\
			\\
			e^{it\langle D \rangle} u_{0,-} - i\int_0^t \frac{e^{i(t-\tau)\langle D \rangle}}{\langle D \rangle} \left( \frac{u_+(\tau) + u_-(\tau)}{2} \right)^3  d\tau	\\\end{pmatrix}.
	\end{align}
	
	Note that $(u_0,u_1) \in H^s(\mathbb{T}^3) \times H^{s-1}(\mathbb{T}^3)$ if 
	and only if $u_{0,\pm} \in H^s(\mathbb{T}^3)$, thus we have reduced Theorem \ref{MainThm1}
	to the following.
	\begin{theorem}\label{MainThm1Bis}
		Let $s>1/2$ and $(u_{0,+}, u_{0,-}) \in H^s(\mathbb{T}^3) \times H^s(\mathbb{T}^3)$. We have	
		\begin{equation}\label{MainThm1BisEstimate}
			\lim_{t \to 0} |u_{\pm}(x, t) - u_{0,\pm}(x) | = 0, \quad \mbox{for almost every $x \in \mathbb{T}^3$}.
		\end{equation}
	\end{theorem}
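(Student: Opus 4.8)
The plan is to split the solution as $u_{\pm} = e^{\mp it\langle D\rangle} u_{0,\pm} + v_{\pm}$, where $v_{\pm}$ is the Duhamel (nonlinear) term in \eqref{ourflow}, and to treat the two pieces by entirely different mechanisms. For the linear part, I would invoke the Cowling-type theorem recalled in Section \ref{Sec:TCT}: since $u_{0,\pm}\in H^s(\mathbb{T}^3)$ with $s>1/2$, we already have $e^{\mp it\langle D\rangle} u_{0,\pm}(x)\to u_{0,\pm}(x)$ as $t\to 0$ for a.e.\ $x$. Thus it remains to prove that the Duhamel term $v_{\pm}(t,x)\to 0$ a.e.\ as $t\to 0$; since $v_{\pm}(0,x)=0$, this is really a statement that $v_{\pm}$ is continuous (in a pointwise a.e.\ sense) at $t=0$ uniformly enough. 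The cleanest route is to show a maximal-function bound of the form $\big\| \sup_{0<t<\delta} |v_{\pm}(t,\cdot)| \big\|_{L^q(\mathbb{T}^3)} \lesssim \delta^{\theta}\, C(\|u_{0,\pm}\|_{H^s})$ for some $q\geq 1$ and some power $\theta>0$, which both controls the limsup and yields that it vanishes as $\delta\to0$; this is exactly the ``nonlinear maximal estimate'' philosophy advertised in Section \ref{sec:TMATNPC}.

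Concretely, I would carry it out in the following order. First, fix $s>1/2$ and use the local well-posedness theory for \eqref{nlwdec} (recalled in Section \ref{Sec:DetPre}) to place the solution $(u_+,u_-)$ in a Bourgain space $X^{s,b}_{\delta,+}\times X^{s,b}_{\delta,-}$ with $b>1/2$, on a time interval $[0,\delta]$ whose length depends only on the data norm. Second, I would bound the cubic nonlinearity $F:=\big(\tfrac{u_++u_-}{2}\big)^3$ in an appropriate space — using the algebra/multilinear estimates in $X^{s,b}$ that underlie the well-posedness, together with $s>1/2$ which gives $H^s(\mathbb{T}^3)\hookrightarrow L^\infty$ and makes $H^s$ an algebra — so that $\langle D\rangle^{-1}F$ lands in $X^{s+1-\epsilon, b-1+\epsilon'}$ or, more usefully, in $L^1_t H^{s'}$ for some $s'$ comfortably above $1/2$. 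Third, I would estimate $\sup_{0<t<\delta}|v_\pm(t,x)|$ by moving the supremum inside the time integral,
\begin{equation*}
\sup_{0<t<\delta} |v_\pm(t,x)| \;\leq\; \int_0^{\delta} \Big| \langle D\rangle^{-1} e^{\mp i(t-\tau)\langle D\rangle} F(\tau) \Big|\Big|_{t=\,\cdot}\, d\tau,
\end{equation*}
and then applying the linear maximal estimate for $e^{\mp it\langle D\rangle}$ from Section \ref{Sec:TCT} (which controls $\|\sup_t |e^{\mp it\langle D\rangle} g|\,\|_{L^q}$ by $\|g\|_{H^{s''}}$ for $s''>1/2$) to the integrand $\langle D\rangle^{-1} F(\tau)$, whose $H^{s''}$ norm is summable in $\tau$ by the previous step. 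Collecting the $\tau$-integral and using $|[0,\delta]|=\delta$ produces the factor $\delta^{\theta}$.

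The main obstacle I expect is the accounting in step two: one needs the nonlinearity, after the smoothing $\langle D\rangle^{-1}$, to have spatial regularity \emph{strictly above} $1/2$ with a norm that is integrable in time over $[0,\delta]$ and that closes with a positive power of $\delta$ — all while only assuming $u_{0,\pm}\in H^s$ with $s$ barely above $1/2$. There is some tension here because the cubic product of $H^s$ functions is again in $H^s$ (no gain), so the whole $+1$ of regularity must come from $\langle D\rangle^{-1}$ and from the Bourgain-space gain in the Duhamel operator; one must check that $b>1/2$ can be chosen compatibly with $s>1/2$ so that the trilinear $X^{s,b}$ estimate holds on $\mathbb{T}^3$ (this is where the cubic, as opposed to higher, power and the dimension $3$ matter). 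A secondary technical point is justifying the interchange of $\sup_t$ and $\int_0^t\,d\tau$ and the measurability of the maximal function, which is routine once everything is set in the right function spaces. If the direct maximal bound on $v_\pm$ proved too lossy, the fallback is to prove instead that $v_\pm\in C([0,\delta]; H^{\sigma})$ for some $\sigma>1/2$ — which follows from the same nonlinear estimates — and then apply the Cowling maximal theorem a second time, now to the genuinely-better-than-$H^{1/2}$ function $v_\pm(t,\cdot)-v_\pm(0,\cdot)=v_\pm(t,\cdot)$, together with its norm continuity at $t=0$, to conclude $v_\pm(t,x)\to 0$ a.e.
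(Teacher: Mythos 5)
Your approach is genuinely different from the paper's. The paper compares the full flow $\Phi_t f$ with a frequency-truncated flow $\Phi^N_t f$ (Lemma~\ref{lemma:AdaptMaxEst_NL}), transfers the Cowling maximal estimate to the $X^{s,b}$ norm of the \emph{difference}, and sends $N\to\infty$ with an absorption argument. You instead split $u_\pm = e^{\mp it\langle D\rangle}u_{0,\pm} + v_\pm$, dispatch the linear part immediately by Cowling, and aim to show $\sup_{0<t<\delta}|v_\pm|$ is small. This split is in fact simpler for the deterministic theorem: once one has the $X^{s,b}$-bound $\|v_\pm\|_{X^{s,b}_{\delta,\pm}}\lesssim \delta^{s-\frac12}\Lambda^3$ from Lemma~\ref{lemma:DetFixedPoint} and the local well-posedness, the transference Lemma~\ref{TransferLemma} applied with $Y=L^2_x L^\infty_{(0,\delta)}$ gives $\|\sup_{0<t<\delta}|v_\pm|\|_{L^2}\lesssim \delta^{s-\frac12}\Lambda^3$, and letting $\delta\to 0$ in the Chebyshev inequality yields $\limsup_{t\to 0}|v_\pm(t,x)|=0$ a.e.\ directly, with no need for the truncated-flow comparison. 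The paper's more elaborate scheme is set up so that it also governs the probabilistic Theorem~\ref{MainThm2}; for Theorem~\ref{MainThm1Bis} alone, your route is cleaner.

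That said, as written your proposal rests on several false or non-viable steps, and it only works if you commit to the $X^{s,b}$/Strichartz fallback that you mention but do not develop. Concretely: (i) on $\mathbb{T}^3$ one has $H^s\hookrightarrow L^\infty$ and the algebra property only for $s>3/2$, not $s>1/2$; the cubic product of $H^s$ functions with $s$ just above $1/2$ does \emph{not} land back in $H^s$ (indeed one only gets negative regularity near $-3/2$ from crude Sobolev bounds). (ii) For the same reason, the ``move the sup inside the $\tau$-integral and apply Cowling to $\langle D\rangle^{-1}F(\tau)$'' route fails: $\|F(\tau)\|_{H^{s''-1}}$ with $s''>1/2$ is not controlled by the available a priori regularity of $u_\pm$. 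The power of $\delta$ does not come from $|[0,\delta]|$ but from a H\"older-in-time gain at the level of Strichartz norms, exactly as in Lemma~\ref{lemma:MainInt}--Lemma~\ref{lemma:DetFixedPoint}. (iii) Your ``fallback'' of proving $v_\pm\in C([0,\delta];H^\sigma)$ with $\sigma>1/2$ and then ``applying Cowling a second time'' is not a valid step: Cowling's theorem is a maximal estimate for the free propagator $e^{it\langle D\rangle}$, not for an arbitrary $H^\sigma$-continuous curve, and norm continuity in $H^\sigma$ at $t=0$ does not by itself give a.e.\ pointwise convergence unless $\sigma>3/2$. To make your scheme rigorous you must replace steps (ii)--(iii) with the combination transference lemma plus the $\delta^{s-\frac12}$ Duhamel bound in $X^{s,b}_{\delta,\pm}$, which are precisely Lemmas~\ref{TransferLemma} and \ref{lemma:DetFixedPoint} in the paper.
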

	In the linear setting, the most powerful tool to prove almost everywhere pointwise convergence 
	is the maximal estimate approach. More precisely, one can deduce (this is classic)
	$$
	\lim_{t \to 0} |e^{it\langle D \rangle} f(x) - f(x) | = 0, 
	\quad \mbox{for almost every $x \in \mathbb{T}^3$ and for all $f \in H^s(\mathbb{T}^3)$}
	$$ 
	from the maximal estimate  
	\begin{equation}\label{mfdklsjhgfhjskdngkls}
		\left\| \sup_{0 <  t < \delta}  |e^{it\langle D \rangle} f(\cdot) | \right\|_{L^2(\mathbb{T}^3)}
		\lesssim \| f \|_{H^{s}(\mathbb{T}^3)}, \qquad \delta >0.
	\end{equation}
	Proving such a maximal estimate is more amenable than attacking the pointwise convergence problem 
	directly.  
	The estimate is actually known to be true for $s>1/2$ (see \cite{C831}), setting $\delta = 1$. However, we are allowed to assume that the constant 
	$\delta >0$ in \eqref{mfdklsjhgfhjskdngkls} may depend on $f$.
	
	One would be then tempted to deduce the (nonlinear) pointwise convergence Theorem
	\ref{MainThm1Bis} by an analogous (nonlinear maximal estimate) 
	\begin{equation}\label{eq:MaxEst_NL}
		\left\| \sup_{0 < t < \delta} |u_{\pm}(\cdot, t) | \right\|_{L^2(\mathbb{T}^3)} \lesssim 
		\left\|  u_{0,\pm}  \right\|_{H^s(\mathbb{T}^3)}.
	\end{equation}
	Unfortunately, it is not true that the estimate \eqref{eq:MaxEst_NL} implies the (a.e.) pointwise convergence. 
	However, a rigorous nonlinear analogous 
	of \eqref{mfdklsjhgfhjskdngkls} can be established  and one can use it to 
	deduce \eqref{MainThm1BisEstimate}. This rigorous analogous is given by equation 
	\eqref{equivconvergence} in 
	Lemma \ref{lemma:AdaptMaxEst_NL}. 
	
	For the sake of simplicity, let us explain how to estimate the l.h.s. of \eqref{eq:MaxEst_NL}. The argument that 
	will allow us to prove the (desired) equation 
	\eqref{equivconvergence} is essentially the same.  
	The key idea is to use the fact that the  
	restriction spaces   $X^{s, b}_{\delta,\pm}$ (defined in \eqref{RestrSpaceDef}) are embedded into the maximal space 
	$L^{2}(\mathbb{T}^3; L^{\infty}(0, \delta))$  
	as long as $s >1/2$ and $b >1/2$.   
	Thus, an efficient estimate of the l.h.s. of  \eqref{eq:MaxEst_NL} is achieved using an inequality like
	$$
	\left\| \sup_{0 < t < \delta} |u_{\pm}(\cdot, t) | \right\|_{L^2(\mathbb{T}^3)} \lesssim 
	\left\| u_{\pm} \right\|_{X^{s,b}_{\delta,\pm}}, \qquad s, b > 1/2,
	$$
        and the local well-posedness theory 
	(in the restriction spaces framework). This heuristic is formalized in Section \ref{Sec:Proof1}.
	
	Regarding the probabilistic improvement, the strategy starts with a split between homogeneous (linear flow) and non-homogeneous (Duhamel formula) terms for the solution, in the spirit of \cite{BurqTzvet}. Namely,
	\begin{align}
		&(\partial_t \pm i\langle D \rangle)z_{\pm}^w = 0, \quad z_{\pm}^w(x,0) = u_{0,\pm}^w(x), \label{descomp_linear}\\
		&(\partial_t \pm i\langle D \rangle)v_{\pm} = \pm i \langle D \rangle^{-1} \left(  \frac{z^w + v}{2} \right)^3, \quad v_{\pm}(x,0) = 0,
		\label{descomp_nonlinear}
	\end{align}
	where $v = v_+ + v_-$ and $z^w = z^w_+ + z^w_-$, and where initial datum $(u_{0, +},u_{0, -})$ before randomization is in $H^{\sigma}(\mathbb{T}^3) \times H^{\sigma}(\mathbb{T}^3)$, for some $\sigma > 0$.
	
	Then we perform a direct analysis to show that $\omega$-almost surely the linear part of the flow converges to the initial datum almost everywhere (in fact everywhere and uniformly). This 
	only requires to take advantage of the 
	sub-Gaussian nature of the randomization. 
	Regarding the nonlinear contribution, we will reduce to prove, as before, a suitable rigorous version of
	the informal maximal bound\footnote{Note that $v_{\pm}$ depends on $w$ even if we are not stressing this out in the notation.}
	$$
	\| \sup_{0 < t < \delta_w} \left| v_{\pm}  | \hspace{1mm} \right\|_{L^2(\mathbb{T}^3)} \lesssim 
	\| v_{\pm}  \|_{X^{s,b}_{\delta_w,\pm}} , \qquad s, b >1/2.
	$$	
	Now the important point is that the above inequality requires $s > 1/2$ regularity (in the scale of the restriction spaces), while we are working with initial data that are,
	$w$-almost surely, just in $H^{\sigma}$ and we only assumed $\sigma >0$. 
	We will be able to avoid this issue using (crucially) the probabilistic smoothing of the 
	Duhamel contribution $v_{\pm}$ 
	(more precisely, we need at least 
	$\frac12-\varepsilon$ smoothing, for some $\varepsilon >0$, in order to prove the key 
	equation \eqref{eq:12epsilon-smoothing} below).  
	In order to take advantage of the smoothing effect in our framework we will adapt to the restriction 
	space setting the argument from \cite{BurqTzvet}. 
	This will allow us to work just above the $L^2$ level (recall $\sigma >0$).	
	
	\begin{remark}
	The method used in this paper presumably adapts to higher order nonlinearities. For instance, 
	one can focus on $N(u) = |u|^{\alpha-1}u$, $\alpha \geq 3$. 
	For this nonlinearity the critical scaling for local well-posedness is
	$s_c = \frac{3}{2} - \frac{2}{\alpha-1}$. Assuming 
	that one can prove LPW for $s > s_c$ (that is for instance the case when $\alpha \in [3, 5)$, see 
	Theorem 1.1 of \cite{SX15}), we expect that 
	the techniques developed in this paper would allow to deduce pointwise convergence to the initial data in 
	the deterministic setting in the sub-critical regime. Regarding the super-critical regime, if we consider initial 
	data obtained from 
	the randomization of an $H^{\sigma}$ function, we guess that our method would imply
	almost sure almost everywhere (a.s., a.e.) pointwise convergence 
	to the initial data as long as 
	$$\sigma + \gamma > \max(s_c, 1/2),$$
	where $\gamma$ is the (probabilistic) smoothing of the Duhamel contribution; see Theorem \ref{Prop:BT} 
	and the remark 
	after it. 
	Note that the second condition in the maximum is needed to use the Cowling convergence theorem, while the first 
	one is necessary in order to have probabilistic local well-posedness. Since for $\alpha \geq 3$ one has $s_c \geq 1/2$, we then expect that
	the method that we have developed would show that in this case one can deduce
	a.s., a.e. convergence as long as the problem is locally well-posed in the probabilistic 
	sense. Thus, for instance, combining the techniques of this paper 
	and \cite{SX15} one would be able to extend the analysis to 
	the range $\alpha \in [3, 5)$ (modulo adapting the result from \cite{SX15} to the restriction space 
	framework that we used here).  For larger values of $\alpha$ 
	there is certainly hope to 
	extend the (a.s., a.e.) pointwise convergence result using more advanced tools recently developed in the study of 
	probabilistic PDEs (for the wave equation, we refer to \cite{OP16, OP17, BDNY24}	 and the references therein). 
	Combining these refined 
	tools with the techniques that we developed in this paper is however more delicate and we will not pursue 
	this matter any further here.    
	\end{remark}
	

	\section{The Cowling convergence theorem}\label{Sec:TCT}
	
	We include now a result of M. G. Cowling \cite{C831, C832}. We will state and use a less general form of his theorem, so 
	we also provide a sketch of the proof for the reader's convenience. 
	\begin{theorem}\label{thm:cowling}
		Let $s > 1/2$. Then   
		\begin{equation}\label{cowlingLemma}
			\left\| \sup_{0 < t < 1} |e^{i t \langle D \rangle} f(\cdot) | \right\|_{L^2(\mathbb{T}^d)} \leq C_s
			\left\|  f(\cdot)  \right\|_{H^s(\mathbb{T}^d)} .
		\end{equation}
	\end{theorem}
	\begin{proof}
		Invoking the Littlewood-Paley decomposition, it is sufficient to prove
		$$
		\left\| \sup_{0 < t < 1} |e^{i t \langle D \rangle} f(\cdot) | \right\|_{L^2(\mathbb{T}^d)} \lesssim
		N^{\frac12}\left\|  f(\cdot)  \right\|_{L^2(\mathbb{T}^d)} , \qquad N \geq 1,
		$$
		for all $2 \pi$-periodic function $f$ with $\supp \widehat f \subset [-N, N]^d$. Let $F(t) = e^{it\langle D \rangle} f(x)$ and $\alpha > 0$ be a parameter that will be chosen later. Notice that $\partial_tF(t) = i \langle D \rangle e^{it\langle D \rangle} f(x)$. By fundamental theorem of calculus and Hölder's inequality we obtain that
		\begin{align*}
		&|F^2(t)| = |F^2(0) + \int_0^t \partial_y (F^2(y)) dy| \leq |F(0)|^2 + 2 \int_0^t \left|(\partial_yF(y)) F(y) \right| dy \nonumber \\
		&\leq |F(0)|^2 + 2 \frac{1}{\sqrt{\alpha}} \| \partial_y F \|_{L^2(0,t)} \sqrt{\alpha} \|F\|_{L^2(0,t)} \nonumber \\
		&\leq |F(0)|^2 + \frac{1}{\alpha}\| \partial_y F \|_{L^2(0,t)}^2 + \alpha \|F\|_{L^2(0,t)}^2.
		\end{align*}
		This inequality is satisfied for any $t \in (0,1)$. Thus
		\begin{align*}
		\sup_{0<t<1} |e^{it\langle D \rangle} f(x)|^2 \leq  |f(x)|^2 + 
		\frac{1}{\alpha} \|\langle D \rangle e^{it\langle D \rangle} f(x)\|_{L^2(0,1)}^2 
		+ \alpha \| e^{it\langle D \rangle} f(x) \|_{L^2(0,1)}^2 .
		\end{align*}
		If we integrate over $\mathbb{T}^d$ (namely, with respect to the $x$ variable) and we use Fubini and 
		$$
		\| e^{it\langle D \rangle} f(x)\|_{L^2(\mathbb{T}^d)} = \| f \|_{L^2(\mathbb{T}^d)}, \qquad		
		\|\langle D \rangle e^{it\langle D \rangle} f(x)\|_{L^2(\mathbb{T}^d)} \lesssim N \| f \|_{L^2(\mathbb{T}^d)}, 
		$$
		 we arrive to
		\begin{align*}
		\| \sup_{0<t<1} | e^{it\langle D \rangle} f(\cdot)| \|_{L^2(\mathbb{T}^d)}^2  \lesssim \|f(\cdot)\|_{L^2(\mathbb{T}^d)}^2 + \frac{N^2}{\alpha} \|f(\cdot)\|_{L^2(\mathbb{T}^d)}^2 + \alpha \|f\|_{L^2(\mathbb{T}^d)}^2.
		\end{align*}
		Taking $\alpha = N$ (that equalizes the second and the third term on the right hand side) we have proved
		the desired bound 
		\begin{align*}
		\|\sup_{0<t<1} | e^{it\langle D \rangle} f(\cdot) | \|_{L^2(\mathbb{T}^d)} \lesssim N^{\frac{1}{2}} \|f(\cdot)\|_{L^2(\mathbb{T}^d)}.
		\end{align*}
	\end{proof}

	\section{The Maximal approach to nonlinear pointwise convergence}\label{sec:TMATNPC}
	
	In this section we provide the framework to study the pointwise convergence problem in the nonlinear setting. 
	The goal of this section is to show how the limit \eqref{equivconvergence}
	\begin{itemize}
		\item is sufficient to deduce almost everywhere pointwise convergence to the initial datum;
		\item can be deduced by an appropriate bound in the restriction spaces. 
	\end{itemize}
	
	In Lemma \ref{lemma:AdaptMaxEst_NL} we will see that \eqref{equivconvergence} is the 
	rigorous counterpart of the informal maximal estimate \eqref{eq:MaxEst_NL}. 
	
	Let $u_0=(u_{0,+},u_{0,-}) \in H^s(\mathbb{T}^3) \times H^s(\mathbb{T}^3)$. We denote with  $\Phi^N_t u_0 = (\Phi^{N,+}_t u_{0,+},\Phi^{N,-}_t u_{0,-})$ the flow $\Phi^N_t$ applied on $u_0$, and associated to the (frequency) truncated Cauchy problem 
	\begin{equation}\label{nlwdecTruncated}
		\left\{
		\begin{array}{l}		 
			( \partial_t \pm i \langle D \rangle ) u_{\pm} = \pm i  P_{\leq N} 
			\langle D \rangle^{-1}\left( \left( \frac{u_+ + u_-}{2} \right)^3 \right), 
			\\
			u_{\pm}(x,0) = P_{\leq N} u_{0,\pm}(x).
		\end{array}
		\right.
	\end{equation}
	We also use the notation 
	$\Phi_t u_0  := \Phi^{\infty}_t u_0$ for the actual (non truncated) flow 
	$\Phi_t$ applied on $u_0$. We will now show how to deduce pointwise convergence from a suitable nonlinear maximal estimate.
	\begin{lemma}\label{lemma:AdaptMaxEst_NL}
		Let $s \geq 0$, $f=(f_+, f_-) \in  H^{s}(\mathbb{T}^3) \times H^{s}(\mathbb{T}^3)$.  If
		for some $\delta  > 0$ we have
		\begin{align}
			\lim_{N \rightarrow \infty} 
			\left\| \sup_{0 < t < \delta} 
			| \Phi_t f   - \Phi^N_t f  | \right\|_{L^2_x(\mathbb{T}^3)\times L^2_x(\mathbb{T}^3)} = 0,
			\label{equivconvergence}
		\end{align}
		then $\Phi_t^{\pm} f_{\pm} \overset{t \to 0}{\to} f_{\pm}$ for almost every $x \in \mathbb{T}^3$.
	\end{lemma}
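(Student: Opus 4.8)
The plan is to reduce the almost-everywhere convergence statement to a Borel–Cantelli-type argument fed by the quantitative hypothesis \eqref{equivconvergence}, following the standard template by which maximal estimates imply pointwise convergence. First I would observe that for each fixed truncation level $N$ the truncated flow $\Phi^N_t f$ is smooth in $t$ — the right-hand side of \eqref{nlwdecTruncated} is a finite-frequency trigonometric polynomial, so $\Phi^{N,\pm}_t P_{\leq N} f_\pm$ is (at least) continuous in $t$ with values in $L^2_x$, and hence $\Phi^{N,\pm}_t f_\pm \to P_{\leq N} f_\pm$ in $L^2_x$, and in fact uniformly in $x$, as $t \to 0$. (One can even say it converges everywhere pointwise for the truncated problem.) Thus the truncated flows are harmless: the only obstruction to pointwise convergence of $\Phi_t f$ is controlling the difference $\Phi_t f - \Phi^N_t f$ uniformly in small $t$, which is exactly what \eqref{equivconvergence} provides in $L^2_x$.

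Next I would extract from \eqref{equivconvergence} a subsequence $N_k \to \infty$ such that
\[
\left\| \sup_{0 < t < \delta} |\Phi_t f - \Phi^{N_k}_t f| \right\|_{L^2_x(\mathbb{T}^3) \times L^2_x(\mathbb{T}^3)} \le 2^{-k}.
\]
Introduce the maximal-difference function $G_k(x) := \sup_{0<t<\delta} |\Phi_t f(x) - \Phi^{N_k}_t f(x)|$ (componentwise), so $\|G_k\|_{L^2 \times L^2} \le 2^{-k}$. By Chebyshev, $|\{G_k > 2^{-k/2}\}| \le 2^{-k}$, and since $\sum_k 2^{-k} < \infty$, Borel–Cantelli gives a full-measure set of $x$ at which $G_k(x) \to 0$; equivalently, $\sup_{0<t<\delta}|\Phi_t f(x) - \Phi^{N_k}_t f(x)| \to 0$ for a.e. $x$. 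Also $\|P_{\leq N_k} f_\pm - f_\pm\|_{L^2} \to 0$, so (passing to a further subsequence if needed, or absorbing it into the same Borel–Cantelli step) $P_{\leq N_k} f_\pm(x) \to f_\pm(x)$ for a.e. $x$. Fix such a good point $x$. Then for any $\eta>0$ choose $k$ with $\sup_{0<t<\delta}|\Phi_t f(x) - \Phi^{N_k}_t f(x)| < \eta$ and $|P_{\leq N_k} f_\pm(x) - f_\pm(x)| < \eta$; by smoothness of the truncated flow there is $t_0 = t_0(k,x)>0$ with $|\Phi^{N_k}_t f(x) - P_{\leq N_k} f(x)| < \eta$ for $0<t<t_0$, and the triangle inequality yields $|\Phi_t f(x) - f(x)| < 3\eta$ for $0<t<\min(t_0,\delta)$, i.e. $\Phi^\pm_t f_\pm(x) \to f_\pm(x)$.

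The one point requiring a little care — and the step I expect to be the main (minor) obstacle — is justifying that $\Phi^{N,\pm}_t f_\pm \to P_{\leq N} f_\pm$ pointwise (in $x$) as $t \to 0$ for fixed $N$, since a priori the truncated flow is only known to exist in $L^2_x$ on a short interval $[0,\delta_N]$. This follows because the Duhamel term $\pm i\int_0^t \frac{e^{\mp i(t-\tau)\langle D\rangle}}{\langle D\rangle} P_{\leq N}\big(\big(\frac{u_+ + u_-}{2}\big)^3\big)\,d\tau$ has spatial frequencies confined to $[-N,N]^3$ and a locally bounded integrand in $L^2_x$, hence tends to $0$ in $L^\infty_x$ as $t \to 0$ by Bernstein, while $e^{\mp i t\langle D\rangle} P_{\leq N} f_\pm \to P_{\leq N} f_\pm$ in, say, $L^2_x$ and (again by Bernstein on the fixed frequency ball) in $L^\infty_x$. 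I would also make sure $\delta$ in \eqref{equivconvergence} is taken small enough that the untruncated flow $\Phi_t f$ is itself defined on $[0,\delta]$ in the relevant space; but this is guaranteed by the local well-posedness theory invoked elsewhere in the paper. With these observations the argument above is complete.
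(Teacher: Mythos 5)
Your proof is correct and takes essentially the same approach as the paper: a triangle-inequality decomposition through the truncated flow $\Phi^{N}_t$ (which converges to $P_{\leq N}f$ uniformly in $x$ for fixed $N$), followed by a Chebyshev-type argument on the maximal difference $\sup_{0<t<\delta}|\Phi_t f - \Phi^N_t f|$. The only cosmetic difference is that you pass to a summable subsequence $N_k$ and invoke Borel--Cantelli, whereas the paper applies Markov's inequality directly to the set $\{x:\limsup_{t\to 0}|\Phi^{\pm}_t f_{\pm}-f_{\pm}|>\lambda\}$, lets $N\to\infty$ for each fixed $\lambda$, and then takes a union over $\lambda=1/n$ --- an interchangeable mechanism.
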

	\begin{proof}
		It suffices to prove it componentwise. First of all, we decompose 
		\begin{align*}
			|\Phi^{\pm}_t f_{\pm} - f_{\pm} | \leq |P_{>N} f_{\pm}| + 
			| \Phi_t^{N,\pm} f_{\pm} - P_{\leq N} f_{\pm}| + |\Phi^{N,\pm}_t f_{\pm}- \Phi^{\pm}_t f_{\pm}|.
		\end{align*}
		The second term in the RHS converges to $0$ for any $x \in \mathbb{T}^3$, since $P_{\leq N} f$ belongs to $H^r(\mathbb{T}^3)$ for any $r$, from which it is easy to prove that $\Phi_t^{N,\pm} f_{\pm} \rightarrow P_{\leq N} f_{\pm}$ as $t \downarrow 0$ uniformly in $x \in \mathbb{T}^3$. Indeed,
\begin{itemize}
			\item If $r > \frac{1}{2}$, $\Phi_t^{N,\pm}$ is a bounded map from $H^r(\mathbb{T}^3)$ to $C([0,\delta],H^r(\mathbb{T}^3))$ for $\delta$ sufficiently small (see \cite{GST19}).
			\item Consider $r > \frac{3}{2}$. Then, by Sobolev lemma,
			\begin{align*}
				\| \Phi_t^{N,\pm}f - P_{\leq N}f \|_{L^{\infty}_x(\mathbb{T}^3)} \lesssim \| \Phi_t^{N,\pm}f - P_{\leq N}f \|_{H^r(\mathbb{T}^3)} \rightarrow 0, \text{ for } t \rightarrow 0.
			\end{align*}
		\end{itemize}
		Then, applying triangle and Markov inequalities, for any $\lambda > 0$,
		\begin{align*}
			|\{ x & \in \mathbb{T}^3 : \limsup_{t \rightarrow 0} |\Phi^{\pm}_t f_{\pm} - f_{\pm}| > \lambda \}\}|  \\
			& \leq |\{ x \in \mathbb{T}^3 : \sup_{0 < t < \delta } 
			|\Phi^{N,\pm}_t f_{\pm} - \Phi^{\pm}_t f_{\pm}| > \frac{\lambda}{3} \}\}|  
			+ |\{ x \in \mathbb{T}^3 : |P_{>N} f_{\pm}| > \frac{\lambda}{3} \}\}|  \\ 
			& \lesssim  \frac{1}{\lambda^2} \left( \| \sup_{0 < t < \delta } 
			|\Phi^{N,\pm}_t f_{\pm} - \Phi^{\pm}_t f_{\pm}| \|^2_{L^2(\mathbb{T}^3)} 
			+ \| P_{>N}f_{\pm} \|^2_{L^2(\mathbb{T}^3)} \right),
		\end{align*}
		where $|\cdot|$ denotes the Lebesgue measure. 
		We can now take the limit as $N \to \infty$ of this inequality getting, for all $\lambda >0$,
		$$
		|\{ x  \in \mathbb{T}^3 : \limsup_{t \rightarrow 0} |\Phi^{\pm}_t f_{\pm} - f_{\pm}| > \lambda \}\}| = 0.
		$$
		Choosing $\lambda = 1/n$ and considering the measure of the  union over $n \in \mathbb{N}$ of the 
		above sets the statement follows. 
	\end{proof}
	Rather than proving \eqref{equivconvergence} directly, we will use the fact that a  
	suitable restriction space is embedded into  the maximal space. This is proved via the usual  transference
	principle, which proof can be adapted from the one in~\cite{tao2006nonlinear} (see Lemma 2.9, page 100).
	\begin{lemma}\label{TransferLemma}
		Let $b > \frac{1}{2}$ and let $Y$ be a Banach space of functions 
		$F : (t,x) \in \mathbb{R} \times \mathbb{T}^3 \mapsto \mathbb{C}$. 
		Let $\alpha \in \mathbb{R}$. Assume
		\begin{align*}
			\| e^{i \alpha t} e^{\pm i t \langle D \rangle} f(x) \|_Y \leq C \| f \|_{H^s(\mathbb{T}^3)},
		\end{align*}
		for some constant $C > 0$ independent on $\alpha \in \mathbb{R}$. Then, for any $F \in Y$,
		\begin{align*}
			\|F\|_Y \leq C \|F\|_{X^{s,b}_{\mp}}.
		\end{align*}
		\label{lemmaembed}
	\end{lemma}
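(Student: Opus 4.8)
The plan is to follow the classical transference principle from \cite{tao2006nonlinear}, adapting it to the half-wave propagator $e^{\pm i t\langle D\rangle}$ together with the harmless modulation factor $e^{i\alpha t}$. First I would recall the characterization \eqref{RestrictionSobolevChar}, namely $\|F\|_{X^{s,b}_{\mp}} = \|e^{\mp i t\langle D\rangle}F\|_{H^s_x H^b_t}$, and set $G(t,x) := e^{\mp i t\langle D\rangle}F(t,x)$, so that $G \in H^s_x H^b_t$ and $F(t,x) = e^{\pm i t\langle D\rangle}G(t,x)$. Taking the Fourier transform only in the time variable, write $G(t,x) = \int_{\mathbb{R}} e^{i\alpha t}\,\widehat{G}(\alpha,x)\,d\alpha$, where $\widehat{G}(\alpha,x)$ denotes the partial Fourier transform in $t$. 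Then
\begin{align*}
	F(t,x) = e^{\pm i t\langle D\rangle}G(t,x) = \int_{\mathbb{R}} e^{i\alpha t}\, e^{\pm i t\langle D\rangle}\widehat{G}(\alpha,x)\, d\alpha,
\end{align*}
so that by the triangle inequality in $Y$ (Minkowski's integral inequality) and the hypothesis applied to the function $x \mapsto \widehat{G}(\alpha,x)$,
\begin{align*}
	\|F\|_Y \leq \int_{\mathbb{R}} \left\| e^{i\alpha t}\, e^{\pm i t\langle D\rangle}\widehat{G}(\alpha,\cdot)\right\|_Y d\alpha \leq C \int_{\mathbb{R}} \left\| \widehat{G}(\alpha,\cdot)\right\|_{H^s(\mathbb{T}^3)} d\alpha.
\end{align*}

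Next I would absorb the $\alpha$-integral using Cauchy--Schwarz together with the assumption $b > 1/2$, which is exactly what makes $\langle\alpha\rangle^{-b} \in L^2_\alpha(\mathbb{R})$. Concretely, insert a weight:
\begin{align*}
	\int_{\mathbb{R}} \left\| \widehat{G}(\alpha,\cdot)\right\|_{H^s(\mathbb{T}^3)} d\alpha = \int_{\mathbb{R}} \langle\alpha\rangle^{-b}\, \langle\alpha\rangle^{b}\left\| \widehat{G}(\alpha,\cdot)\right\|_{H^s(\mathbb{T}^3)} d\alpha \leq \left\| \langle\alpha\rangle^{-b}\right\|_{L^2_\alpha} \left( \int_{\mathbb{R}} \langle\alpha\rangle^{2b}\left\| \widehat{G}(\alpha,\cdot)\right\|^2_{H^s(\mathbb{T}^3)} d\alpha\right)^{1/2}.
\end{align*}
The first factor is a finite constant $c_b$ precisely because $2b > 1$; the second factor, after expanding the $H^s_x$ norm in Fourier series in $x$ and using Plancherel in $t$, equals $\|G\|_{H^s_x H^b_t} = \|F\|_{X^{s,b}_{\mp}}$. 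Collecting the constants gives $\|F\|_Y \leq C c_b \|F\|_{X^{s,b}_{\mp}}$, which is the claim up to relabeling the constant (one may also absorb $c_b$ by the conventions, but it is cleaner to just allow the final constant to depend on $b$).

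I expect the main technical point — rather than a genuine obstacle — to be the careful justification of the interchange of $e^{\pm i t\langle D\rangle}$ with the $\alpha$-integral and of the Minkowski inequality in the abstract Banach space $Y$: one should first run the argument for $F$ in a dense class (say with $\widehat G$ smooth and compactly supported in $\alpha$, Schwartz in $x$) so that all the integrals converge absolutely and $Y$-valued Bochner integration is unproblematic, and then pass to the limit using the density of such $F$ in $X^{s,b}_{\mp}$ together with the bound just obtained. A secondary point is that the hypothesis is stated with the fixed sign $e^{\pm i t\langle D\rangle}$ and a fixed $s$, and one must be consistent about which sign of the restriction space is paired with which sign of the propagator; tracing through the definition \eqref{RestrictionSobolevChar}, the propagator $e^{+it\langle D\rangle}$ corresponds to $X^{s,b}_{-}$ and vice versa, which accounts for the $\mp$ versus $\pm$ in the statement. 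No harmonic analysis beyond Plancherel and Cauchy--Schwarz is needed; the content is entirely in the weight $\langle\alpha\rangle^{b}$ and the condition $b>1/2$.
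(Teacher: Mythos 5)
Your proposal is the standard transference argument (Fourier-expand in time, apply the uniform-in-$\alpha$ hypothesis, then Cauchy--Schwarz against the weight $\langle\alpha\rangle^{-b}$ with $b>1/2$, and finish with Plancherel), which is exactly the proof from \cite{tao2006nonlinear} that the paper cites without reproducing; the sign bookkeeping via \eqref{RestrictionSobolevChar} is handled correctly. The only deviation from the literal statement is the extra factor $c_b = \|\langle\alpha\rangle^{-b}\|_{L^2_\alpha}$ in the final constant, which is harmless since $b$ is fixed throughout the paper and is implicitly absorbed into the claimed $C$.
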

	
	Recalling the linear maximal estimate from \eqref{cowlingLemma}, valid for $s>1/2$, 
	we can use Lemma \ref{TransferLemma} with $Y = L^{2}(\mathbb{T}^3; L^{\infty}(0, \delta))$, 
	so that we deduce 
	\begin{equation}
		\left\| \sup_{0 \leq t \leq \delta} | \Phi^{\pm}_t f_{\pm}  - \Phi^{N,\pm}_t f_{\pm} | \right\|_{L^2_x(\mathbb{T}^3)}  \leq C_s 
		\|  \Phi^{\pm}_t f_{\pm}  - \Phi^{N,\pm}_t f_{\pm} \|_{X^{s,b}_{\delta,\pm}}, \quad (b, s > 1/2, \, \delta >0).
		\label{eq:MaxEstXsb}
	\end{equation}
	Note that, doing so, we have reduced the problem of pointwise convergence to prove a suitable inequality in 
	restriction spaces, that is something definitely more amenable in the nonlinear setting.
	\\

	\section{Deterministic preliminaries}
	\label{Sec:DetPre}	
	
	In this section we collect and prove some results on the local well-posedness of \eqref{nlwdec} in the 
	framework of the restriction spaces \eqref{RestrSpaceDef}.
	These results will be used in Section \ref{Sec:Proof1} in order to prove our first main Theorem \ref{MainThm1}. 
	
	First, we recall the (local in time version of the) Strichartz estimate \cite{GST19}. Recall that we denote $L^p_{\delta}L^q = L^p_t([0,{\delta}],L^q_x(\mathbb{T}^3))$.
	\begin{lemma}\label{Strichartz_plow}
		Let $2 < p \leq \infty$ and $q$ such that $\frac{1}{p} + \frac{1}{q} = \frac{1}{2}$, $f \in H^{2/p}(\mathbb{T}^3)$. Then,
		\begin{align}
			\| e^{\pm i t \langle D \rangle}f \|_{L^p_1L^q}  \leq C \|f\|_{H^{2/p}(\mathbb{T}^3)}.
		\end{align}
	\end{lemma}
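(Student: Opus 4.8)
The plan is to deduce the periodic Strichartz estimate \eqref{Strichartz_plow} from the corresponding estimate on $\mathbb{R}\times\mathbb{T}^3$ (or equivalently from the scale-invariant $\mathbb{R}^4$ estimate after a parabolic/hyperbolic rescaling) by the standard localization argument. First I would reduce to frequency-localized data: by the Littlewood-Paley decomposition and Minkowski's inequality (using $p,q\ge 2$), it suffices to establish
\begin{align*}
\| e^{\pm i t \langle D \rangle} P_N f \|_{L^p_1 L^q} \lesssim N^{2/p} \| P_N f \|_{L^2(\mathbb{T}^3)}
\end{align*}
with a constant uniform in the dyadic parameter $N$, and then sum in $N$ using $s=2/p>0$ together with the square-function estimate. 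For fixed $N$ I would invoke the global-in-time (or, what is enough here, the unit-time) Strichartz estimate for the Klein-Gordon propagator $e^{\pm it\langle D\rangle}$ at frequency $\sim N$, which on the torus follows from the work of \cite{GST19}; the gain $N^{2/p}$ is exactly the derivative loss coming from the $\mathbb{R}^4$ wave/Klein-Gordon Strichartz exponents restricted to the admissible line $\tfrac1p+\tfrac1q=\tfrac12$ with $2<p\le\infty$.

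The key steps, in order, would be: (i) perform the dyadic decomposition and reduce to a single frequency block; (ii) on the block $|\xi|\sim N$, replace $\langle\xi\rangle$ by a smooth profile and rescale $\xi\mapsto N\xi$, $t\mapsto Nt$, $x\mapsto x/N$ so the problem becomes an estimate on $[0,N]\times N\mathbb{T}^3$ for a unit-frequency solution, at which point finite speed of propagation and almost orthogonality let one compare with the Euclidean estimate on $\mathbb{R}\times\mathbb{R}^3$; (iii) invoke the classical $\mathbb{R}^{1+3}$ wave Strichartz inequality for $L^p_tL^q_x$ with the stated admissibility, absorbing the low-frequency/mass term $\langle\xi\rangle\approx\langle N\rangle$ into the constant (for $N\gtrsim 1$ this is harmless, and the finitely many $N\lesssim 1$ blocks are trivially controlled by $L^2$); (iv) undo the rescaling to recover the factor $N^{2/p}$, and (v) reassemble via Littlewood-Paley. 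Alternatively, one can quote the periodic Strichartz estimate of \cite{GST19} in the form already localized in frequency and skip (ii)-(iii) entirely — since the lemma is attributed to that paper, this is presumably the intended route, and the proof reduces to the summation argument in (i) and (v).

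The main obstacle is making the passage from the torus to $\mathbb{R}^3$ in step (ii) genuinely rigorous: the rescaled time interval $[0,N]$ is long, and naive use of the fixed-time dispersive decay would lose a logarithm or worse, so one must either use finite speed of propagation to partition $N\mathbb{T}^3$ into unit cubes and sum almost-orthogonally, or appeal directly to the already-proved global periodic Strichartz estimate of \cite{GST19} rather than re-deriving it. Given that the statement explicitly cites \cite{GST19}, I expect the "proof" here to be essentially a citation combined with the routine Littlewood-Paley summation, and I would present it that way: state that \eqref{Strichartz_plow} is the local-in-time specialization of the global estimate in \cite{GST19}, noting only that restriction to $[0,1]$ and the derivative count $2/p$ follow from the admissibility relation $\tfrac1p+\tfrac1q=\tfrac12$ together with $p>2$, which is exactly the borderline of the Keel–Tao range for the wave equation in dimension $3$.
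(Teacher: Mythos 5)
You correctly anticipated the situation: the paper does not prove this lemma at all — it is stated as a recalled fact with a pointer to \cite{GST19} and no proof is supplied. Your closing paragraph, which predicts that ``the proof here is essentially a citation,'' is exactly right, so there is no discrepancy with the paper to flag.

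Your alternative from-scratch sketch is a reasonable route, and your identification of the main obstacle is accurate: after rescaling $\xi \mapsto N\xi$, $t \mapsto Nt$, $x \mapsto x/N$, the time interval inflates to $[0,N]$, so the \emph{unit-time} Euclidean estimate you mention parenthetically is not in fact sufficient — one must invoke the global-in-time $L^p_t L^q_x$ Strichartz bound on $\mathbb{R}^{1+3}$ (available for the full sharp-admissible line $\tfrac1p+\tfrac1q=\tfrac12$ with $p>2$ since this stays away from the forbidden $(2,\infty)$ endpoint), and then finite speed of propagation plus almost-orthogonality across unit cubes of $N\mathbb{T}^3$ is the standard way to transfer that to the torus. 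The derivative count is also right: on the line $\tfrac1p+\tfrac1q=\tfrac12$ in $d=3$, the wave loss $s=\tfrac32-\tfrac3q-\tfrac1p$ collapses to $s=\tfrac2p$, matching the paper's convention that $(p,q)$ with $\tfrac1p+\tfrac3q=\tfrac32-s$ is $s$-admissible. None of this machinery needs to be spelled out given that the paper itself leans on \cite{GST19}, but as a check of consistency your account is sound.
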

	Let $\delta \in (0,1)$. Following \cite{BurqTzvet}, we consider the generalized Strichartz spaces
	\begin{align}
		X^s_{\delta} =C([0,\delta],H^s(\mathbb{T}^3)) \cap \left( \bigcap_{(p,q) \text{ $s$-admissible}} L^p_{\delta}L^q \right)
		\label{StrichartzSpace}
	\end{align}
	for $s \in [0,1)$, and where $(p,q)$, $\frac2s \leq p \leq \infty$, is an $s$-admissible pair of exponents if and only if
	\begin{align*}
		\frac{1}{p} + \frac{3}{q}  \geq \frac{3}{2} - s.
	\end{align*}
	The norm is taken as the supremum of all norms involved in the intersection \eqref{StrichartzSpace}. The dual space of $X^s_{\delta}$ will be denoted by $Y^s_{\delta}$, and given by
	\begin{align}
		Y^s_{\delta} = \bigcup_{(p,q) \text{ } s- \text{admissible}} L^{p'}_{\delta}L^{q'}.
		\label{StrichartzSpaceDual}
	\end{align}
	Analogously to \eqref{StrichartzSpace}, its norm is taken as the infimum of all the norms involved in the union \eqref{StrichartzSpaceDual}.
	\\
	From Lemma \ref{Strichartz_plow}, Sobolev embedding and $H^s$ conservation we deduce (check \cite{GST19} for details)
	\begin{align}
		\| e^{\pm i t \langle D \rangle}f \|_{X^s_{\delta}}  \lesssim \|f\|_{H^s(\mathbb{T}^3)}, \qquad s \in (0,1).
		\label{LinearFlowStrichartz_slow}
	\end{align}
	Thus the transference Lemma \ref{lemmaembed} implies 
	\begin{align}
		\|F\|_{X^s_{\delta}} \lesssim \|F\|_{X^{s,b}_{\pm}},
		\label{EstimateStrichartzRestriction}
	\end{align}
	for any $F : \mathbb{R} \times \mathbb{T}^3 \mapsto \mathbb{C}$ in $X^{s,b}$, $\delta \in (0,1)$, $s \in (0,1)$ and $b > \frac{1}{2}$. 
	By duality and recalling \eqref{StrichartzSpaceDual}, this is equivalent to
	\begin{align}
		\|F\|_{X^{-s,-b}_{\pm}} \lesssim \|F\|_{Y^s_{\delta}},
		\label{EstimateStrichartzRestrictionDual}
	\end{align}
	for the same values of $\delta$, $s$ and $b$. It is clear that the same inequalities can be stated with the localized spaces $X^{s,b}_{\delta, \pm}$ and $X^{-s,-b}_{\delta, \pm}$.

	Hereafter $\eta$ will be a smooth, compactly supported, non negative cut-off of $[0,1]$. 
	We recollect some well known facts about restriction spaces (see \cite{ETBook}, Lemmata 3.10 and 3.12).
	\begin{lemma}
		\begin{align*}
			\|\eta(t)e^{\pm i t \langle D \rangle}f\|_{X^{s,b}_{\mp}} \lesssim_{\eta} \|f\|_{H^s(\mathbb{T}^3)}.
		\end{align*}
		\label{EstLinearFlowRestriction}
	\end{lemma}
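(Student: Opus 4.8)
The plan is to reduce the estimate to the Sobolev-characterization \eqref{RestrictionSobolevChar} of the restriction norm and then exploit the fact that the cut-off $\eta$ localizes time. Recall that
$$
\|\eta(t)e^{\pm i t \langle D \rangle}f\|_{X^{s,b}_{\mp}} = \| e^{\mp i t \langle D \rangle}\big(\eta(t)e^{\pm i t \langle D \rangle}f\big)\|_{H^s_x H^b_t} = \| \eta(t) f(x)\|_{H^s_x H^b_t},
$$
where in the last step the two half-wave propagators cancel because $e^{\mp i t \langle D \rangle}$ acts only in $x$ while $\eta(t)$ depends only on $t$, so they commute and multiply to the identity. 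Hence the problem is purely a product estimate: one must bound $\|\eta(t)f(x)\|_{H^s_x H^b_t}$ by $\|f\|_{H^s(\mathbb{T}^3)}$, with a constant depending on $\eta$.

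The key steps, in order, are as follows. First, apply \eqref{RestrictionSobolevChar} to rewrite the left-hand side as $\|\eta(t)f(x)\|_{H^s_x(\mathbb{T}^3)H^b_t(\mathbb{R})}$, as above. Second, separate variables: since $\langle D\rangle^s$ acts only on the $x$-variable and commutes with multiplication by $\eta(t)$, we have $\langle D\rangle_x^s(\eta(t)f(x)) = \eta(t)(\langle D\rangle^s f)(x)$, so it suffices to prove the scalar inequality $\|\eta(t) g(x)\|_{L^2_x H^b_t} \lesssim_\eta \|g\|_{L^2_x}$ for $g = \langle D\rangle^s f$, and then the general case follows by replacing $g$ by $\langle D\rangle^s f$ and noting $\|\langle D\rangle^s f\|_{L^2_x} = \|f\|_{H^s_x}$. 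Third, for the scalar inequality use Fubini to write
$$
\|\eta(t) g(x)\|_{L^2_x H^b_t}^2 = \int_{\mathbb{T}^3} |g(x)|^2\, dx \cdot \|\eta\|_{H^b_t(\mathbb{R})}^2,
$$
since $\|\eta(t)g(x)\|_{H^b_t}^2 = |g(x)|^2 \|\eta\|_{H^b_t}^2$ pointwise in $x$. Finally, observe that $\|\eta\|_{H^b_t(\mathbb{R})} < \infty$ because $\eta$ is smooth and compactly supported, and this finite quantity is precisely the $\eta$-dependent constant; taking square roots gives the claim.

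The only point that requires a word of care — and the closest thing to an obstacle — is the interchange of the $x$- and $t$-norms in the third step: one is asserting that the mixed norm $\|\cdot\|_{H^s_x H^b_t}$ factors on a pure product $\eta(t)g(x)$. This is immediate once one writes everything on the Fourier side: $\widetilde{\eta g}(\tau,n) = \widehat\eta(\tau)\widehat g(n)$, so
$$
\|\eta(t)g(x)\|_{H^s_x H^b_t}^2 = \sum_{n} \langle n\rangle^{2s} |\widehat g(n)|^2 \int_{\mathbb{R}} \langle \tau\rangle^{2b} |\widehat\eta(\tau)|^2\, d\tau = \|g\|_{H^s_x}^2\, \|\eta\|_{H^b_t}^2,
$$
and the finiteness of the $\tau$-integral is exactly the statement $\eta \in H^b(\mathbb{R})$, guaranteed by $\eta \in C_c^\infty$. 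No frequency decomposition, no transference, and no restriction-space subtleties beyond \eqref{RestrictionSobolevChar} are needed; the estimate is genuinely elementary once the Sobolev characterization is invoked.
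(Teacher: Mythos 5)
Your proof is correct and is essentially the standard argument (the paper itself offers no proof, merely a citation to Erdo\u{g}an--Tzirakis): using the Sobolev characterization of the $X^{s,b}_{\mp}$ norm, the free propagator cancels, the space-time Fourier transform of the product $\eta(t)f(x)$ factors, and the constant is $\|\eta\|_{H^b(\mathbb{R})}$, which is finite since $\eta\in C^\infty_c$. One minor remark: the paper's equation \eqref{RestrictionSobolevChar} is stated with the $\delta$-restricted norm on the left, which as written cannot be a literal equality; the clean identity $\|F\|_{X^{s,b}_{\pm}}=\|e^{\pm i t\langle D\rangle}F\|_{H^s_xH^b_t}$ that you actually use is the correct one for the unrestricted space and follows directly from the definition by the change of variables $\tau\mapsto\tau\mp\langle n\rangle$, so your invocation of it is justified.
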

	\begin{lemma}
		Let $b > \frac{1}{2}$ and $\delta \in (0,1)$. 
		\begin{align*}
			\|\eta(t) \int_0^t \frac{e^{\pm i (t - \tau) \langle D \rangle}}{\langle D \rangle} (F(\tau)) d\tau \|_{X^{s,b}_{\delta, \mp}} \lesssim \|F\|_{X^{s-1,b-1}_{\delta, \mp}}.
		\end{align*}
		\label{EstDuhamelCubic}
	\end{lemma}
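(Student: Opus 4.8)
I prove the localized version labelled \eqref{EstDuhamelCubic} by reducing it, in a few routine steps, to a one–dimensional estimate in the time variable which is classical and in which the hypothesis $b>\tfrac12$ enters sharply. I treat one of the two sign choices, the other being identical. \emph{Step 1: absorb $\langle D\rangle^{-1}$ and delocalize.} Since $\langle D\rangle^{-1}$ acts only on $x$, it commutes with $e^{\pm i(t-\tau)\langle D\rangle}$ and with $\int_0^t(\cdot)\,d\tau$, and $\|\langle D\rangle^{-1}H\|_{X^{\sigma,\beta}_{\delta,\mp}}=\|H\|_{X^{\sigma-1,\beta}_{\delta,\mp}}$ for all $\sigma,\beta$; hence, putting $G:=\langle D\rangle^{-1}F$, it suffices to bound $\|\eta(t)\int_0^t e^{\pm i(t-\tau)\langle D\rangle}G(\tau)\,d\tau\|_{X^{s,b}_{\delta,\mp}}$ by $\|G\|_{X^{s,b-1}_{\delta,\mp}}$. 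Now pick an extension $\widetilde G$ of $G$ to $\mathbb R\times\mathbb T^3$ with $\|\widetilde G\|_{X^{s,b-1}_{\mp}}\le 2\|G\|_{X^{s,b-1}_{\delta,\mp}}$. For $t\in[0,\delta]$ the Duhamel integral only samples $\tau\in[0,t]\subseteq[0,\delta]$, where $\widetilde G=G$, so $\eta(t)\int_0^t e^{\pm i(t-\tau)\langle D\rangle}\widetilde G(\tau)\,d\tau$ (now globally defined in $t$) is an admissible extension of the function whose $X^{s,b}_{\delta,\mp}$ norm we need; thus it is enough to prove the global estimate with $X^{s,b}_{\mp}$, $X^{s,b-1}_{\mp}$ and $\widetilde G$ in place of the localized norms and of $G$.

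\emph{Step 2: conjugate and separate variables.} By the Sobolev characterization \eqref{RestrictionSobolevChar}, conjugating by the propagator turns $e^{\pm i(t-\tau)\langle D\rangle}\widetilde G(\tau)$ into $H(\tau):=e^{\mp i\tau\langle D\rangle}\widetilde G(\tau)$, so the left side equals $\big\|\eta(t)\int_0^t H(\tau)\,d\tau\big\|_{H^s_x H^b_t(\mathbb R)}$, while $\|\widetilde G\|_{X^{s,b-1}_{\mp}}=\|H\|_{H^s_x H^{b-1}_t(\mathbb R)}$. Applying $\langle D\rangle^s$ in $x$, which commutes with $\eta(t)\int_0^t(\cdot)\,d\tau$, and using Fubini to pull $L^2_x$ outside, the claim follows from the scalar (in $t$) inequality
\begin{align*}
\Big\|\eta(t)\int_0^t g(\tau)\,d\tau\Big\|_{H^b_t(\mathbb R)}\lesssim_{\eta}\ \|g\|_{H^{b-1}_t(\mathbb R)},\qquad b>\tfrac12,
\end{align*}
applied for a.e. fixed $x$ to $g=\langle D\rangle^s H(\cdot,x)$ and then integrated in $x$.

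\emph{Step 3: the scalar estimate.} Writing $\int_0^t g(\tau)\,d\tau=\int_{\mathbb R}\frac{e^{it\xi}-1}{i\xi}\,\widehat g(\xi)\,d\xi$, split the frequency integral into $|\xi|\le1$ and $|\xi|>1$. On $|\xi|\le1$ one uses $\frac{e^{it\xi}-1}{i\xi}=t\int_0^1 e^{i\theta t\xi}\,d\theta$, so the singularity disappears and, after taking the time Fourier transform of $\eta(t)\,t\,e^{i\theta t\xi}$ (Schwartz, uniformly in $|\xi|,|\theta|\le1$), the contribution is controlled by $\int_{|\xi|\le1}|\widehat g(\xi)|\,d\xi\lesssim\|g\|_{H^{b-1}}$. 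On $|\xi|>1$ one separates $\frac{e^{it\xi}}{i\xi}$ from $\frac{1}{i\xi}$. For the first piece, $\widehat{\eta(\cdot)e^{i\cdot\xi}}(\sigma)=\widehat\eta(\sigma-\xi)$, and using $\langle\sigma\rangle^{b}\lesssim\langle\sigma-\xi\rangle^{b}\langle\xi\rangle^{b}$ together with $\frac{\langle\xi\rangle^{b}}{|\xi|}|\widehat g(\xi)|\lesssim\langle\xi\rangle^{b-1}|\widehat g(\xi)|$ for $|\xi|>1$, one is left with a convolution of $\langle\cdot\rangle^{b}\widehat\eta\in L^1$ against $\langle\cdot\rangle^{b-1}\widehat g\in L^2$, which is $\lesssim\|g\|_{H^{b-1}}$ by Young. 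The remaining, constant–in–$t$ piece is $\eta(t)\int_{|\xi|>1}\frac{\widehat g(\xi)}{i\xi}\,d\xi$, whose $H^b_t$ norm is $\|\eta\|_{H^b_t}\,\big|\int_{|\xi|>1}\frac{\widehat g(\xi)}{i\xi}\,d\xi\big|\lesssim\|\eta\|_{H^b_t}\big(\int_{|\xi|>1}|\xi|^{-2}\langle\xi\rangle^{2-2b}\,d\xi\big)^{1/2}\|g\|_{H^{b-1}}$, and this last integral is finite precisely when $2b>1$.

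\emph{Main obstacle.} The only genuinely delicate point is the scalar inequality of Step 3: the factor $1/\xi$ produced by integrating in time is a true singularity and forces the threshold $b>\tfrac12$, whereas absorbing $\langle D\rangle^{-1}$, delocalizing, and the conjugation reduction are bookkeeping. This is a standard property of restriction (Bourgain) spaces and may alternatively simply be quoted from \cite{ETBook}; the outline above is included for completeness.
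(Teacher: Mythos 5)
Your proof is correct and follows essentially the same route as the paper: both absorb $\langle D\rangle^{-1}$, invoke the Sobolev characterization \eqref{RestrictionSobolevChar} to conjugate away the propagator and reduce to a one-dimensional-in-time Duhamel estimate, and then conclude from the scalar bound $\|\eta(t)\int_0^t g\,d\tau\|_{H^b_t}\lesssim\|g\|_{H^{b-1}_t}$ for $b>\tfrac12$. The only difference is that the paper quotes this last estimate from Lemma~3.12 of \cite{ETBook}, whereas you prove it from scratch via the frequency split $|\xi|\le1$ versus $|\xi|>1$; the details of that computation are correct and make the argument self-contained.
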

	We provide now some additional estimates on restriction spaces $X^{s,b}_{\delta,\pm}$ that are proved using the generalized Strichartz spaces \eqref{StrichartzSpace}.
	\begin{lemma}
		Let $\frac{1}{2} < b < s < 1$. Then 
		\begin{align*}
			\| \eta(t) \int_0^t \frac{e^{\mp i (t - \tau) \langle D \rangle}}{\langle D \rangle}(F(\tau)) d\tau\|_{X^{s,b}_{\delta,\pm}} \lesssim \| F\|_{L^{\frac{2}{2-s}}_{\delta}L^{\frac{2}{2-s}}}.
		\end{align*}
		\label{lemma:MainInt}
	\end{lemma}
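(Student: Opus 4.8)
The plan is to estimate the Duhamel operator appearing on the left via the restriction‐space characterization \eqref{RestrictionSobolevChar} together with the dual Strichartz embedding \eqref{EstimateStrichartzRestrictionDual}. First I would observe that, by Lemma \ref{EstDuhamelCubic} applied with regularity indices shifted appropriately, it suffices to bound
\begin{align*}
\| F \|_{X^{s-1,b-1}_{\delta,\pm}}
\lesssim \| F \|_{X^{-(1-s),-(1-b)}_{\delta,\pm}}
\end{align*}
and then to invoke \eqref{EstimateStrichartzRestrictionDual} with the choice $\sigma = 1-s$ in place of $s$ and $\beta = 1-b$ in place of $b$. For this to be legitimate we need $1-s \in (0,1)$ and $1-b > 1/2$; the first is immediate since $1/2 < s \le 5/8$, and the second forces $b < 1/2$, which clashes with the hypothesis $b > 1/2$, so a slightly more careful argument is required: one uses instead the fact that $X^{s-1,b-1}_{\delta,\pm}$ with $b-1<0$ can be dominated, after the Duhamel integration has absorbed one power of $\langle \tau \pm \langle n\rangle\rangle$, by a space at a nonnegative time‐regularity level. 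Concretely, the content of Lemma \ref{EstDuhamelCubic} is exactly that the Duhamel operator gains this, so the real task is to show
\begin{align*}
\| F \|_{Y^{1-s}_{\delta}} \lesssim \| F \|_{L^{\frac{4}{4s+1}}_{\delta} L^{\frac{4}{5-4s}}},
\end{align*}
i.e. that the explicit Lebesgue pair $\big(\tfrac{4}{4s+1}, \tfrac{4}{5-4s}\big)$ is (the conjugate of) a $(1-s)$‑admissible pair, up to Hölder in time on the finite interval $[0,\delta]$.

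The key computation is therefore purely arithmetic: I must check that the exponents $(p,q) = \big(\tfrac{4s+1}{4s}\cdot\text{(something)}, \dots\big)$ — more precisely the Hölder conjugates $p' = \tfrac{4}{4s+1}$, $q' = \tfrac{4}{5-4s}$ — satisfy the $(1-s)$‑admissibility relation $\tfrac1p + \tfrac3q = \tfrac32 - (1-s) = \tfrac12 + s$. Writing $\tfrac1p = 1 - \tfrac{4}{4s+1}$ is wrong; rather $\tfrac1p = 1 - \tfrac{1}{p'} = 1 - \tfrac{4s+1}{4} = \tfrac{3-4s}{4}$ and $\tfrac1q = 1 - \tfrac{5-4s}{4} = \tfrac{4s-1}{4}$, so $\tfrac1p + \tfrac3q = \tfrac{3-4s}{4} + \tfrac{3(4s-1)}{4} = \tfrac{8s}{4} = 2s$. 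This does not match $\tfrac12 + s$ directly, which tells me the correct route is not bare duality against $Y^{1-s}_\delta$ but rather: apply Lemma \ref{EstDuhamelCubic} to reduce to $\|F\|_{X^{s-1,b-1}_{\delta,\pm}}$, then use the embedding \eqref{EstimateStrichartzRestrictionDual} in the form $\|F\|_{X^{-(1-s),-b'}_{\pm}} \lesssim \|F\|_{Y^{1-s}_\delta}$ for admissible $b'$, and finally absorb the discrepancy between $b-1$ and $-b'$ using $\delta < 1$ and the elementary inequality $\|F\|_{X^{s',b_1}_\delta} \lesssim \|F\|_{X^{s',b_2}_\delta}$ when $b_1 \le b_2$ — here the localization in $[0,\delta]$ is what makes the negative‑$b$ manipulations harmless.

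Assembling: first invoke Lemma \ref{EstDuhamelCubic} with $s \mapsto s$, reducing the left side to $\lesssim \|F\|_{X^{s-1,b-1}_{\delta,\pm}}$; second, since $s-1 = -(1-s)$ with $1-s \in [\tfrac38,\tfrac12)$, and $b-1 < 0$, bound $\|F\|_{X^{s-1,b-1}_{\delta,\pm}} \lesssim \|F\|_{X^{-(1-s),-\beta}_{\delta,\pm}}$ for a suitable $\beta \in (\tfrac12,1)$ by Hölder in the localized time variable; third apply \eqref{EstimateStrichartzRestrictionDual} (stated in its localized form at the end of that discussion) to get $\lesssim \|F\|_{Y^{1-s}_\delta}$; fourth, observe that the Lebesgue pair $\big(\tfrac{4}{4s+1},\tfrac{4}{5-4s}\big)$ has a $(1-s)$‑admissible Hölder conjugate after one more application of Hölder in time on $[0,\delta]$ — this is where the constraint $s \le \tfrac58$ enters, guaranteeing $\tfrac{4}{5-4s} \le \tfrac{4}{5/2} $ stays in an admissible range and that the time exponent lost to Hölder is nonnegative — and conclude by the definition \eqref{StrichartzSpaceDual} of the $Y^{1-s}_\delta$ norm as an infimum. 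The main obstacle is this last bookkeeping step: matching the given explicit exponents to an admissible pair requires the admissibility identity to hold only after a Hölder loss in time, and one must verify that the resulting time exponent is $\ge 1$ (so that $\delta^{\text{positive}}$ bounds the loss) precisely on the range $\tfrac12 < s \le \tfrac58$; I expect the endpoint $s = \tfrac58$ to be exactly the threshold where this time exponent degenerates, which is why the lemma is stated with that ceiling.
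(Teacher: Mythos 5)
Your first reduction (applying Lemma \ref{EstDuhamelCubic} to arrive at $\|F\|_{X^{s-1,b-1}_{\delta,\pm}}$) and your observation that $(\tfrac{4}{4s+1},\tfrac{4}{5-4s})$ is \emph{not} the conjugate of a $(1-s)$-admissible pair are both correct, but the proposed repair does not work. You claim the bound
\begin{align*}
\|F\|_{X^{s-1,b-1}_{\delta,\pm}} \lesssim \|F\|_{X^{-(1-s),-\beta}_{\delta,\pm}}, \qquad \beta \in (\tfrac12,1),
\end{align*}
``by H\"older in the localized time variable.'' Since $b-1 \in (-\tfrac12,-\tfrac38]$ and $-\beta < -\tfrac12$, you have $-\beta < b-1$, so the norm on the left carries the \emph{larger} modulation weight $\langle \tau \pm \langle n\rangle\rangle^{b-1} \geq \langle \tau \pm \langle n\rangle\rangle^{-\beta}$. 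The inequality therefore runs in the wrong direction: one can always bound a lower-$b$ norm by a higher-$b$ norm (trivially, or with a gain of $\delta$ for the localized spaces), but never the reverse. Time localization buys you $\delta$-gains when going from high $b$ to low $b$; it cannot upgrade a weak modulation weight to a strong one. This is precisely the obstruction the paper flags with the remark that \eqref{EstimateStrichartzRestrictionDual} ``cannot be applied directly, given that $b-1 > -\tfrac12$.''

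The paper's actual resolution is a Riesz--Thorin interpolation between two inclusions: the trivial isometry $\iota_1 : L^2_\delta L^2 \hookrightarrow X^{0,0}_{\delta,\pm}$ (Plancherel), and a dual-Strichartz endpoint $\iota_2 : L^{\frac{8(1-s)}{3}}_\delta L^{\frac{8(1-s)}{7-8s}} \hookrightarrow X^{-\frac12, -\frac12 - \frac{s-b}{2(1-s)}}_{\delta,\pm}$, which is legitimate because its modulation index $-\tfrac12 - \tfrac{s-b}{2(1-s)}$ lies strictly below $-\tfrac12$ (using $s > b$) and its Lebesgue exponents are the conjugates of a genuine $\tfrac12$-admissible pair (this is where $s \le \tfrac58$ is used: the $\tfrac12$-admissibility constraint $p \ge 4$ together with $p \le \infty$ forces $\tfrac12 \le s \le \tfrac58$). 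Interpolating with parameter $\theta = 2-2s$ lands simultaneously on spatial index $s-1$, modulation index $b-1$, and Lebesgue exponents $(\tfrac{4}{4s+1},\tfrac{4}{5-4s})$. The crucial point you are missing is that one needs an interpolation \emph{with the trivial $L^2$ endpoint at $b$-level $0$} precisely to reach a $b$-level inside $(-\tfrac12,0)$ that no single dual Strichartz embedding can reach; there is no way to get there by a one-step monotonicity argument in $b$.
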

	\begin{proof}
		We define
		\begin{align*}
			TF =  \eta(t) \int_0^t \frac{e^{\mp i (t - \tau) \langle D \rangle}}{\langle D \rangle}(F(\tau)) d\tau.
		\end{align*}
		By Lemma \ref{EstDuhamelCubic} we deduce
		\begin{align*}
			\| TF\|_{X^{s,b}_{\delta,\pm}} \lesssim  \| F\|_{X^{s-1,b-1}_{\delta,\pm}} = (*).
		\end{align*}
		Observe that we cannot apply \eqref{EstimateStrichartzRestrictionDual} directly, given that $b-1 > -\frac{1}{2}$. Note that the inclusion $\iota_1 : L^2_{\delta}L^2 \mapsto X^{0,0}_{\delta,\pm}$ is an isometry due to Plancherel theorem. On the other hand, let $r \in (\frac{1}{2},1)$. The inclusion $\iota_2 : L^{\frac{2}{r+1}}_{\delta}L^{\frac{2}{2-r}} \mapsto X^{r-1,-\frac{1}{2}-\frac{s-b}{2(1-s)}}_{\delta,\pm}$ is continuous due to \eqref{EstimateStrichartzRestrictionDual}, since $b < s$, $s < 1$ and the exponents of the pair $(\frac{2}{r+1},\frac{2}{2-r})$ are the conjugates of the exponents of a $(1-r)$ - admissible pair. Thus, through Riesz-Thorin interpolation we obtain, for $\theta \in [0,1]$, that the inclusion
		\begin{align*}
			\iota : L^{p}_{\delta}L^{q
			} \mapsto X^{s-1,b-1}_{\delta,\pm}
		\end{align*}
		is continuous, where
		\begin{align*}
			&s-1 = \theta (r - 1), \quad b-1 = \theta\left( - \frac{1}{2} - \frac{s-b}{2(1-s)} \right), \nonumber\\
			&\frac{1}{p} = \frac{\theta(r+1)}{2} + \frac{1-\theta}{2}, \quad \frac{1}{q} = \frac{\theta (2-r)}{2} + \frac{1-\theta}{2}. 
		\end{align*}
		The second equality implies that $\theta = 2-2s$, which is a value in $(0,1)$ because $s \in (\frac{1}{2},1)$. Then $p = q = \frac{2}{2-s}$, and thus
		\begin{align*}
			(*) \lesssim  \| F\|_{L^{\frac{2}{2-s}}_{\delta}L^{\frac{2}{2-s}}}.
		\end{align*}
	\end{proof}
	\begin{lemma}
	Let $\frac12 < b < s < 1$ and $j \in \{+,-\}$. Then,
		\begin{align*}
			\|\eta(t) \int_0^t \frac{e^{\mp i (t - \tau) \langle D \rangle}}{\langle D \rangle}(u^3(\tau)) d\tau \|_{X^{s,b}_{\delta,\pm}} \lesssim \delta^{s - \frac12} \| u \|^3_{X^{s,b}_{\delta,j}}.
		\end{align*}
		 Moreover, given $\tilde{u} = (u_+,u_-) \in X^{s,b}_{\delta,+}\times X^{s,b}_{\delta,-}$, we have
		\begin{align*}
			\|\eta(t) \int_0^t \frac{e^{\mp i (t - \tau) \langle D \rangle}}{\langle D \rangle}\left( u_+ + u_- \right)^3 d\tau \|_{X^{s,b}_{\delta,\pm}} \lesssim \delta^{s - \frac12} \| \tilde{u} \|^3_{X^{s,b}_{\delta,+} \times X^{s,b}_{\delta,-}}.
		\end{align*}
		\label{lemma:DetFixedPoint}
	\end{lemma}
	\begin{proof}
		 By Lemma \ref{lemma:MainInt}, maintaining the same notation,
		\begin{equation*}
			\|  T(u^3) \|_{X^{s,b}_{\delta,\pm}} \lesssim 
			\| u^3\|_{L^{\frac{2}{2-s}}_{\delta}L^{\frac{2}{2-s}}} = \| u\|_{L^{\frac{6}{2-s}}_{\delta}L^{\frac{6}{2-s}}}^3 \leq \delta^{s - \frac12}
			\| u\|_{L^{\frac{2}{1-s}}_{\delta}L^{\frac{6}{2-s}}}^3
			\leq \delta^{s - \frac12}
			\| u\|_{X^s_{\delta}}^3 ,
		\end{equation*}
		where the penultimate inequality follows by H\"older inequality in time and the last inequality follows by the 
		fact that the pair $(\frac{2}{1-s}, \frac{6}{2-s})$ is $s$-admissible. Recalling \eqref{EstimateStrichartzRestriction}, we have proved the first inequality.
				
		Repeating the same estimates,
		\begin{align*}
			&\|\eta(t) \int_0^t \frac{e^{\mp i (t - \tau) \langle D \rangle}}{\langle D \rangle}\left( u_+ + u_- \right)^3 d\tau \|_{X^{s,b}_{\delta,\pm}} 
			\\ &
			\lesssim \delta^{s-\frac12} \|  u_+ + u_- \|_{L^{\frac{2}{1-s}}_{\delta}L^{\frac{6}{2-s}}}^3  
			\lesssim \delta^{s-\frac12} \left( \|  u_+ \|_{L^{\frac{2}{1-s}}_{\delta}L^{\frac{6}{2-s}}} +  \| u_- \|_{L^{\frac{2}{1-s}}_{\delta}L^{\frac{6}{2-s}}} \right)^3
			\\ &
			\lesssim 
			\delta^{s - \frac12} (\|  u_+ \|_{X^{s,b}_{\delta,+}} + \|  u_- \|_{X^{s,b}_{\delta,-}})^3  
			= \delta^{s - \frac12} \| \tilde{u} \|_{X^{s,b}_{\delta,+} \times X^{s,b}_{\delta,-}}^3.
		\end{align*}
	\end{proof}
	
	As an intermediate step of this Lemma \ref{lemma:DetFixedPoint}, we have proved the following corollary that will be useful later.
	\begin{corollary}
		Let $\frac12 < b < s < 1$. Then
		\begin{align*}
			\| u^3 \|_{L^{\frac{2}{2-s}}_{\delta}L^{\frac{2}{2-s}}} \lesssim \delta^{s - \frac12} \| u \|^3_{X^s_{\delta}}.
		\end{align*}
		\label{cor:EmbXsbj}
	\end{corollary}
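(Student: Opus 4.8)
The plan is to simply isolate the chain of elementary inequalities that already appears at the very beginning of the proof of Lemma \ref{lemma:DetFixedPoint}: the corollary is precisely that first displayed estimate, read off \emph{before} the restriction-space embedding \eqref{EstimateStrichartzRestriction} is invoked. Concretely, I would write $p = \tfrac{4}{4s+1}$, $q = \tfrac{4}{5-4s}$ and first reduce the cube to a linear quantity: since $3p = \tfrac{12}{4s+1}$ and $3q = \tfrac{12}{5-4s}$, one has the pointwise-in-$(t,x)$ identity
\[
\| u^3 \|_{L^{4/(4s+1)}_{\delta} L^{4/(5-4s)}} = \| u \|_{L^{12/(4s+1)}_{\delta} L^{12/(5-4s)}}^3 .
\]

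Next I would apply H\"older only in the time variable. Because $\tfrac{4s+1}{12} - \tfrac14 = \tfrac{2s-1}{6} \ge 0$ for $s \ge \tfrac12$, the embedding $L^4_\delta \hookrightarrow L^{12/(4s+1)}_\delta$ holds on the interval $[0,\delta]$ with norm $\delta^{(2s-1)/6}$, so that $\| u \|_{L^{12/(4s+1)}_{\delta} L^{12/(5-4s)}} \le \delta^{(2s-1)/6}\,\| u \|_{L^{4}_{\delta} L^{12/(5-4s)}}$; cubing turns the prefactor into $\delta^{3(2s-1)/6} = \delta^{s-1/2}$. Then I would check that the pair $\bigl(4,\tfrac{12}{5-4s}\bigr)$ is $s$-admissible in the sense of \eqref{StrichartzSpace}: indeed $\tfrac14 + \tfrac{3(5-4s)}{12} = \tfrac14 + \tfrac{5-4s}{4} = \tfrac32 - s$, and the constraint $\tfrac2s \le 4$ holds exactly because $s \ge \tfrac12$ (the upper bound $s \le \tfrac58$ only ensures, harmlessly, that the space exponent $\tfrac{12}{5-4s}$ stays $\ge 4$, consistently with the hypotheses of Lemma \ref{lemma:MainInt} from which the corollary is extracted). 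Consequently, by the very definition of the $X^s_\delta$ norm as the supremum over $C([0,\delta];H^s)$ and all $s$-admissible Strichartz norms, $\| u \|_{L^{4}_{\delta} L^{12/(5-4s)}} \le \| u \|_{X^s_\delta}$.

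Chaining the three bounds yields $\| u^3 \|_{L^{4/(4s+1)}_{\delta} L^{4/(5-4s)}} \lesssim \delta^{s-1/2}\| u \|_{X^s_\delta}^3$, as claimed. I do not expect any genuine obstacle here: the only point that requires attention is the bookkeeping of the Lebesgue exponents — verifying that $(4,\tfrac{12}{5-4s})$ lies in the admissible range and that the time-H\"older gain is exactly $\delta^{s-1/2}$ after cubing — while everything else is H\"older's inequality and the definition of $X^s_\delta$.
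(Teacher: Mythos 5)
Your proposal is correct and coincides with the paper's own argument: the corollary is extracted verbatim from the opening chain of inequalities in the proof of Lemma~\ref{lemma:DetFixedPoint}, namely the pointwise identity $\|u^3\|_{L^{4/(4s+1)}_{\delta}L^{4/(5-4s)}} = \|u\|^3_{L^{12/(4s+1)}_{\delta}L^{12/(5-4s)}}$, H\"older in time yielding the factor $\delta^{s-1/2}$, and the $s$-admissibility of $(4,\tfrac{12}{5-4s})$ to pass to the $X^s_\delta$ norm. The exponent bookkeeping you carried out matches the paper exactly.
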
	
	The same argument leads to the following lemma. Recall that $B(0,R)$ is the ball centered
	in $0$ with radius $R$ within the space $X^{s,b}_{\delta,+} \times X^{s,b}_{\delta,-}$.
	\begin{lemma}
		Let $R>0$ and $\frac12 < b < s < 1$. Then, for any $j \in \{+,-\}$ and $u,v \in B_j(0,R)$,
		\begin{align*}
			\| \eta(t) \int_0^t \frac{e^{\mp i (t - \tau) \langle D \rangle}}{\langle D \rangle}(u^3(\tau) - v^3(\tau)) d\tau \|_{X^{s,b}_{\delta,\pm}} \lesssim R^2 \delta^{s - \frac12}\|u-v\|_{X^{s,b}_{\delta,j}},
		\end{align*}
		Moreover, given $\tilde{u}=(u_+,u_-),\tilde{v}=(v_+,v_-) \in B(0,R)$ we have
		\begin{align*}
			&\| \eta(t) \int_0^t \frac{e^{\mp i (t - \tau) \langle D \rangle}}{\langle D \rangle}( (u_+(\tau) + u_-(\tau) )^3 -  (v_+(\tau) + v_-(\tau))^3) d\tau \|_{X^{s,b}_{\delta,\pm}} \nonumber\\
			&\lesssim R^2 \delta^{s-\frac12}\|\tilde{u}-\tilde{v}\|_{X^{s,b}_{\delta,+} \times X^{s,b}_{\delta,-}}.
		\end{align*}
		\label{lemma:DetAbsorption}
	\end{lemma}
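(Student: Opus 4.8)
The plan is to reduce the difference estimate to the (already proven) trilinear bound of Lemma \ref{lemma:DetFixedPoint} via the algebraic factorization of a difference of cubes. For the first inequality, I write
\begin{align*}
u^3 - v^3 = (u-v)(u^2 + uv + v^2),
\end{align*}
so that, applying Lemma \ref{lemma:MainInt} to the operator $T$ defined there,
\begin{align*}
\Bigl\| \eta(t) \int_0^t \frac{e^{\mp i (t-\tau)\langle D\rangle}}{\langle D\rangle}(u^3 - v^3)\,d\tau \Bigr\|_{X^{s,b}_{\delta,\pm}} \lesssim \| (u-v)(u^2+uv+v^2) \|_{L^{\frac{4}{4s+1}}_\delta L^{\frac{4}{5-4s}}}.
\end{align*}
Then I would split the right-hand side by the triangle inequality into the three terms with $u^2$, $uv$, $v^2$, and in each apply Hölder in space and time exactly as in the proof of Corollary \ref{cor:EmbXsbj}: the exponent $\frac{4}{4s+1}$ in time splits as three copies of $\frac{12}{4s+1}$, one of which is bounded (using $u,v \in B_j(0,R)$ and Hölder in time, gaining the factor $\delta^{s-\frac12}$) by the $L^4_\delta$-norm, and the spatial exponent $\frac{4}{5-4s}$ splits as three copies of $\frac{12}{5-4s}$, each controlled by the relevant $X^s_\delta$-norm since $(4, \frac{12}{5-4s})$ is $s$-admissible. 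Finally \eqref{EstimateStrichartzRestriction} converts the $X^s_\delta$-norms into $X^{s,b}_{\delta,j}$-norms, and the two factors not carrying the difference are each bounded by $R$, producing the claimed $R^2 \delta^{s-\frac12}\|u-v\|_{X^{s,b}_{\delta,j}}$.

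For the second inequality I would use the corresponding identity for the symmetrized cube. Writing $U = u_+ + u_-$ and $V = v_+ + v_-$, one has $U^3 - V^3 = (U-V)(U^2+UV+V^2)$ with $U - V = (u_+ - v_+) + (u_- - v_-)$. Expanding $U-V$ into its two pieces and $U^2+UV+V^2$ into a finite sum of quadratic monomials in $u_\pm, v_\pm$, the whole expression is a finite sum of trilinear terms, each of which carries exactly one of the differences $u_\pm - v_\pm$ and two further factors among $\{u_+, u_-, v_+, v_-\}$. Running the same Hölder scheme as above on each term, bounding each "extra" factor in $L^4_\delta L^{\frac{12}{5-4s}}$ by the appropriate component of $\tilde u$ or $\tilde v$ (hence by $R$ since $\tilde u, \tilde v \in B(0,R)$) and the difference factor by $\|\tilde u - \tilde v\|_{X^{s,b}_{\delta,+}\times X^{s,b}_{\delta,-}}$, summing the finitely many terms yields the stated bound.

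The argument is essentially bookkeeping, so there is no serious obstacle; the one point requiring a little care is making sure the Hölder splitting of the triple product in $L^{\frac{4}{4s+1}}_\delta L^{\frac{4}{5-4s}}$ distributes the time gain $\delta^{s-\frac12}$ onto only one of the three factors (so that the power of $\delta$ is $s - \frac12$ and not a larger multiple), and that the factor receiving the gain is one of the non-difference factors — this is exactly the computation already carried out in Lemma \ref{lemma:DetFixedPoint}, now applied term by term rather than to a single cube.
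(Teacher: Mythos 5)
Your approach is essentially the paper's: factor $u^3-v^3=(u-v)(u^2+uv+v^2)$, pass to $L^{\frac{4}{4s+1}}_{\delta}L^{\frac{4}{5-4s}}$ via Lemma~\ref{lemma:MainInt} (equivalently Lemma~\ref{EstDuhamelCubic} plus the interpolation in Lemma~\ref{lemma:MainInt}), split by H\"older into three $L^{\frac{12}{4s+1}}_{\delta}L^{\frac{12}{5-4s}}$ factors, upgrade in time to $L^4_{\delta}$ gaining the $\delta^{s-\frac12}$, and then use that $(4,\frac{12}{5-4s})$ is $s$-admissible together with \eqref{EstimateStrichartzRestriction}. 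For the vector version the paper simply substitutes $u=u_++u_-$, $v=v_++v_-$ in the resulting bound $(**)$ and applies the triangle inequality, whereas you propose expanding the difference of cubes into trilinear monomials first; the two are interchangeable since the $X^s_\delta$ and $L^4_{\delta}L^{\frac{12}{5-4s}}$ norms both respect sums by the triangle inequality, so this is a presentational rather than substantive difference.

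One piece of bookkeeping in your write-up is off and worth correcting: the gain of $\delta^{s-\frac12}$ does \emph{not} come from H\"oldering a single factor from $L^{\frac{12}{4s+1}}_\delta$ to $L^4_\delta$. Since $\frac{4s+1}{12}-\frac14=\frac13(s-\frac12)$, each of the three factors contributes only $\delta^{\frac13(s-\frac12)}$, and it is the product of the three such gains that yields the total $\delta^{s-\frac12}$. Accordingly, your worry that applying H\"older to each factor would produce a larger (i.e.\ worse) power of $\delta$, and that the gain must be attached specifically to a non-difference factor, is unfounded: all three factors share the gain, and since $\delta<1$ a larger exponent would in any case only strengthen the bound. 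This slip does not undermine the argument — the claimed power $\delta^{s-\frac12}$ is exactly what the computation gives — but the stated justification for it should be replaced by the even distribution just described, as in the paper's display \eqref{eq:IntEst1_Absorption}.
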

	\begin{proof}
		Applying Lemma \ref{lemma:MainInt} with its same notation for the operator $T$, we have that
		\begin{align*}
			&\| T(u^3) - T(v^3) \|_{X^{s,b}_{\delta,\pm}}  \nonumber \\
			&\lesssim \| (u - v)(u^2 + uv + v^2) \|_{L^{\frac{2}{2-s}}_{\delta}L^{\frac{2}{2-s}}} \nonumber\\
			&\lesssim  \| u-v \|_{L^{\frac{6}{2-s}}_{\delta}L^{\frac{6}{2-s}}}  (\| u \|_{L^{\frac{6}{2-s}}_{\delta}L^{\frac{6}{2-s}}}^2  + \| v \|_{L^{\frac{6}{2-s}}_{\delta}L^{\frac{6}{2-s}}}^2) = (*),
		\end{align*}
		where we used the Hölder inequality. Then we have
		\begin{align}
			&(*) \lesssim \delta^{s - \frac12}  \| u-v \|_{L^{\frac{2}{1-s}}_{\delta}L^{\frac{6}{2-s}}} (\| u \|_{L^{\frac{2}{1-s}}_{\delta}L^{\frac{6}{2-s}}}^2 + \| v \|_{L^{\frac{2}{1-s}}_{\delta}L^{\frac{6}{2-s}}}^2) \nonumber\\
			& \lesssim \delta^{s- \frac12}   \| u-v \|_{X^s_{\delta}} (\| u \|_{X^s_{\delta}}^2 + \| v \|_{X^s_{\delta}}^2) = (**).
			\label{eq:IntEst1_Absorption}
		\end{align}
		where we used H\"older inequality in time and the fact that $(\frac{2}{1-s},\frac{6}{2-s})$ is $s$-admissible. Thus the first inequality of the statement follows by \eqref{EstimateStrichartzRestriction}.		
		
		In order to prove the second inequality, we proceed as before by substituting $u$ and $v$ by $u_+ + u_-$ and $v_+ + v_-$. 
		Recalling again \eqref{EstimateStrichartzRestriction} we can bound the first factor in $(**)$ as
		\begin{align*}
			& \| (u_+ + u_-) - (v_+ + v_-) \|_{X^s_{\delta}} \leq \| u_+ - v_+ \|_{X^s_{\delta}} + \| u_- - v_- \|_{X^s_{\delta}} \nonumber\\
			&\lesssim \| u_+ - v_+ \|_{X^{s,b}_{\delta,+}} + \| u_- - v_- \|_{X^{s,b}_{\delta,-}} = \| \tilde{u} - \tilde{v} \|_{X^{s,b}_{\delta,+} \times X^{s,b}_{\delta,-}}.
		\end{align*}
		and the second factor in $(**)$ as
		\begin{align*}
			&\| u_+ + u_- \|_{X^s_{\delta}}\| v_+ + v_- \|_{X^s_{\delta}}  \leq  ( \| u_+  \|_{X^s_{\delta}} +  \| u_- \|_{X^s_{\delta}}) ( \| v_+  \|_{X^s_{\delta}} +  \| v_- \|_{X^s_{\delta}}) \nonumber\\
			&\lesssim ( \| u_+  \|_{X^{s,b}_{\delta,+}} +  \| u_- \|_{X^{s,b}_{\delta,-}}) ( \| v_+  \|_{X^{s,b}_{\delta,+}} +  \| v_- \|_{X^{s,b}_{\delta,-}}) \nonumber\\
			&= \| \tilde{u} \|_{X^{s,b}_{\delta,+} \times X^{s,b}_{\delta,-}} \| \tilde{v} \|_{X^{s,b}_{\delta,+} \times X^{s,b}_{\delta,-}},
		\end{align*}
		that concludes the proof. 
	\end{proof}
	The following corollary follows from (a small modification of) the intermediate steps of the proof above.
	\begin{corollary}
		Let $\frac12 < b < s < 1$. Then
		\begin{align*}
			\| \eta(t) \int_0^t \frac{e^{\mp i (t - \tau) \langle D \rangle}}{\langle D \rangle}
			P_{\leq N}(u^3(\tau) - v^3(\tau)) d\tau \|_{X^{s,b}_{\delta,\pm}} \lesssim \delta^{s - \frac12} M(u,v) \|u-v\|_{L^{\frac{2}{1-s}}_{\delta}L^{\frac{6}{2-s}}}
		\end{align*}
		where 
		$$
		M(u,v) : = \|u\|^2_{L^{\frac{2}{1-s}}_{\delta}L^{\frac{6}{2-s}}} + \|v\|^2_{L^{\frac{2}{1-s}}_{\delta}L^{\frac{6}{2-s}}}.
		$$
		\label{cor:LppEmb}
	\end{corollary}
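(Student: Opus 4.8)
The plan is to note that, once the frequency cutoff $P_{\leq N}$ has been discarded at no cost, the asserted bound is simply one of the intermediate inequalities already contained in the proof of Lemma~\ref{lemma:DetAbsorption}, stopped one step before the final application of \eqref{EstimateStrichartzRestriction} that converts $L^{4}_{\delta}L^{\frac{12}{5-4s}}$ into $X^s_\delta$. First I would remove $P_{\leq N}$. Since $P_{\leq N}$ is a Fourier multiplier acting only in the space variable, it commutes with $\langle D\rangle^{-1}$, with the half-wave propagators $e^{\mp i(t-\tau)\langle D\rangle}$, with multiplication by $\eta(t)$ and with the time integral $\int_0^t\,d\tau$; hence the left-hand side equals $\|P_{\leq N}\bigl(T(u^3)-T(v^3)\bigr)\|_{X^{s,b}_{\delta,\pm}}$, where $T$ is the operator introduced in the proof of Lemma~\ref{lemma:MainInt} and $T(u^3)-T(v^3)=T(u^3-v^3)$ by linearity. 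Moreover $P_{\leq N}$ is a contraction on $X^{s,b}_{\delta,\pm}$: given any extension $G$ of $\bigl(T(u^3-v^3)\bigr)|_{[0,\delta]}$, the function $P_{\leq N}G$ is an extension of $P_{\leq N}\bigl(T(u^3-v^3)\bigr)|_{[0,\delta]}$ and, reading off the definition of $\|\cdot\|_{X^{s,b}_{\pm}}$, applying $P_{\leq N}$ merely deletes spatial Fourier modes, so $\|P_{\leq N}G\|_{X^{s,b}_{\pm}}\leq\|G\|_{X^{s,b}_{\pm}}$; taking the infimum over $G$ yields
\[
\Bigl\|\,\eta(t)\!\int_0^t \tfrac{e^{\mp i(t-\tau)\langle D\rangle}}{\langle D\rangle}P_{\leq N}(u^3-v^3)\,d\tau\Bigr\|_{X^{s,b}_{\delta,\pm}}\ \leq\ \|T(u^3-v^3)\|_{X^{s,b}_{\delta,\pm}}.
\]

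Then I would reproduce the chain of estimates from the proof of Lemma~\ref{lemma:DetAbsorption}. By Lemma~\ref{lemma:MainInt} applied to $F=u^3-v^3$ we have $\|T(u^3-v^3)\|_{X^{s,b}_{\delta,\pm}}\lesssim\|u^3-v^3\|_{L^{4/(4s+1)}_{\delta}L^{4/(5-4s)}}$. Factoring $u^3-v^3=(u-v)(u^2+uv+v^2)$ and applying Hölder's inequality in space and in time with three equal exponents (legitimate since $\tfrac{4s+1}{4}=3\cdot\tfrac{4s+1}{12}$ and $\tfrac{5-4s}{4}=3\cdot\tfrac{5-4s}{12}$), this is bounded by $\|u-v\|_{L^{12/(4s+1)}_{\delta}L^{12/(5-4s)}}\bigl(\|u\|^2_{L^{12/(4s+1)}_{\delta}L^{12/(5-4s)}}+\|v\|^2_{L^{12/(4s+1)}_{\delta}L^{12/(5-4s)}}\bigr)$. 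Since $s>1/2$ one has $\tfrac{12}{4s+1}\leq 4$, so Hölder's inequality in time gives $\|g\|_{L^{12/(4s+1)}_{\delta}}\leq\delta^{(2s-1)/6}\|g\|_{L^{4}_{\delta}}$; applying this to each of the three factors produces the gain $\delta^{3(2s-1)/6}=\delta^{s-1/2}$ and replaces every time exponent by $4$. Altogether
\[
\Bigl\|\,\eta(t)\!\int_0^t \tfrac{e^{\mp i(t-\tau)\langle D\rangle}}{\langle D\rangle}P_{\leq N}(u^3-v^3)\,d\tau\Bigr\|_{X^{s,b}_{\delta,\pm}}\ \lesssim\ \delta^{s-\frac12}\,\|u-v\|_{L^{4}_{\delta}L^{\frac{12}{5-4s}}}\bigl(\|u\|^2_{L^{4}_{\delta}L^{\frac{12}{5-4s}}}+\|v\|^2_{L^{4}_{\delta}L^{\frac{12}{5-4s}}}\bigr),
\]
which is precisely the claimed inequality with $M(u,v)$ as defined.

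I do not expect a genuine obstacle, since all the analytic content has already been established in Lemmas~\ref{lemma:MainInt}--\ref{lemma:DetAbsorption}. The only two points requiring (minor) care are: (i) showing that inserting $P_{\leq N}$ costs nothing, for which the commutation-plus-contractivity argument above is cleaner than invoking uniform $L^q(\mathbb{T}^3)$-boundedness of the multiplier $P_{\leq N}$ (which is also true but carries its own caveat if the cutoff is taken sharp); and (ii) the Hölder-in-time bookkeeping, which needs exactly $s\geq 1/2$, and this is guaranteed by the standing hypothesis $s>b>\tfrac12$.
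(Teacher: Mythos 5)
Your proof is correct and matches the paper's (unwritten) argument: the corollary is stated there as following "from (a small modification of) the intermediate steps of the proof" of Lemma~\ref{lemma:DetAbsorption}, which is exactly what you reproduce, stopping one step before the final passage from $L^4_\delta L^{12/(5-4s)}$ to $X^s_\delta$. Your commutation-plus-contractivity argument for discarding $P_{\leq N}$ is the right way to do it, and is in fact essential rather than merely "cleaner": since $P_{\leq N}$ is a sharp ball projection and the relevant exponent $\tfrac{4}{5-4s}\in(1,2)$, uniform $L^q(\mathbb{T}^3)$-boundedness is genuinely false (Fefferman's ball multiplier theorem), so pulling $P_{\leq N}$ through at the $X^{s,b}$ or $X^{s-1,b-1}$ level, where it trivially contracts the $\ell^2_n$ sum, is the only safe route.
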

	
	One can prove using generalized Strichartz estimates that \eqref{nlwcauchy} is locally well-posed for any $(u_0,u_1) \in H^s(\mathbb{T}^3) \times H^{s-1}(\mathbb{T}^3)$ if $s \geq 1/2$  (the case $s \geq 1$ is elementary), see the first Chapter in \cite{GST19}. However, for our purposes, it will be important to develop a local well-posedness theory   in restriction spaces. This is certainly a well known fact, however we will give all the details for the reader's convenience. 
	
	\begin{proposition}[Deterministic local well-posedness]
		Let  $\frac12 < b < s < 1$, $f_{\pm} \in H^s(\mathbb{T}^3)$ and $\Lambda = \max\{ \|f_+\|_{H^s(\mathbb{T}^3)} , \|f_-\|_{H^s(\mathbb{T}^3)} \}$. Then, there exists $B >0$, $C >0$ such that for all $\delta \in (0,B\Lambda^{-2/(s - \frac12)})$ the Cauchy problem
		\begin{equation}
			\left\{
			\begin{array}{l}
				(\partial_t + i\langle D \rangle)u_+ = i\langle D \rangle^{-1} \left( \frac{u_+ + u_-}{2} \right)^3, \quad u_+(x,0) = f_+(x)
				\\
				(\partial_t - i\langle D \rangle)u_- = - i\langle D \rangle^{-1} \left( \frac{u_+ + u_-}{2} \right)^3, \quad u_-(x,0) = f_-(x)
			\end{array}
			\right.
		\end{equation}
		admits a unique solution $(u_+,u_-) \in X^{s,b}_{\delta,+} \times X^{s,b}_{\delta,-}$, and it satisfies 
		\begin{align}\label{hngjfkhhjhhgfgsaskdjhfgj}
		\|(u_+,u_-)\|_{X^{s,b}_{\delta,+} \times X^{s,b}_{\delta,-}} \leq C \Lambda.
\end{align}				
		\label{prop:LWP}
	\end{proposition}
	\begin{proof}
		The proof is done by Banach fixed point. We consider $f = (f_+,f_-) \in H^s(\mathbb{T}^3) \times H^s(\mathbb{T}^3)$, and the map $\Gamma_f : X^{s,b}_{\delta,+} \times X^{s,b}_{\delta,-} \mapsto X^{s,b}_{\delta,+} \times X^{s,b}_{\delta,-}$ defined by
		\begin{align*}
			\Gamma_f(u) := 
			\begin{pmatrix} \Gamma_{f,+}(u) \\  \\ \Gamma_{f,-}(u) \\\end{pmatrix}
			:=
			\begin{pmatrix} \eta(t)e^{-it\langle D \rangle} f_+ + i \eta(t) \int_0^t \frac{e^{-i(t-\tau)\langle D \rangle}}{\langle D \rangle} \left( \frac{u_+ + u_-}{2}  \right)^3 d\tau \\  \\ \eta(t)e^{it\langle D \rangle} f_- - i \eta(t) \int_0^t \frac{e^{i(t-\tau)\langle D \rangle}}{\langle D \rangle} \left( \frac{u_+ + u_-}{2}  \right)^3 d\tau \\\end{pmatrix} 
		\end{align*}
		for any $u = (u_+,u_-) \in X^{s,b}_{\delta,+} \times X^{s,b}_{\delta,-}$. We must show that $\Gamma_f$ is a contraction. 
		
		First, we estimate
		\begin{align*}
			&\| \Gamma_{f,\pm}(u) \|_{X^{s,b}_{\delta,\pm}} \leq \| \eta(t) e^{\mp it\langle D \rangle} f_{\pm} \|_{X^{s,b}_{\delta,\pm}} + \|  \eta(t) \int_0^t \frac{e^{\mp i(t-\tau)\langle D \rangle}}{\langle D \rangle} \left( \frac{u_+ + u_-}{2}  \right)^3 d\tau \|_{X^{s,b}_{\delta,\pm}} \nonumber \\
			&\lesssim \| f_{\pm} \|_{H^s(\mathbb{T}^3)} + (*),
		\end{align*} 
		where we have used Lemma \ref{EstLinearFlowRestriction}. For the second term, we apply the second inequality of Lemma \ref{lemma:DetFixedPoint}, so that
		\begin{align*}
			(*) \lesssim \delta^{s - \frac12} \| u \|_{X^{s,b}_{\delta,+} \times X^{s,b}_{\delta,-}}^3.
		\end{align*}
		Given $u \in B(0,2 \Lambda)$ we get
		\begin{align}
			\| \Gamma_f(u) \|_{X^{s,b}_{\delta,+} \times X^{s,b}_{\delta,-}} \lesssim \Lambda + \delta^{s - \frac12}  \Lambda^3.
			\label{eq:IntEstDLWP_ball}
		\end{align}
		For the contractive property, it is enough to use the second inequality from Lemma \ref{lemma:DetAbsorption}, in a way such that, component by component and considering $u,v \in B(0,2 \Lambda)$,
		\begin{align}
			\| \Gamma_{f,\pm}(u) - \Gamma_{f,\pm}(v) \|_{X^{s,b}_{\delta,\pm}}  \lesssim \Lambda^2 \delta^{s-\frac12} \| u-v\|_{X^{s,b}_{\delta,+} \times X^{s,b}_{\delta,-}}.
			\label{eq:IntEstDLWP_cont}
		\end{align}
		Taking $\delta \in (0,B\Lambda^{-2/(s - \frac12)})$ and $B>0$ sufficiently small we have
		\begin{align*}
			\Lambda + \delta^{s- \frac12}\Lambda^3 \leq 2 \Lambda, 
			\quad \Lambda^2 \delta^{s- \frac12} \leq \frac{1}{2}.
		\end{align*}
		That concludes the proof.
	\end{proof}
	
	\section{Proof of Theorem \ref{MainThm1}}\label{Sec:Proof1}
	
	We are now ready to prove our first main result, namely Theorem \ref{MainThm1}.
	We are interested in the (local) flow associated to the equations in \eqref{nlwdec}, namely
	\begin{align}
		\begin{pmatrix} \Phi_t^+ f_+(x) \\  \\ \Phi_t^- f_-(x) \\\end{pmatrix} =  \begin{pmatrix} 
			\eta(t) e^{-it\langle D \rangle} u_{0,+} + i \eta(t) \int_0^t \frac{e^{-i(t-\tau)\langle D \rangle}}{\langle D \rangle}  \left( \frac{\Phi_{\tau}^+ f_+(x) + \Phi_{\tau}^- f_-(x)}{2} \right)^3  d\tau
			\\
			\\
			\eta(t) e^{it\langle D \rangle} u_{0,-} - i \eta(t) \int_0^t \frac{e^{i(t-\tau)\langle D \rangle}}{\langle D \rangle} \left( \frac{\Phi_{\tau}^+ f_+(x) + \Phi_{\tau}^- f_-(x)}{2} \right)^3  d\tau	\\\end{pmatrix},
		\label{ourflow_truncated}
	\end{align}
	where $(f_+,f_-) \in H^s(\mathbb{T}^3) \times H^s(\mathbb{T}^3)$.
	We have observed, in Section \ref{Sec:SOTP}, that Theorem \ref{MainThm1} can be rewritten in the following way.
	\begin{theorem}
		Let $s > 1/2$ and $f=(f_+,f_-) \in H^s(\mathbb{T}^3) \times H^s(\mathbb{T}^3)$. Then
		\begin{align*}
			\lim_{t \rightarrow 0} |\Phi^{\pm}_t f_{\pm}(x) - f_{\pm}(x)| = 0 \quad \mbox{for almost every $x \in \mathbb{T}^3$}.
		\end{align*}
		\label{thm:pwc}
	\end{theorem}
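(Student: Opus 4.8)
The strategy is to apply Lemma~\ref{lemma:AdaptMaxEst_NL} with the choice $\delta = \delta_f$ coming from the deterministic local well-posedness theory, so it suffices to verify the hypothesis \eqref{equivconvergence}, i.e. that
\begin{align*}
\lim_{N \to \infty} \left\| \sup_{0 < t < \delta} |\Phi_t f - \Phi^N_t f| \right\|_{L^2_x(\mathbb{T}^3) \times L^2_x(\mathbb{T}^3)} = 0.
\end{align*}
Fix $b$ with $\frac12 < b < s$ (shrinking $s$ down to the range $(\frac12, \frac58]$ if necessary, which is harmless since larger regularity only helps), and set $\Lambda = \max\{\|f_+\|_{H^s}, \|f_-\|_{H^s}\}$. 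By Proposition~\ref{prop:LWP} there is $\delta \in (0, B\Lambda^{-2/(s-1/2)})$ for which the untruncated flow $\Phi_t f = (u_+, u_-)$ exists and lies in $B(0, C\Lambda) \subset X^{s,b}_{\delta,+} \times X^{s,b}_{\delta,-}$. The same fixed-point argument applies verbatim to the truncated equation \eqref{nlwdecTruncated}, since $P_{\leq N}$ is bounded on every space involved with norm $\leq 1$; hence for the \emph{same} $\delta$ the truncated solution $\Phi^N_t f$ exists and lies in $B(0, C\Lambda)$ as well, uniformly in $N$.

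\textbf{Reduction to a restriction-space difference bound.} By the maximal embedding \eqref{eq:MaxEstXsb}, obtained from Cowling's theorem \eqref{cowlingLemma} via the transference Lemma~\ref{TransferLemma}, we have (componentwise, then summing)
\begin{align*}
\left\| \sup_{0 < t < \delta} |\Phi_t f - \Phi^N_t f| \right\|_{L^2_x \times L^2_x} \leq C_s \, \| \Phi_t f - \Phi^N_t f \|_{X^{s,b}_{\delta,+} \times X^{s,b}_{\delta,-}},
\end{align*}
so it is enough to show the right-hand side tends to $0$ as $N \to \infty$. Writing $u = (u_+, u_-) = \Phi_t f$ and $u^N = (u^N_+, u^N_-) = \Phi^N_t f$, subtract the two Duhamel formulas \eqref{ourflow_truncated} (and its truncated analogue). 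The homogeneous part contributes $\eta(t) e^{\mp it\langle D \rangle} P_{>N} f_{\pm}$, whose $X^{s,b}_{\delta,\pm}$ norm is $\lesssim \|P_{>N} f_{\pm}\|_{H^s} \to 0$ by Lemma~\ref{EstLinearFlowRestriction} and dominated convergence. The Duhamel part splits, via $(\tfrac{u_+ + u_-}{2})^3 - P_{\leq N}(\tfrac{u^N_+ + u^N_-}{2})^3 = \big[(\tfrac{u_+ + u_-}{2})^3 - (\tfrac{u^N_+ + u^N_-}{2})^3\big] + P_{>N}(\tfrac{u^N_+ + u^N_-}{2})^3$, into a difference term and a commutator term. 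For the difference term apply the second inequality of Lemma~\ref{lemma:DetAbsorption} with $R = C\Lambda$ to get a bound $\lesssim (C\Lambda)^2 \delta^{s-1/2} \|u - u^N\|_{X^{s,b}_{\delta,+} \times X^{s,b}_{\delta,-}}$; since $\delta$ was chosen so that $(C\Lambda)^2 \delta^{s-1/2} \leq \tfrac12$ (cf. the end of the proof of Proposition~\ref{prop:LWP}, enlarging $B$'s smallness if needed), this term is absorbed into the left-hand side.

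\textbf{The remaining commutator term and conclusion.} After absorption we are left with
\begin{align*}
\|u - u^N\|_{X^{s,b}_{\delta,+} \times X^{s,b}_{\delta,-}} \lesssim \|P_{>N} f_+\|_{H^s} + \|P_{>N} f_-\|_{H^s} + \Big\| \eta(t)\!\int_0^t \frac{e^{\mp i(t-\tau)\langle D \rangle}}{\langle D \rangle} P_{>N}\Big(\tfrac{u^N_+ + u^N_-}{2}\Big)^3 d\tau \Big\|_{X^{s,b}_{\delta,\pm}}.
\end{align*}
For the last term, since $P_{>N}$ commutes with the propagator and with $\langle D \rangle^{-1}$, we may pull it out and apply the second inequality of Lemma~\ref{lemma:DetFixedPoint} to $P_{>N}$ applied to the nonlinearity — more precisely, running the proof of that lemma but inserting $P_{>N}$, which by Corollary~\ref{cor:EmbXsbj} (or directly) gives a bound $\lesssim \delta^{s-1/2} \|P_{>N}[\cdots]\|$ controlled in turn by $\|(u^N_+ + u^N_-)^3\|_{L^{4/(4s+1)}_\delta L^{4/(5-4s)}} \lesssim \delta^{s-1/2}(C\Lambda)^3$ with a gain: since $u^N \to u$ in $X^{s,b}_{\delta,\pm}$-norm is not yet known, instead note that $(u^N_+ + u^N_-)^3$ is bounded in $L^{4/(4s+1)}_\delta L^{4/(5-4s)}$ uniformly in $N$ and $P_{>N} \to 0$ strongly on that space, hence by a density/uniform-boundedness argument $\|P_{>N}(u^N_+ + u^N_-)^3\|_{L^{4/(4s+1)}_\delta L^{4/(5-4s)}} \to 0$. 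Thus the whole right-hand side tends to $0$, giving \eqref{equivconvergence}, and Lemma~\ref{lemma:AdaptMaxEst_NL} yields the claimed a.e. convergence $\Phi^{\pm}_t f_{\pm} \to f_{\pm}$.

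\textbf{Main obstacle.} The delicate point is the commutator term: one cannot simply say $P_{>N}(u^N)^3 \to 0$ because the base point $u^N$ itself moves with $N$. The clean way around this is to first establish, by the contraction estimate \eqref{eq:IntEstDLWP_cont} together with the homogeneous estimate, that $u^N \to u$ strongly in $X^{s,b}_{\delta,+} \times X^{s,b}_{\delta,-}$ (a standard consequence of the uniform contraction), and only then handle $P_{>N}(u_+ + u_-)^3 \to 0$ for the \emph{fixed} limit $u$, splitting $P_{>N}(u^N)^3 = P_{>N}[(u^N)^3 - u^3] + P_{>N} u^3$ and bounding the first bracket by Lemma~\ref{lemma:DetAbsorption} again. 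Making this circular-looking dependence rigorous — deduce $u^N \to u$, then upgrade — is the one place requiring care; everything else is a direct assembly of the lemmas in Section~\ref{Sec:DetPre}.
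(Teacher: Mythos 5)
Your overall framework (reduce via Lemma~\ref{lemma:AdaptMaxEst_NL}, embed via \eqref{eq:MaxEstXsb}, run the Duhamel-difference estimate in $X^{s,b}_{\delta,\pm}$ with an absorption step, and note that $P_{\leq N}$ is bounded so the truncated flows live in the same ball $B(0,C\Lambda)$ uniformly in $N$) is precisely the paper's approach. The genuine gap is in your handling of the commutator term. Your algebraic split puts $P_{>N}$ on the \emph{moving} cubic $\bigl(\tfrac{u^N_++u^N_-}{2}\bigr)^3$, and the ``density/uniform-boundedness'' step you invoke to conclude $\|P_{>N}(u^N_++u^N_-)^3\|\to 0$ is not valid: uniform boundedness of a sequence plus strong convergence to zero of the operators $P_{>N}$ does not control $P_{>N}$ applied to an $N$-dependent element (take $v_N=e^{2iNx}$, which is bounded in $L^2$ while $\|P_{>N}v_N\|_{L^2}=1$ for all $N$). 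You flag this yourself in the ``Main obstacle'' paragraph, but the proposed remedy --- first establish $u^N\to u$ in $X^{s,b}$ using the contraction estimate, then treat the commutator --- is circular as phrased, since the convergence $u^N\to u$ is exactly the estimate you are in the middle of proving, and the comparison of the two fixed points $u=\Gamma_f(u)$ and $u^N=\Gamma^N_f(u^N)$ reproduces the same commutator term.

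The paper sidesteps this at the level of the decomposition: it writes $g^3 - P_{\leq N}g_N^3 = P_{>N}g^3 + P_{\leq N}(g^3 - g_N^3)$, so that $P_{>N}$ lands on the \emph{fixed} quantity $g^3$ built from the untruncated solution. Then dominated convergence applies directly to the $X^{s,b}$-weighted Fourier symbol (which lies in $L^2_{\xi,\tau}$, with $\chi_{\{|\xi|>N\}}$ tending to zero pointwise), and the remaining piece $P_{\leq N}(g^3-g_N^3)$ is absorbed via Lemma~\ref{lemma:DetAbsorption} exactly as you handle your difference term. If you carry out your own proposed split $P_{>N}g_N^3 = P_{>N}(g_N^3-g^3) + P_{>N}g^3$ and absorb the first bracket by Lemma~\ref{lemma:DetAbsorption} without any prior knowledge that $u^N\to u$ --- closing the inequality in one pass, as an a priori estimate on $\|u-u^N\|_{X^{s,b}_{\delta,+}\times X^{s,b}_{\delta,-}}$ --- you reproduce the paper's decomposition verbatim; so the idea behind your fix is sound, it just cannot be staged as ``first establish $u^N\to u$, then upgrade.''
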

	\begin{proof}
		Without loss of generality we can restrict to consider $s \in (1/2, 1)$.
		It suffices to prove the convergence for each component of \eqref{ourflow_truncated}. 
		By Lemma~\ref{lemma:AdaptMaxEst_NL} and estimate \eqref{eq:MaxEstXsb} we only 
		need to show that
		\begin{align}
			&\lim_{N \rightarrow \infty}\|  \Phi^{\pm}_t f_{\pm}(x) - \Phi^{N,\pm}_tf_{\pm}(x) \|_{X^{s,b}_{\delta,\pm}} = 0
			\label{MaxEstimateRestriction}
		\end{align}
		for some $\delta > 0$ and $s,b >1/2$.  
		We take, in particular, $b \in (1/2, s)$, so that we can apply the preliminary results. 
		Note that
		\begin{align}\label{eq:int-detlwp-1}
			& \Phi^{\pm}_t f_{\pm}(x) - \Phi^{N,\pm}_tf_{\pm}(x) \nonumber\\ &= \eta(t) P_{>N}e^{\mp i t \langle D \rangle}f_{\pm}(x) \pm i \eta(t) P_{> N} \int_0^t \frac{e^{\mp i (t-\tau)\langle D \rangle}}{\langle D \rangle} g(x,\tau)^3 d\tau \nonumber\\
			&\pm  i \eta(t) P_{\leq N} \int_0^t \frac{e^{\mp i (t-\tau)\langle D \rangle}}{\langle D \rangle}( g(x,\tau)^3 -g_N(x,\tau)^3  ) d\tau = (i) + (ii) + (iii),
		\end{align}
		where
		\begin{align*}
			g_N(x,\tau) = \left(\frac{\Phi_{\tau}^{N,+}f_+(x) + \Phi_{\tau}^{N,-}f_-(x)}{2} \right), \quad g(x,\tau) = \left(\frac{\Phi_{\tau}^+ f_+(x) + \Phi_{\tau}^-f_-(x)}{2} \right).
		\end{align*}
		For $(i)$, thanks to Lemma \ref{EstLinearFlowRestriction} and dominated convergence theorem, we have
		\begin{align*}
			\| \eta(t) P_{>N}e^{\mp i t \langle D \rangle}f_{\pm}(x)  \|_{X^{s,b}_{\delta,\pm}} \lesssim \| P_{>N} f_{\pm} \|_{H^s} \rightarrow 0, \text{ for } N \rightarrow \infty.
		\end{align*} 
		For $(ii)$, thanks to the second inequality of Lemma \ref{lemma:DetFixedPoint}, we have that
		\begin{align*}
			&\left\| \eta(t)  P_{> N} \int_0^t \frac{e^{\mp i (t-\tau)\langle D \rangle}}{\langle D \rangle} g(x,\tau)^3 d\tau  \right\|_{X^{s,b}_{\delta,\pm}} \leq \left\| \eta(t)  \int_0^t \frac{e^{\mp i (t-\tau)\langle D \rangle}}{\langle D \rangle} g(x,\tau)^3 d\tau  \right\|_{X^{s,b}_{\delta,\pm}} \nonumber \\
			&\lesssim \delta^{s - \frac12} \| (\Phi_t^+f_+,\Phi_t^-f_-) \|_{X^{s,b}_{\delta,+} \times X^{s,b}_{\delta,-}}^3.
		\end{align*}
		Then, domination of the first integral is guaranteed since, from Proposition \ref{prop:LWP}, 
		\begin{align}
			\|(\Phi_t^+f_+,\Phi_t^-f_-) \|_ {X^{s,b}_{\delta,+} \times X^{s,b}_{\delta,-}}^3 \leq C \max\{ \|f_+\|_{H^s(\mathbb{T}^3)} , \|f_-\|_{H^s(\mathbb{T}^3)} \}^3.
			\label{eq:domination_pwc}
		\end{align}
		On the other hand, if we consider the element
		\begin{align*}
			h = \pm i \eta(t) \int_0^t \frac{e^{\mp i (t-\tau)\langle D \rangle}}{\langle D \rangle} g(x,\tau)^3 d\tau,
		\end{align*}
		we deduce that $\langle n \rangle^{s} \langle \gamma \pm \langle n \rangle\rangle^{b} \tilde{h}(n,\gamma)$ is 
		an element of $L^2_{\gamma}(\mathbb{R},\l^2_n(\mathbb{Z}^3))$. Thus, the tail
		\begin{align*}
		\sum_{|n| > N}  \int_{\mathbb{R}} \langle n \rangle^{2s} \langle \gamma \pm \langle n \rangle\rangle^{2b} |\tilde{h}(n,\gamma)|^2 d\gamma 
		\end{align*}
		converges to $0$ as $N \rightarrow \infty$. Therefore, $(ii)$ converges to $0$ in $X^{s,b}_{\delta,\pm}$ as $N \rightarrow \infty$.
		
		For $(iii)$, we apply Corollary \ref{cor:LppEmb}. Since $(\Phi_t^+f_+,\Phi_t^-f_-) \in B(0, C\Lambda)$ (recall that this is the ball in the $X^{s,b}_{\delta,+} \times X^{s,b}_{\delta,-}$ space) we have 
		\begin{align*}
			&\| \eta(t) P_{\leq N} \int_0^t \frac{e^{\mp i (t-\tau)\langle D \rangle}}{\langle D \rangle}( g(x,\tau)^3 - g_N(x,\tau)^3  ) d\tau  \|_{X^{s,b}_{\delta,\pm}} \nonumber\\
			&\lesssim \Lambda^2 \delta^{s - \frac12} \| \Phi_tf - \Phi_t^Nf \|_{X^{s,b}_{\delta,+} \times X^{s,b}_{\delta,-}}.
		\end{align*}
		Thus taking $\delta$ sufficiently small we can absorb $(iii)$ in the LHS of \eqref{eq:int-detlwp-1}. This concludes the proof.
	\end{proof}	
	
	\section{Probabilistic preliminaries}\label{Sec:ProbPre}	
	
	In this section we collect some probabilistic results that will be used in Section \ref{Sec:Proof2} in order to prove our second main Theorem \ref{MainThm2}. In particular, we adapt the construction of solutions from randomized initial data of \cite{BurqTzvet} to the 
	restriction spaces framework.
	
	We will consider initial data of the form
	\begin{align}
		f^w = \sum_{n \in \mathbb{Z}^3}g^w_n \hat{f}(n) e^{inx},
		\label{RandomIDdec}
	\end{align}
	where $f \in H^{\sigma}(\mathbb{T}^3)$, $\sigma >0$, and $(g_n^w)_{n \in \mathbb{Z}^3}$ 
	is a sequence of pairwise independent complex sub-Gaussian random variables on a probability space $(\Omega,\mathbb{P},\mathcal{G})$. 
	The sub-Gaussian variables will be of $0$-mean and unitary variance. We recall that, given a
	$0$-mean  random variable $X$, we say that $X$ is sub-Gaussian (more precisely, $b$-sub-Gaussian) if, for any $t \in \mathbb{R}$,
	\begin{align}
		\mathbb{E}e^{tX} \leq e^{b^2t^2/2}, \qquad b >0.
		\label{SG_laplace}
	\end{align}
	This characterization of sub-Gaussianity is equivalent to the existence of some $c > 0$ (related to $b$ by $c = \alpha/b^2$ for some constant $\alpha > 0$) such that, for any $\lambda > 0$,
	\begin{align}
		\mathbb{P}(|X| > \lambda) \leq 2 e^{-c\lambda^2};
		\label{SG_tail}
	\end{align}
	and to the existence of some $C>0$ such that, for any $r \geq 1$,
	\begin{align}
		\|X\|_{L^r_w(\Omega)} = (\mathbb{E}|X|^r)^{1/r} \leq C \sqrt{r} b.
		\label{SG_IneqMomentum}
	\end{align}
	In our case $b$ will be the same for all the sub-Gaussian variables considered above. 
	
	We will need the following.
	\begin{lemma}[\cite{BurqTzvet}]
		Let $d \in \mathbb{N}$, $(a_n) \in \l^2(\mathbb{Z}^d)$ and $(g^w_n)$ a sequence of complex, pairwise independent, random sub-Gaussian variables (all of them $b$-sub-Gaussian for some $b > 0$). Then, the random variable
		\begin{align}
			w \mapsto \sum_{n \in \mathbb{Z}^d} a_n g^w_n
			\label{sumSGrv}
		\end{align}
		is sub-Gaussian. In particular, there exists $C>0$ such that, for every $r \geq 1$,
		\begin{align}
			\| \sum_{n \in \mathbb{Z}^d} a_n g^w_n\|_{L^r} \leq C \sqrt{r} \|(a_n)\|_{\l^2(\mathbb{Z}^d)}.
		\end{align}
		\label{lemma:SG_KeyIneq}
	\end{lemma}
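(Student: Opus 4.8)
The plan is to prove the quantitative bound $\|\sum_n a_n g_n^w\|_{L^r(\Omega)} \le C\sqrt{r}\,\|(a_n)\|_{\ell^2(\mathbb{Z}^d)}$; the sub-Gaussianity of $w\mapsto\sum_n a_n g_n^w$ then follows automatically, since growth of the $L^r$ norms like $\sqrt{r}$ is equivalent to \eqref{SG_tail} (the converse direction of the chain of equivalences \eqref{SG_laplace}--\eqref{SG_IneqMomentum}). First I would reduce to finite sums: set $S_M^w := \sum_{|n|\le M} a_n g_n^w$. Using $\mathbb{E} g_n^w = 0$, pairwise independence and unit variance we get $\mathbb{E}[g_n^w\overline{g_m^w}] = \delta_{nm}$, hence $\mathbb{E}|S_{M'}^w - S_M^w|^2 = \sum_{M<|n|\le M'} |a_n|^2$, so $(S_M^w)_M$ is Cauchy in $L^2(\Omega)$ and converges there (and a.s.\ along a subsequence) to some $S^w$. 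It therefore suffices to prove $\|S_M^w\|_{L^r(\Omega)} \le C\sqrt{r}\,\|(a_n)\|_{\ell^2}$ \emph{uniformly in $M$} and then let $M\to\infty$ using Fatou's lemma.

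For the uniform bound I would pass through the real and imaginary parts, writing $S_M^w = \Re S_M^w + i\,\Im S_M^w$ with $\Re S_M^w = \sum_{|n|\le M}\Re(a_n g_n^w)$ and similarly for the imaginary part. For each fixed $n$, the real-valued variable $X_n := \Re(a_n g_n^w)$ has mean zero and satisfies $|X_n|\le |a_n|\,|g_n^w|$, so the tail bound \eqref{SG_tail} for $g_n^w$ gives $\mathbb{P}(|X_n|>\lambda)\le 2e^{-c\lambda^2/|a_n|^2}$; equivalently, $\mathbb{E} e^{tX_n}\le e^{c'|a_n|^2 t^2}$. Since the $g_n^w$ (hence the $X_n$) are independent across $n$, the Laplace transform factorizes:
\[
\mathbb{E} e^{t\,\Re S_M^w} = \prod_{|n|\le M}\mathbb{E} e^{tX_n} \le \exp\Bigl(c' t^2\sum_{|n|\le M}|a_n|^2\Bigr)\le \exp\bigl(c'\,t^2\,\|(a_n)\|_{\ell^2}^2\bigr),
\]
so $\Re S_M^w$ is sub-Gaussian with parameter $\lesssim \|(a_n)\|_{\ell^2}$ uniformly in $M$, and likewise $\Im S_M^w$. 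Applying \eqref{SG_IneqMomentum} to each part and using $\|S_M^w\|_{L^r}\le \|\Re S_M^w\|_{L^r} + \|\Im S_M^w\|_{L^r}$ yields $\|S_M^w\|_{L^r(\Omega)}\le C\sqrt{r}\,\|(a_n)\|_{\ell^2}$ for all $r\ge 1$ (the case $1\le r\le 2$ being trivial by Hölder once $C\ge 1$), uniformly in $M$.

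Letting $M\to\infty$ and invoking Fatou, the limit $S^w=\sum_n a_n g_n^w$ inherits $\|S^w\|_{L^r(\Omega)}\le C\sqrt{r}\,\|(a_n)\|_{\ell^2}$ for every $r\ge 1$, which is the asserted estimate; the implication \eqref{SG_IneqMomentum}$\Rightarrow$\eqref{SG_tail} then shows $S^w$ is sub-Gaussian. This argument is entirely soft, so there is no serious obstacle; the only two points that require attention are that the Laplace-transform factorization across $n$ is exactly where independence is used, and that it produces the parameter $\|(a_n)\|_{\ell^2}$ rather than $\|(a_n)\|_{\ell^1}$ — so one must be careful to run the estimate on $\Re(a_n g_n^w)$ as a single sub-Gaussian variable rather than, say, expanding $a_n$ into real and imaginary pieces and losing the $\ell^2$ scaling — and that the passage from the finite sums $S_M^w$ to $S^w$ relies on having the bound \emph{uniformly} in $M$ before taking the limit.
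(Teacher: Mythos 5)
Your argument is, in structure, the classical Burq--Tzvetkov proof (the paper gives no proof of its own and simply cites \cite{BurqTzvet}): reduce to finite sums, pass to real and imaginary parts, bound the exponential moment of each $X_n=\Re(a_ng_n^w)$ using the sub-Gaussian tail together with mean zero, factorize the Laplace transform, read off the $\sqrt r$ growth of the $L^r$ norms via \eqref{SG_IneqMomentum}, and let $M\to\infty$ by Fatou. The $L^2$-Cauchy reduction, the tail-to-MGF passage for $X_n$, the treatment of $1\le r\le 2$ by H\"older, and the Fatou limit are all fine.

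There is, however, one step you pass over that is a genuine gap with respect to the stated hypotheses. You write ``Since the $g_n^w$ (hence the $X_n$) are independent across $n$, the Laplace transform factorizes,'' but the lemma only assumes the $g_n^w$ are \emph{pairwise} independent. The identity $\mathbb{E}\,e^{t\sum_n X_n}=\prod_n\mathbb{E}\,e^{tX_n}$ requires \emph{mutual} independence; pairwise independence only gives $\mathbb{E}[X_nX_m]=0$ for $n\ne m$, which is exactly what powers your $L^2$-Cauchy step and nothing stronger. Worse, the conclusion is actually false under pairwise independence alone: taking $k$ i.i.d. Rademacher signs $\varepsilon_1,\dots,\varepsilon_k$ and the $2^k-1$ pairwise independent products $X_S=\prod_{i\in S}\varepsilon_i$ over $\emptyset\ne S\subseteq\{1,\dots,k\}$ (these are bounded, mean zero, unit variance, and uniformly sub-Gaussian), the normalized sum $(2^k-1)^{-1/2}\sum_S X_S$ has unit $L^2$ norm but equals $\sqrt{2^k-1}$ with probability $2^{-k}$, so $\|\,\cdot\,\|_{L^r}\sim (2^k-1)^{1/2}2^{-k/r}$, which at $r\sim k$ is $\sim 2^{k/2}$ rather than $\sim\sqrt{k}$; no universal constant $C$ in the stated inequality can survive letting $k\to\infty$. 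So the factorization is precisely where mutual independence enters, and you should say explicitly that you are reading the hypothesis as ``(mutually) independent'' --- which is what Burq--Tzvetkov actually assume, and what holds for the randomizations \eqref{RandomInitialData} used here, but not what the lemma literally says.
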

	
	Now we provide improved dispersive estimates for the linear flow evolving by randomized initial data.
	These results are standard, however we sketch the proof for the reader's convenience.
	We recall the Bernstein's inequality on $\mathbb{T}^d$:
	\begin{align}
		\| \langle D \rangle^s P_N f \|_{L^q(\mathbb{T}^d)} \lesssim N^{s + d(\frac{1}{p} - \frac{1}{q})} \|P_Nf\|_{L^p(\mathbb{T}^d)}, \quad 1 \leq p \leq q \leq \infty.
		\label{BernsteinIneq}
	\end{align}
	\begin{lemma}
		Let $\lambda, \sigma, s, t \in \mathbb{R} $, $p \in [1,\infty)$, $d \in \mathbb{N}$ and $N \in \mathbb{N}$. Then for all $\rho \in (0,d)$
		\begin{align*}
			&	\mathbb{P} [ \| P_N \langle D \rangle^s e^{\pm i t \langle D \rangle} f^w \|_{L^p_x(\mathbb{T}^{d})} > \lambda ] \lesssim e^{-c \lambda^2 N^{2(\sigma-s)}/(p \|f\|^2_{H^{\sigma}(\mathbb{T}^d)})}. 
			\\
			&	 \mathbb{P} [ \| P_N \langle D \rangle^s e^{\pm i t \langle D \rangle} f^w \|_{L^{\infty}_x(\mathbb{T}^{d})} > \lambda ]   \lesssim   e^{-c \lambda^2 N^{2(\sigma-s) - \rho} \rho /(d \|f\|^2_{H^{\sigma}(\mathbb{T}^d)})},
		\end{align*}
		for some $c > 0$. 
		In other words, given $\beta \in (0,1)$ we have that
		\begin{align*}
			&\| P_N \langle D \rangle^s e^{\pm i t \langle D \rangle} f^w \|_{L^p_x(\mathbb{T}^{d})} \lesssim (-p \log(\beta))^{1/2} N^{s-\sigma}\|f\|_{H^{\sigma}(\mathbb{T}^d)}, \nonumber\\
			&\| P_N \langle D \rangle^s e^{\pm i t \langle D \rangle} f^w \|_{L^{\infty}_x(\mathbb{T}^{d})}  \lesssim  (-d \rho^{-1} \log(\beta))^{1/2} N^{s-\sigma + \frac{\rho}{2}} \|f\|_{H^{\sigma}(\mathbb{T}^d)}
		\end{align*}
		with a probability of $1 - \beta$.
		\label{DispersiveProbEst}
	\end{lemma}
	\begin{proof}
		Note that $P_N \langle D \rangle^s e^{\pm i t \langle D \rangle} f^w$ is of the form
		\begin{align*}
			\sum_{n \in \mathbb{Z}^d} a_n g^w_n, 
		\end{align*}
		with 
		\begin{align*}
			a_n = \chi_N(n) \hat{f}(n) \langle n \rangle^s e^{inx \pm i t \langle n\rangle} ,
		\end{align*}
		where $\chi_N$ is the characteristic function of the set $\{n\in \mathbb{Z}^d : \frac{N}{2} \leq |n| \leq N\}$ in the frequency space. Let $r \geq 1$. We start considering $r \geq p$. By Minkowski inequality and Lemma \ref{lemma:SG_KeyIneq}, omitting irrelevant constants,
		\begin{align*}
			&\|  \|P_N \langle D \rangle^s e^{\pm i t \langle D \rangle} f^w\|_{L^p_x(\mathbb{T}^d)} \|_{L^r_w(\Omega)} \leq \|  \|P_N \langle D \rangle^s e^{\pm i t \langle D \rangle} f^w\|_{L^r_w(\Omega)} \|_{L^p_x(\mathbb{T}^d)} \nonumber\\
			& \lesssim C \sqrt{r} \| (a_n) \|_{\l^2 (\mathbb{Z}^d)} = C \sqrt{r} \left( \sum_{\frac{N}{2} \leq |n| \leq N} \langle n \rangle^{2s} |\hat{f}(n)|^2 \right)^{1/2} \nonumber\\
			&= C \sqrt{r} \left( \sum_{\frac{N}{2} \leq |n| \leq N} \langle n \rangle^{2(s-\sigma)} \langle n \rangle^{2\sigma} |\hat{f}(n)|^2 \right)^{1/2}  \lesssim C \sqrt{r} N^{-(\sigma-s)} \|f\|_{H^{\sigma}(\mathbb{T}^d)}.
		\end{align*}
		If $r < p$ it suffices to use Hölder's inequality in $L^r_w(\Omega)$ and follow an analogous procedure to obtain
		\begin{align*}
		&\|  \|P_N \langle D \rangle^s e^{\pm i t \langle D \rangle} f^w\|_{L^p_x(\mathbb{T}^d)} \|_{L^r_w(\Omega)} \leq &\|  \|P_N \langle D \rangle^s e^{\pm i t \langle D \rangle} f^w\|_{L^p_x(\mathbb{T}^d)} \|_{L^p_w(\Omega)} \nonumber \\
		&\lesssim C \sqrt{p} N^{-(\sigma - s)} \|f\|_{H^{\sigma}(\mathbb{T}^d)}.
		\end{align*}
		Thus, for any $r \geq 1$ we have that
		\begin{align*}
		&\|  \|P_N \langle D \rangle^s e^{\pm i t \langle D \rangle} f^w\|_{L^p_x(\mathbb{T}^d)} \|_{L^r_w(\Omega)} \lesssim C \max_{1 \leq r < p}\left\{ 1 , \sqrt{\frac{p}{r}} \right\} \sqrt{r} N^{-(\sigma - s)} \|f\|_{H^{\sigma}(\mathbb{T}^d)} \nonumber \\
		&= C \sqrt{rp} N^{-(\sigma - s)} \|f\|_{H^{\sigma}(\mathbb{T}^d)}.
		\end{align*}
		 We have just proved that
		\begin{align*}
			w \mapsto \| P_N \langle D \rangle^s e^{\pm i t \langle D \rangle} f^w\|_{L^p_x(\mathbb{T}^d)}
		\end{align*}
		is a sub-Gaussian random variable satisfying
		\begin{align*}
			\mathbb{P}[ \|P_N \langle D \rangle^s e^{\pm i t \langle D \rangle} f^w\|_{L^p_x(\mathbb{T}^d)} > \lambda] \lesssim e^{-c\lambda^2 N^{2(\sigma-s)}/(p\|f\|_{H^{\sigma}(\mathbb{T}^d)}^2)}
		\end{align*}
		for $c>0$ some constant independent of $N$. By Bernstein's inequality \eqref{BernsteinIneq}, considering any $R > 1$ and using the same argument,
		\begin{align*}
			&\|  \|P_N \langle D \rangle^s e^{\pm i t \langle D \rangle} f^w\|_{L^{\infty}_x(\mathbb{T}^d)} \|_{L^r_w(\Omega)} \lesssim N^{d/R} \|  \|P_N \langle D \rangle^s e^{\pm i t \langle D \rangle} f^w\|_{L^R_x(\mathbb{T}^d)} \|_{L^r_w(\Omega)} \nonumber\\
			& \lesssim N^{d/R} C \sqrt{r R} N^{-(\sigma - s)}\|f\|_{H^{\sigma}(\mathbb{T}^d)},
		\end{align*}
		so that 
		\begin{align*}
			\mathbb{P}[ \|P_N \langle D \rangle^s e^{\pm i t \langle D \rangle} f^w\|_{L^{\infty}_x(\mathbb{T}^d)} > \lambda] \lesssim e^{-c \lambda^2 N^{2(\sigma-s) - \frac{2d}{R}}/(R \|f\|_{H^{\sigma}(\mathbb{T}^d)}^2)}
		\end{align*}
		where $R$ is as large as desired. For the last part, regarding the $p < \infty$ case, it suffices to take 
		\begin{align*}
			\beta = e^{-c \lambda^2 N^{2(\sigma-s)}/(p\|f\|_{H^{\sigma}(\mathbb{T}^d)}^2)} \Longleftrightarrow \lambda = (-c^{-1}p \log(\beta))^{1/2} N^{s-\sigma}\|f\|_{H^{\sigma}(\mathbb{T}^d)},
		\end{align*}
		and regarding $p = \infty$,
		\begin{align*}
			\beta = e^{-c \lambda^2 N^{2(\sigma-s) - \frac{2d}{R}}/(R\|f\|_{H^{\sigma}(\mathbb{T}^d)}^2)} \Longleftrightarrow \lambda = (-c^{-1} R \log(\beta))^{1/2} N^{s-\sigma + \frac{d}{R}}\|f\|_{H^{\sigma}(\mathbb{T}^d)}.
		\end{align*}
		This concludes the proof once we define $\rho = 2d / R$.
	\end{proof}
	In an almost identical way, we provide probabilistic Strichartz estimates.
	\begin{lemma}
		Let $\lambda, \sigma, s, t \in \mathbb{R} $, $p \in [1,\infty)$, $d \in \mathbb{N}$ and $N \in \mathbb{N}$. Then for all $\rho \in (0,d)$
		\begin{align*}
			&\mathbb{P} [ \| P_N \langle D \rangle^s e^{\pm i t \langle D \rangle} f^w \|_{L^p_{t,x}([0,1] \times \mathbb{T}^{d})} > \lambda ] \lesssim e^{-c \lambda^2 N^{2(\sigma-s)}/(p \|f\|^2_{H^{\sigma}(\mathbb{T}^d)})}, \nonumber\\
			& \mathbb{P} [ \| P_N \langle D \rangle^s e^{\pm i t \langle D \rangle} f^w \|_{L^{\infty}_{t,x}([0,1] \times \mathbb{T}^{d})} > \lambda ] \lesssim e^{-c \lambda^2 N^{2(\sigma-s) - \rho} \rho /(d \|f\|^2_{H^{\sigma}(\mathbb{T}^d)})},
		\end{align*}
		for some $c > 0$. 
		In other words, given $\beta \in (0,1)$ we have that
		\begin{align*}
			&\| P_N \langle D \rangle^s e^{\pm i t \langle D \rangle} f^w \|_{L^p_{t,x}([0,1] \times \mathbb{T}^{d})} \lesssim (-p \log(\beta))^{1/2} N^{s-\sigma}\|f\|_{H^{\sigma}(\mathbb{T}^d)}, \nonumber\\
			&\| P_N \langle D \rangle^s e^{\pm i t \langle D \rangle} f^w \|_{L^{\infty}_{t,x}([0,1] \times \mathbb{T}^{d})}  \lesssim  (-d \rho^{-1}\log(\beta))^{1/2} N^{s-\sigma + \frac{\rho}{2}}\|f\|_{H^{\sigma}(\mathbb{T}^d)}
		\end{align*}
		with a probability of $1 - \beta$.
		\label{StrichartzProbEst}
	\end{lemma}
	
	A first consequence of Lemmata \ref{StrichartzProbEst} and \ref{DispersiveProbEst} is that, as long as $s < \sigma$, then $e^{\pm i t \langle D \rangle}f^w \in W^{s,p}(\mathbb{T}^d)$ $w$-almost surely, for all $p \in [1, \infty)$. Indeed, considering $\beta \in (0,1)$ and $N$ to be dyadic scales, then for any $p \in [1,\infty)$ 
	\begin{align}
		&\| e^{\pm i t \langle D \rangle} f^w \|_{W^{s,p}(\mathbb{T}^d)} \leq \sum_N \|P_N \langle D \rangle^s e^{\pm i t \langle D \rangle} f^w\|_{L^p_x(\mathbb{T}^d)} \nonumber\\
		&\lesssim \sqrt{p} \|f\|_{H^{\sigma}(\mathbb{T}^d)} \sum_{k \in \mathbb{N}} (-\log(\beta_k))^{1/2} 2^{-k(\sigma-s)}
		\label{eq:fwGralProbBound}
	\end{align}
	with probability $1 - \beta_k$ for each of the terms. We need to select $(\beta_k)_{k \in \mathbb{N}}$ such that
	\begin{align}\label{eq:keysum-stickingprobs}
	\sum_{k \in \mathbb{N}} \beta_k \leq \beta \text{ and } \sum_{k \in \mathbb{N}} (-\log(\beta_k))^{1/2} 2^{-k(\sigma-s)} \lesssim (-\log(\beta))^{1/2}. 
	\end{align}
	It suffices to take $\beta_k = 2^{-k}\beta$.	Therefore, it can be proved that, for any $t \in \mathbb{R}$ and $w$-almost surely,
	\begin{align*}
		e^{\pm i t \langle D \rangle} f^w \in \bigcap_{s < \sigma} W^{s,p}(\mathbb{T}^d).
	\end{align*}
	Similarly, using the $L^\infty$ estimate from Lemma \ref{DispersiveProbEst} we have, for any $t \in \mathbb{R}$ and $w$-almost surely,
	\begin{align*}
		e^{\pm i t \langle D \rangle} f^w \in \bigcap_{s < \sigma} C^s(\mathbb{T}^d).
	\end{align*}
	We also give some additional bounds that will be useful later. Let $N \in \mathbb{N}$ and consider the same values of $\beta_k$, $s < \sigma$ and \eqref{eq:keysum-stickingprobs}. If $N > \frac{1}{\beta}$, then
	\begin{align*}
	\| P_{> N} e^{it \langle D \rangle} f^w\|_{W^{s,p}(\mathbb{T}^d)} \lesssim \sqrt{p} \|f\|_{H^{\sigma}(\mathbb{T}^d)} \sum_{k\in \mathbb{N}, k > \log_2(N)} \left(k\log\left(2\right)\right)^{1/2} 2^{-k(\sigma-s)}.
	\end{align*}
	with probability $\geq 1-\beta$. Let $\varepsilon > 0$. The last sum is convergent, so there exists some $N_0 \in \mathbb{N}$ such that, if $N > N_0$, then it will be less than $\frac{\varepsilon C}{\sqrt{p} \|f\|_{H^{\sigma}(\mathbb{T}^d)}}$ for $C > 0$ some suitable constant. All in all, if $N > \max\{N_0,\frac{1}{\beta}\}$ we have that
	\begin{align*}
	\| P_{> N} e^{it \langle D \rangle} f^w\|_{W^{s,p}(\mathbb{T}^d)} < \varepsilon
	\end{align*}
	with probability $\geq 1 - \beta$. Applying Bernstein's inequality, we equally obtain that there exists some $N_1 \in \mathbb{N}$ such that, if $N > \max\{N_1,\frac{1}{\beta}\}$, we have
	\begin{align}\label{eq:probcontrol-largefreq-inf}
	\| P_{> N} e^{it \langle D \rangle} f^w\|_{W^{s,\infty}(\mathbb{T}^d)} < \varepsilon
	\end{align}
	with probability $\geq 1-\beta$.
	
	The next result provides $w$-a.s. uniform convergence for the linear flow.
	\begin{lemma}[Uniform convergence a.s. for the linear Klein-Gordon equation in $\mathbb{T}^d$] 
		Let $d \in \mathbb{N}$, $\sigma > 0$ and $f^w$ the randomized initial datum from \eqref{RandomIDdec}, given $f \in H^{\sigma}(\mathbb{T}^d)$. Then, 
		$w$-almost surely
		\begin{align*}
			\sup_{x \in \mathbb{T}^d} |e^{\pm i t \langle D \rangle}f^w(x) - f^w(x)| \rightarrow 0, \text{ as } t \rightarrow 0.
		\end{align*}
		\label{RandomUniformPCL}
	\end{lemma}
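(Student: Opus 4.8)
The plan is to reduce the statement to a quantitative rate of convergence on each Littlewood-Paley piece, and then sum. Writing $f^w = \sum_N P_N f^w$ (dyadic $N$), we have
\[
\sup_{x} |e^{\pm i t \langle D \rangle} f^w(x) - f^w(x)| \leq \sum_N \sup_x |(e^{\pm i t \langle D \rangle} - \mathrm{Id}) P_N f^w(x)|.
\]
For the high-frequency tail we simply bound $\sup_x |(e^{\pm it\langle D\rangle} - \mathrm{Id})P_N f^w| \le 2 \|P_N f^w\|_{L^\infty_x}$, which, by the $L^\infty$ estimate of Lemma \ref{DispersiveProbEst} applied with $s=0$, is $\lesssim (-\log\beta)^{1/2} N^{-\sigma + \rho/2}\|f\|_{H^\sigma}$ with probability $1-\beta$; choosing $\rho < 2\sigma$ this is summable in $N$, so the tail $\sum_{N > M}$ is $w$-a.s. as small as we like uniformly in $t$. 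For the low-frequency part, on $P_N f^w$ the symbol difference satisfies $|e^{\pm i t \langle n \rangle} - 1| \leq t \langle n \rangle \lesssim t N$ for $|n| \sim N$, so writing $(e^{\pm it\langle D\rangle}-\mathrm{Id})P_N f^w = \langle D\rangle (e^{\pm it\langle D\rangle}-\mathrm{Id})\langle D\rangle^{-1} P_N f^w$ and using that $\tfrac{e^{\pm it\langle n\rangle}-1}{\langle n\rangle}$ has $L^\infty_n$ norm $\lesssim t$, one gets via the same sub-Gaussian argument as in Lemma \ref{DispersiveProbEst} (Minkowski in $L^r_\omega$, Lemma \ref{lemma:SG_KeyIneq}, then Bernstein to pass to $L^\infty_x$) a bound
\[
\sup_x |(e^{\pm it\langle D\rangle}-\mathrm{Id})P_N f^w(x)| \lesssim t\, N^{1-\sigma+\rho/2}(-\log\beta)^{1/2}\|f\|_{H^\sigma}
\]
with probability $1-\beta$.

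Now I would make the quantitative choices explicit. Fix $\varepsilon>0$. Choose $M = M(\varepsilon)$ so large that the high-frequency tail (which is $w$-a.s. finite, by Borel–Cantelli applied to the summable probabilities $\beta_N = e^{-cN^{\rho'}}$) is $< \varepsilon/2$; this $M$ depends on $w$ but not on $t$. Then for the finitely many scales $N \le M$, the low-frequency bound gives $\sum_{N \le M} \sup_x |(e^{\pm it\langle D\rangle}-\mathrm{Id})P_N f^w| \lesssim_w t \cdot M^{1-\sigma+\rho/2}$, which is $< \varepsilon/2$ once $t$ is small enough (depending on $w$ and $\varepsilon$). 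Combining, $\sup_x |e^{\pm it\langle D\rangle}f^w(x)-f^w(x)| < \varepsilon$ for $t$ small, which is the claim. To make the a.s. statement clean, I would fix a sequence $\beta_k \to 0$ with $\sum_k \beta_k < \infty$ and intersect the full-measure events coming from Borel–Cantelli over a countable collection.

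The main obstacle — really the only delicate point — is bookkeeping the two competing scales correctly: the high-frequency cutoff $M$ must be chosen \emph{before} $t$, using only the $w$-a.s. summability of the tail (so $M$ may depend on $w$), while the low-frequency error carries a factor $tN$ that is harmless only because $N$ ranges over the \emph{finite} set $\{N \le M\}$. One must also confirm that the sub-Gaussian/Bernstein machinery of Lemma \ref{DispersiveProbEst} goes through verbatim when the Fourier multiplier $\langle n\rangle^s e^{\pm it|n|}$ is replaced by $\tfrac{e^{\pm it\langle n\rangle}-1}{\langle n\rangle}\langle n\rangle^{1+s}$ — it does, since only the uniform-in-$n$ size of the multiplier on the dyadic shell enters, and that size is $\lesssim t N^{1+s-\sigma}$ times $\ell^2$-normalization. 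I expect no genuine difficulty beyond this, since everything is linear and the randomization only improves integrability.
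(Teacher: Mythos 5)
Your overall strategy matches the paper's: split into high and low frequencies, control the high‑frequency tail by probabilistic $L^\infty$ estimates, control the low‑frequency part by the mean value theorem applied to the symbol $e^{\pm it\langle n\rangle}-1$, and conclude by Borel--Cantelli. The only structural difference is organizational: you work with the full Littlewood--Paley decomposition, whereas the paper performs a single split $P_{\le N_k}/P_{>N_k}$ at a $k$-dependent threshold and handles the low frequencies by Cauchy--Schwarz plus a probabilistic $L^2_x$ bound rather than shell by shell. Both routes lead to the same conclusion.

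There is, however, one genuinely wrong step in your writeup: the inequality $\sup_x|(e^{\pm it\langle D\rangle}-\mathrm{Id})P_N f^w|\le 2\|P_Nf^w\|_{L^\infty_x}$ is false, since $e^{\pm it\langle D\rangle}$ is not bounded on $L^\infty(\mathbb{T}^3)$, even after frequency localization to a dyadic shell (the kernel of $e^{\pm it\langle D\rangle}P_N$ has $L^1$ norm growing in $N$). This matters because you use this inequality to read off, for free, that the tail bound is uniform in $t$, and that is precisely the inference that fails. The repair is to bound $\|P_N f^w\|_{L^\infty_x}$ and $\|P_N e^{\pm it\langle D\rangle}f^w\|_{L^\infty_{t,x}([0,1]\times\mathbb{T}^3)}$ separately, invoking for the latter the $L^\infty_{t,x}$ probabilistic Strichartz estimate of Lemma~\ref{StrichartzProbEst} rather than the fixed-$t$ dispersive estimate of Lemma~\ref{DispersiveProbEst}; this yields a single exceptional set per scale $N$ off which the bound holds for all $t\in[0,1]$, which is what your Borel--Cantelli step actually requires. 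With that substitution the argument is sound. (For comparison, the paper's proof cites \eqref{PmoreNinfty} --- which is pointwise in $t$ --- for the term $\|P_{>N_k}e^{\pm it\langle D\rangle}f^w\|_{L^\infty_x}$ where a uniform-in-$t$ bound is needed, so the $t$-uniformity is a point both arguments could be more explicit about; but your version asserts an incorrect deterministic inequality, which should be corrected.)
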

	\begin{proof}
	Let $\varepsilon > 0$, and define the event
		\begin{align*}
			A_{\varepsilon} = \{w \in \Omega : \limsup_{t \rightarrow 0} \| e^{\pm i t \langle D \rangle}f^w - f^w\|_{L^{\infty}_x(\mathbb{T}^d)} > \varepsilon \}.
		\end{align*}
		Note that if we prove that $\mathbb{P}(A_{\varepsilon}) = 0$ for any $\varepsilon > 0$, then we would be done. Given $N \in \mathbb{N}$, decompose
		\begin{align*}
			|e^{\pm i t \langle D \rangle}f^w - f^w| \leq |P_{>N}f^w| + |P_{>N}e^{\pm i t \langle D \rangle}f^w| + |P_{\leq N} (e^{\pm i t \langle D \rangle}f^w - f^w)|. 
		\end{align*}
		Let $\beta \in (0,1)$. Regarding the first two terms, from \eqref{eq:probcontrol-largefreq-inf} there exists some $N_{\varepsilon} \in \mathbb{N}$ such that, if $N > \max\{N_{\varepsilon},\frac{2}{\beta}\}$, then
		\begin{align*}
		\| P_{>N}e^{\pm i t \langle D \rangle}f^w\|_{L^{\infty}_x(\mathbb{T}^d)} < \frac{\varepsilon}{3}
		\end{align*}
		for any $t \in \mathbb{R}$ with probability $ \geq 1 - \frac{\beta}{2}$.
		\\		
		Regarding the last term, 
		\begin{align}
			&\| P_{\leq N} (e^{\pm i t \langle D \rangle}f^w - f^w) \|_{L^{\infty}_x(\mathbb{T}^d)} = \left\| \sum_{|n| \leq N} (e^{\pm i t \langle n \rangle} - 1) e^{inx} \widehat{f^w}(n) \frac{\langle n \rangle^{s^*}}{\langle n \rangle^{s^*}} \right\|_ {L^{\infty}_x(\mathbb{T}^d)} \nonumber \\
			&\leq \sup_{|n| \leq N} |e^{\pm i t \langle n \rangle} - 1| \left( \sum_{n \in \mathbb{Z}^d} \langle n \rangle^{-2s^*} \right)^{1/2} \left( \sum_{|n| \leq N} \langle n \rangle^{2s^*} |\widehat{f^w}(n)|^2 \right)^{1/2} \nonumber\\
			&\lesssim_{s^*} |t|N^{s^*+1} \|P_{\leq N} f^w\|_{L^2_x(\mathbb{T}^d)},
			\label{eq:Int1_UniformLinearConv}
		\end{align}
		where we applied Cauchy-Schwarz in the first inequality, the mean value theorem in the second one, and we took $s^* > d/2$ in order to have convergence of the series of general term $\langle n \rangle^{-2s^*}$. From \eqref{eq:fwGralProbBound} we know that 
$\|P_{\leq N} f^w\|_{L^2_x(\mathbb{T}^d)}$ is $w$-a.s. finite independently of $N$. Then
		\begin{align*}
		\limsup_{t \rightarrow 0} |t|N^{s^*+1} \|P_{\leq N} f^w\|_{L^2_x(\mathbb{T}^d)} = 0
		\end{align*}
		for almost every $w$. All in all, 
		\begin{align*}
		&\mathbb{P}(A_{\varepsilon}) \leq \mathbb{P}(w \in \Omega : \limsup_{t \rightarrow 0}\| P_{>N}e^{\pm i t \langle D \rangle}f^w\|_{L^{\infty}_x(\mathbb{T}^d)} > \frac{\varepsilon}{3}) \nonumber \\
		& + \mathbb{P}(w \in \Omega :\| P_{>N}f^w\|_{L^{\infty}_x(\mathbb{T}^d)} > \frac{\varepsilon}{3}) \leq \beta
		\end{align*}
		for any $\beta \in (0,1)$. Since $\beta$ can be taken as close to $0$ as wanted, then $\mathbb{P}(A_{\varepsilon}) = 0.$
	\end{proof}
	
	
	We now construct solutions with initial data randomized as in \eqref{RandomIDdec}, starting by initial 
	data in $H^{\sigma}(\mathbb{T}^3) \times H^{\sigma}(\mathbb{T}^3)$, with $\sigma > 0$.  Thus we adapt the result from \cite{BurqTzvet}
	to the restriction space framework.
	
	\begin{theorem}\label{Prop:BT}
		Let	$\frac12 < b < s < 1$, $\sigma \in (0, s)$ and $f = (f_+,f_-) \in H^{\sigma}(\mathbb{T}^3) \times H^{\sigma}(\mathbb{T}^3)$. We denote with $f^w = (f^w_+,f^w_-)$ the corresponding randomization defined as in \eqref{RandomIDdec}, and $\Lambda = \max \{\| f_+ \|_{H^{\sigma}(\mathbb{T}^3)},\| f_- \|_{H^{\sigma}(\mathbb{T}^3)}\}$. Consider the Cauchy problem
		\begin{equation}
			\left\{
			\begin{array}{l}
				(\partial_t + i\langle D \rangle)u^w_+ = i\langle D \rangle^{-1} \left( \frac{u^w_+ + u^w_-}{2} \right)^3, \quad u^w_+(x,0) = f^w_+(x)
				\\
				(\partial_t - i\langle D \rangle)u^w_- = - i\langle D \rangle^{-1} \left( \frac{u^w_+ + u^w_-}{2} \right)^3, \quad u^w_-(x,0) = f^w_-(x).
			\end{array}
			\right.
			\label{eq:CauchyProb}
		\end{equation}
		Let $\delta \in (0,1)$ be sufficiently small. Then, with a probability $\geq 1-e^{- \Lambda^{-2} \delta^{\frac13(\frac12 - s)}}$, the Cauchy problem \eqref{eq:CauchyProb} admits a unique solution $(u^w_+,u^w_-)$ in the unit ball of the Banach space $Y^{s,b,\sigma}_{\delta,+} \times Y^{s,b,\sigma}_{\delta,-}$, where
		\begin{align*}
			&Y^{s,b,\sigma}_{\delta,\pm} = \{ e^{\mp i t \langle D \rangle}f^w + h: h \in X^{s,b}_{\delta,\pm}  \}, \nonumber\\
			&\| e^{\mp i t \langle D \rangle}f^w + h \|_{Y^{s,b,\sigma}_{\delta,\pm}} =  \| h \|_{X^{s,b}_{\delta,\pm} }.
		\end{align*} 
		Moreover, \eqref{eq:CauchyProb} admits $w$-almost surely a $\delta_w \in (0,1)$ such that a unique solution $(u^w_+,u^w_-)$ exists in the unit ball of $Y^{s,b,\sigma}_{\delta_w,+} \times Y^{s,b,\sigma}_{\delta_w,-}$.	\label{prop:ProbLWP}
	\end{theorem}
	
		\begin{remark}
	We are interested in Theorem \ref{prop:ProbLWP} for $\sigma$ close to zero, in which case the statement implies (taking $s$ close to $1$) a $1 - \varepsilon$ smoothing for the Duhamel contribution. One can modify the argument to show that the same amount of smoothing can be obtained for larger values of $\sigma$, however this generalisation is not necessary for our purposes.  
	\end{remark}

	\begin{proof}
		Following \cite{BurqTzvet}, letting 
		$$
		u^{w}_{j} =  z^{w}_{j} + v_{j}, \qquad j \in  \{ +, - \},
		$$
		we rewrite \eqref{eq:CauchyProb} as
		\begin{align*}
			&(\partial_t \pm i \langle D\rangle)z^w_{\pm} = 0, \nonumber\\
			&(z^w_+(x,0),z^w_-(x,0)) = (f^w_+(x),f^w_-(x)), 
		\end{align*}
		and
		\begin{align*}
			&(\partial_t \pm i\langle D\rangle)v_{\pm} = \pm i \langle D \rangle \left( \frac{(z^w_++v_+) + (z^w_- + v_-)}{2}  \right)^3, \nonumber\\
			&(v_+(x,0),v_-(x,0)) = (0,0).
		\end{align*}
		Denoting $v= (v_{+}, v_{-} )$, 
		the proof reduces to proving that the map $\Gamma_{f^w}$ defined by
		\begin{align*} 
			&\Gamma_{f^w}(v) :=
			\begin{pmatrix} \Gamma_{f^w,+}(v) \\  \\ \Gamma_{f^w,-}(v) \\\end{pmatrix} :=
			\begin{pmatrix} i \eta(t) \int_0^t \frac{e^{-i(t-\tau)\langle D \rangle}}{\langle D \rangle} 
				\left( \frac{(z^w_+ + v_+) + (z^w_-+v_-)}{2}  \right)^3 d\tau \\  \\  - i \eta(t) \int_0^t \frac{e^{i(t-\tau)\langle D \rangle}}{\langle D \rangle} \left( \frac{(z^w_+ + v_+) + (z^w_- + v _-)}{2}  \right)^3 d\tau \\\end{pmatrix} 
		\end{align*}
		is a contraction on the ball $B(0,1)$ of $X^{s,b}_{\delta,+} \times X^{s,b}_{\delta,-}$ (with $\delta \in (0,1)$ small enough) except 
		for initial data in an exceptional set of measure $\leq e^{-\Lambda^{-2} \delta^{\frac13 (\frac12 - s)}}$. 
		This would provide the almost sure existence of a unique solution of the form
		$$(u^w_+,u^w_-) = (z^w_+ + v_+,z^w_-+v_-).$$ 
Indeed, given $\delta \in (0,1)$ sufficiently small denote by $A_{\delta}$ a subset of $\Omega$ for which
\begin{itemize}
\item given $w \in A_ {\delta}$ and the randomized initial data $f^w$, there exits a unique solution $(u^w_+,u^w_-)$ in the unit ball of $Y^{s,b,\sigma}_{\delta,+} \times Y^{s,b,\sigma}_{\delta,-}$ with $(u^w_+(x,0),u^w_-(x,0)) = (f^w_+(x),f^w_-(x))$,
\item $\mathbb{P}(A_\delta^c) \leq  e^{-\Lambda^{-2} \delta^{\frac13 (\frac12 - s)}}$.
\end{itemize}
Let
\begin{align*}
A = \bigcup_{n \in \mathbb{N}, n \geq n_0} A_{1/n}
\end{align*}
for $n_0$ large enough so that for any $w \in A$ there exists a unique solution in $Y^{s,b,\sigma}_{\delta_w,+} \times Y^{s,b,\sigma}_{\delta_w,-}$, for some $\delta_w > 0$. Moreover, for any $k \in \mathbb{N}$, $k \geq n_0$,
\begin{align*}
\mathbb{P}(A^c) = \mathbb{P}\left(\bigcap_{n \in \mathbb{N}, n\geq n_0} A_{1/n}^c \right) \leq \mathbb{P}(A_{1/k}^c) \leq  e^{-\Lambda^{-2} k^{\frac13 (s - \frac12)}},
\end{align*}
so taking $k \rightarrow \infty$ and recalling that $s > \frac{1}{2}$, we have that $\mathbb{P}(A) = 1$.\\

		Regarding the quantitative part of the statement, applying Lemma \ref{lemma:MainInt} and Hölder's inequality we get
		\begin{align*}
			&\| \Gamma_{f^w,\pm}(v) \|_{X^{s,b}_{\delta,\pm}} \lesssim \| (z^w_++v_+ + z^w_-+v_-)^3 \|_{L^{\frac{2}{2-s}}_{\delta}L^{\frac{2}{2-s}}} \nonumber\\
			& \lesssim \delta^{s-\frac{1}{2}} \| z^w_+ + z^w_- \|_{L^{\frac{2}{1-s}}_{\delta}L^{\frac{6}{2-s}}}^3 + \| (v_+ + v_-)^3 \|_{L^{\frac{2}{2-s}}_{\delta}L^{\frac{2}{2-s}}} = \delta^{s - \frac{1}{2}}  (i) + (ii).
		\end{align*}
		
		For $(i)$, denoting $l_s = \max\{\frac{2}{1-s},\frac{6}{2-s}\}$ and using \eqref{eq:fwGralProbBound} with $\beta$ small enough we get
		\begin{align}
			&(i) \lesssim \| z^w_+ + z^w_- \|_{L^{l_s}_{\delta} L^{l_s}}^3 \lesssim (\| f_+ \|_{H^{\sigma}(\mathbb{T}^3)}^3 + \| f_- \|_{H^{\sigma}(\mathbb{T}^3)}^3) (-\log(\beta))^{3/2} \nonumber\\
			&\leq  \Lambda^3 (-\log(\beta))^{3/2}  =:  \lambda^3
			\label{eq:IntEstPLWP_prob}
		\end{align}
		with probability $1-\beta$, for $\beta \in (0,1)$ (we are interested in small values of $\beta$). 
		
		For $(ii)$, we apply the Corollary \ref{cor:EmbXsbj} and \eqref{EstimateStrichartzRestriction}
		\begin{align*}
			&(ii) \lesssim  \delta^{s-\frac12} \| v_+ + v_- \|_{X^s_\delta}^3 \lesssim \delta^{s-\frac12} (\| v_+\|_{X^{s,b}_{\delta,+}} + \| v_-\|_{X^{s,b}_{\delta,-}})^3 
			= \delta^{s-\frac12} \| v \|_{X^{s,b}_{\delta,+} \times X^{s,b}_{\delta,-}}^3.
		\end{align*}
		This implies that
		\begin{align}
			\| \Gamma_{f^w,\pm}(v) \|_{X^{s,b}_{\delta,\pm}} \leq C \delta^{s - \frac12} ( \lambda^3 + 1 )
			\label{eq:IntEstPLWP_ball}
		\end{align}
		as long as $v \in B(0,1)$. 
		
		For the contractive property, we consider $v_1 = (v_{1,+},v_{1,-})$ and $v_2 = (v_{2,+},v_{2,-})$ in  $B(0,1)$ and do
		\begin{align*}
			&\| \Gamma_{f^w,\pm}(v_1) - \Gamma_{f^w,\pm}(v_2) \|_{X^{s,b}_{\delta,\pm}} = \left\| \eta(t) \int_0^t \frac{e^{\mp i(t-\tau)\langle D \rangle}}{\langle D \rangle} \left( \left( \frac{g^w_1}{2} \right)^3 - \left( \frac{g^w_2}{2} \right)^3  \right) d\tau \right\|_{X^{s,b}_{\delta,\pm}} \nonumber\\
			&= (iii),
		\end{align*}
		where
		\begin{align*}
			g^w_i  = z^w_+ + v_{i,+} + z^w_- + v_{i,-}, \quad i \in \{1,2\}.
		\end{align*}
		Recalling that $\frac12 < b < s < 1$, we can apply Lemma \ref{lemma:DetAbsorption}:
		\begin{align}
			&(iii) \leq C \delta^{s-\frac12} \| v_1 - v_2 \|_{X^{s,b}_{\delta,+} \times X^{s,b}_{\delta,-}}.
			\label{eq:IntEstPLWP_cont}
		\end{align}
		Thus the map is a contraction on $B(0,1)$ if
		\begin{enumerate}
			\item[(1)] Recall \eqref{eq:IntEstPLWP_ball} :
			\begin{align*}
				C \delta^{s-\frac12} ( \lambda^3 + 1 ) \leq 1.
			\end{align*}
			\item[(2)] Recall \eqref{eq:IntEstPLWP_cont} :
			\begin{align*}
				C \delta^{s - \frac12}  \leq \frac{1}{2}.
			\end{align*}
		\end{enumerate}
		Recalling $\Lambda^3 (-\log(\beta))^{3/2}  =:  \lambda^3$, we see that if we choose $\lambda = \delta^{\frac16(\frac12 - s)}$, then the conditions are satisfied for all sufficiently small $\delta >0$ as long as the initial data are outside an exceptional set of measure 
		$\leq e^{-\frac{\lambda^2}{\Lambda^2}} = e^{- \Lambda^{-2} \delta^{\frac13 (\frac12 - s)}}$, that completes the proof.
	\end{proof}
	\section{Proof of Theorem \ref{MainThm2}}\label{Sec:Proof2}
	We are now ready to prove our second main result, namely Theorem \ref{MainThm2}. This will require a combination 
	of the nonlinear smoothing effect of the Duhamel integral from Section \ref{Sec:DetPre} and of the probabilistic Strichartz estimates from Section \ref{Sec:ProbPre}. The maximal estimates framework developed in Section \ref{sec:TMATNPC} is crucial in order to minimize the amount of nonlinear smoothing that is necessary in order to prove the theorem with minimal regularity. In the same spirit of the deterministic case, we rewrite Theorem \ref{MainThm2} as follows
	\begin{theorem}
		Let  $\sigma > 0$ and $f = (f_+,f_-) \in H^{\sigma}(\mathbb{T}^3) \times H^{\sigma}(\mathbb{T}^3)$. Then, using the same notation than Theorem \ref{prop:ProbLWP},
		\begin{align*}
			\lim_{t \to 0}
			|\Phi^{\pm}_t f^w_{\pm}(t,x) - f^w_{\pm}(x)| = 0, \qquad \mbox{for almost every $x \in \mathbb{T}^3$},
		\end{align*}
		$w$-almost surely.
		\label{thm:Ppwc}
	\end{theorem}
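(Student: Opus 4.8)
The plan is to exploit the splitting $u^w_\pm = z^w_\pm + v_\pm$ used in the proof of Theorem~\ref{prop:ProbLWP}, where $z^w_\pm = e^{\mp i t \langle D \rangle} f^w_\pm$ is the free evolution and $v_\pm$ is the Duhamel term, which solves the nonlinear equation with vanishing initial datum and which, $w$-almost surely, belongs to $X^{s,b}_{\delta_w,\pm}$ for some random $\delta_w > 0$ and a fixed pair $\tfrac12 < b < s \leq \tfrac58$; the two summands will be treated by entirely different arguments. Fix $w$ in the intersection of the full-measure event on which such a $\delta_w$ exists (obtained from Theorem~\ref{prop:ProbLWP} by taking a union over dyadic values of $\delta$) and the full-measure event of Lemma~\ref{RandomUniformPCL}. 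Writing, componentwise,
$$
\Phi^{\pm}_t f^w_\pm(x) - f^w_\pm(x) = \big( e^{\mp i t \langle D \rangle} f^w_\pm(x) - f^w_\pm(x) \big) + v_\pm(t,x),
$$
it suffices to show that the bracket tends to $0$ as $t \to 0$ uniformly in $x \in \mathbb{T}^3$, and that $v_\pm(t,x) \to 0$ as $t \to 0$ for almost every $x \in \mathbb{T}^3$.

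The bracket is handled directly by Lemma~\ref{RandomUniformPCL}, which gives $\sup_{x \in \mathbb{T}^3} |e^{\mp i t \langle D \rangle} f^w_\pm(x) - f^w_\pm(x)| \to 0$. This is the conceptual key: the free evolution is never required to live in a restriction space, so it costs no regularity and lets us descend all the way to $\sigma > 0$. All the tension between ``$\sigma > 0$'' and ``$s > 1/2$'' is thereby isolated in the Duhamel term $v_\pm$, which, precisely because of the $\tfrac12$-smoothing built into Theorem~\ref{prop:ProbLWP}, does live in $X^{s,b}_{\delta_w,\pm}$ with $s, b > \tfrac12$.

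For $v_\pm$ I would rerun the argument of Lemma~\ref{lemma:AdaptMaxEst_NL}, but applied directly to $v_\pm$ rather than to a difference of flows. Since $b > \tfrac12$, the characterization \eqref{RestrictionSobolevChar} yields a continuous embedding $X^{s,b}_{\delta_w,\pm} \hookrightarrow C([0,\delta_w];H^s(\mathbb{T}^3))$, so for each fixed $N$ the frequency-localized piece $P_{\leq N} v_\pm$ lies in $C([0,\delta_w];C^0(\mathbb{T}^3))$, and since $v_\pm(\cdot,0) = 0$ we get $\sup_x |P_{\leq N} v_\pm(t,x)| \to 0$ as $t \to 0$. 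Hence for every $N$ and every $x$,
$$
\limsup_{t \to 0} |v_\pm(t,x)| \leq \sup_{0 < t < \delta_w} |P_{>N} v_\pm(t,x)|,
$$
so, by Markov's inequality and the maximal estimate \eqref{eq:MaxEstXsb} (i.e.\ the transference Lemma~\ref{TransferLemma} with $Y = L^2_x(\mathbb{T}^3;L^\infty_t(0,\delta_w))$) applied to $P_{>N} v_\pm$, for every $\lambda > 0$
$$
\big| \{ x \in \mathbb{T}^3 : \limsup_{t \to 0} |v_\pm(t,x)| > \lambda \} \big| \leq \frac{C}{\lambda^2}\, \| P_{>N} v_\pm \|^2_{X^{s,b}_{\delta_w,\pm}}.
$$
Letting $N \to \infty$, the right-hand side vanishes because $X^{s,b}_{\delta_w,\pm}$ is an $L^2$-type norm on the space--time Fourier side, so dominated convergence applies to the high-frequency tail of any extension of $v_\pm$. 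Taking $\lambda = 1/n$ and a countable union over $n$ then gives $v_\pm(t,x) \to 0$ for almost every $x$, which, combined with the previous paragraph, proves the theorem.

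The genuine analytic work --- the restriction-space smoothing of the Duhamel integral and the probabilistic Strichartz estimates feeding the contraction in Theorem~\ref{prop:ProbLWP} --- has already been done in Sections~\ref{Sec:DetPre} and~\ref{Sec:ProbPre}, so the proof above is mostly assembly. I expect the only point requiring care to be the handling of the exceptional sets: Theorem~\ref{prop:ProbLWP} only produces a random existence time with a probability that tends to $1$ as $\delta \to 0$, so one must first pass to the full-measure set of $w$ for which the solution exists on some $[0,\delta_w]$, intersect it with the full-measure event of Lemma~\ref{RandomUniformPCL}, and only then carry out the deterministic-in-$w$ argument above.
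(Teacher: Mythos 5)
Your proof is correct, but it takes a genuinely different and shorter route than the paper. The paper invokes Lemma~\ref{lemma:AdaptMaxEst_NL}, i.e.\ the same truncated-flow reduction used in the deterministic Theorem~\ref{MainThm1}: one must show $\|v_{\pm}-v_{N,\pm}\|_{X^{s,b}_{\delta_w,\pm}}\to 0$, where $v_{N,\pm}$ is the Duhamel part of the truncated flow $\Phi^{N,\pm}_t$. This forces a further decomposition of the cubic difference into the pieces $I_{N,\pm}$ and $R_{N,\pm}$ of \eqref{eq:mainterm_ProbPwc}--\eqref{eq:remainders_ProbPwc}, an application of Corollary~\ref{cor:LppEmb}, probabilistic Strichartz bounds on the $R_{N,\pm}$ contribution, and an absorption argument for the $I_{N,\pm}$ contribution. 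You sidestep the truncated flows altogether: splitting $\Phi^{\pm}_t f^w_{\pm}-f^w_{\pm}=(e^{\mp it\langle D\rangle}f^w_{\pm}-f^w_{\pm})+v_{\pm}$, you dispatch the linear piece by the uniform a.s.\ convergence of Lemma~\ref{RandomUniformPCL}, and for $v_{\pm}$ you exploit that it vanishes at $t=0$ and lies in $X^{s,b}_{\delta_w,\pm}$ with $s,b>1/2$, so a single $P_{\leq N}/P_{>N}$ split plus the maximal estimate \eqref{eq:MaxEstXsb} and dominated convergence in the $X^{s,b}$-norm yields $v_{\pm}(t,x)\to 0$ a.e.\ directly. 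Your argument works precisely because the probabilistic setting gives the uniform pointwise convergence of the free evolution (unavailable deterministically at $\sigma\leq 1/2$), whereas the paper's route keeps a uniform framework for both the deterministic and probabilistic theorems at the cost of the heavier $v_{\pm}-v_{N,\pm}$ analysis. Both are valid; yours is more economical in the probabilistic case, as it makes Corollary~\ref{cor:LppEmb} and the absorption step unnecessary. One small point worth spelling out in a clean write-up: to get $\|P_{>N}v_{\pm}\|_{X^{s,b}_{\delta_w,\pm}}\to 0$ one should pick a single extension $G$ of $v_{\pm}$ achieving the restriction norm up to a factor $2$, and apply dominated convergence to $\|P_{>N}G\|_{X^{s,b}_{\pm}}$; you indicate this but it deserves an explicit line.
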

	Thus we will focus on the proof of this last result.
	\begin{proof}
		From Theorem \ref{prop:ProbLWP}, we know that $w$-almost surely we can select a $\delta_w \in (0,1)$ such that the Cauchy problem \eqref{eq:CauchyProb} admits a unique solution $\Phi_t f^w$ in $Y^{s,b,\sigma}_{\delta_w,+} \times Y^{s,b,\sigma}_{\delta_w,-}$. 
		Moreover, from now on we fix $\frac12 < b < s < 1$. Strictly speaking, this is not enough to prove the statement in full generality. However the low regularity case ($s$ close to $1/2$) is the hardest and more interesting, thus in order to avoid making the presentation too technical we will only focus on it. 
		
		Invoking Lemma \ref{lemma:AdaptMaxEst_NL} we have reduced the problem to prove the validity of 
		\begin{align*}
			\lim_{N \rightarrow \infty} \| \sup_{0 < t < \delta_w} | \Phi^{\pm}_t f^w_{\pm} - \Phi^{N,\pm}_t f^w_{\pm} | \hspace{1mm} \|_{L^2(\mathbb{T}^3)}  = 0,
		\end{align*}
		$w$-almost surely.
		We decompose the difference as
		\begin{align}
			&| \Phi^{\pm}_t f^w_{\pm}- \Phi^{N,\pm}_t f^w_{\pm} | \nonumber\\
			&\leq |P_{>N} e^{\mp i t \langle D \rangle} f^w_{\pm}| + | (\Phi^{\pm}_t f^w_{\pm} - e^{\mp i t \langle D \rangle} f^w_{\pm}) - (\Phi^{N,\pm}_t f^w_{\pm} - P_{\leq N} e^{\mp i t \langle D \rangle} f^w_{\pm}) |.
			\label{eq:1decomp_Probpwc}
		\end{align}
		Then
		\begin{align*}
		&\| \sup_{0 < t < \delta_w} | \Phi^{\pm}_t f^w_{\pm} - \Phi^{N,\pm}_t f^w_{\pm} | \hspace{1mm} \|_{L^2(\mathbb{T}^3)} \leq \| P_{>N} e^{\mp i t \langle D \rangle} f^w_{\pm} \|_{L^{\infty}([0,\delta_w] \times \mathbb{T}^3)} \nonumber \\
		&+ \left\| \sup_{0 < t < \delta_w} \left| (\Phi^{\pm}_t f^w_{\pm} - e^{\mp i t \langle D \rangle} f^w_{\pm}) - (\Phi^{N,\pm}_t f^w_{\pm} - P_{\leq N} e^{\mp i t \langle D \rangle} f^w_{\pm}) \right| \right\|_{L^2(\mathbb{T}^3)} \nonumber \\
		&= (i) + (ii).
		\end{align*} 
		Let $\lambda > 0$. From Lemma \ref{DispersiveProbEst} one can prove that there exists some constant $c>0$ and $\rho \in (0,2\sigma)$ such that
		\begin{align*}
		\mathbb{P}(  \| P_{> N} e^{\mp i t \langle D\rangle} f^w_{\pm} \|_{L^{\infty}(\mathbb{T}^3)} > \lambda ) \lesssim e^{-c \lambda^2 N^{2\sigma - \rho} \rho/(d \|f_{\pm}\|^2_{H^{\sigma}(\mathbb{T}^3)})}.
		\end{align*}
		Thus, thanks to Borel-Cantelli Lemma we have that, $w$-almost surely, there exists some $N_{\lambda} \in \mathbb{N}$ such that, for any $N > N_{\lambda}$,
		\begin{align*}
		\| P_{> N} e^{\mp i t \langle D\rangle} f^w_{\pm} \|_{L^{\infty}(\mathbb{T}^3)} \leq \lambda.
		\end{align*}
		In other words, $w$-almost surely
		\begin{equation}\label{mfkdslkdmnfsdkldfm}
			\lim_{N \to \infty} \| P_{> N} e^{\mp i t \langle D\rangle} f^w_{\pm} \|_{L^{\infty}(\mathbb{T}^3)} =0.
		\end{equation}
		Thus we have reduced the problem to prove the $w$-almost sure convergence to $0$ related to $(ii)$, that is
		\begin{align*}
			\lim_{N \rightarrow \infty} \| \sup_{0 < t < \delta_w} \left| v_{\pm} - v_{N,\pm} | \hspace{1mm} \right\|_{L^2(\mathbb{T}^3)}  = 0
		\end{align*}
		where
		\begin{align*}
			v_{N,\pm} = \Phi^{N,\pm}_t f^w_{\pm} - P_{\leq N} e^{\mp i t \langle D \rangle} f^w_{\pm}, \quad v_{\pm} = v_{\infty,\pm}.
		\end{align*}
		Recall that we are considering $s > \frac{1}{2}$ and $b > \frac{1}{2}$, thus the embedding \eqref{eq:MaxEstXsb} reduces further the problem to show
		\begin{align}\label{eq:12epsilon-smoothing}
			\lim_{N \rightarrow \infty} \|  v_{\pm} - v_{N,\pm} \|_{X^{s,b}_{\delta_w,\pm}}  = 0
		\end{align}
		$w$-almost surely. 
		
		To prove so, we decompose:
		\begin{align}
			&v_{\pm} - v_{N,\pm} \nonumber \\
			&= \pm \frac{i \eta(t)}{8} \int_0^t \frac{e^{\mp i (t-\tau) \langle D\rangle}}{\langle D\rangle}((\Phi^{+}_\tau f^w_{+} + \Phi^{-}_\tau f^w_{-})^3 - P_{\leq N}(\Phi^{N,+}_\tau f^w_{+} + \Phi^{N,-}_\tau f^w_{-})^3) d \tau
			\label{eq:diff_vvn}
		\end{align}
		and we further decompose
		\begin{align*}
			&(\Phi^{+}_\tau f^w_{+} + \Phi^{-}_\tau f^w_{-})^3 - P_{\leq N}(\Phi^{N,+}_\tau f^w_{+} + \Phi^{N,-}_\tau f^w_{-})^3 = I_{N} (\tau)+ J_{N} (\tau),
		\end{align*}
		where 
		\begin{align}
			&I_{N} (\tau) := P_{\leq N} \left( (P_{\leq N}e^{- i \tau \langle D\rangle} f^w_{+} + v_{+} + P_{\leq N}e^{i \tau \langle D\rangle} f^w_{-} + v_{-})^3 \right) \nonumber \\
			&- P_{\leq N} \left(( P_{\leq N}e^{-i \tau \langle D\rangle} f^w_{+} + v_{N,+} + P_{\leq N}e^{i \tau \langle D\rangle} f^w_{-} + v_{N,-} )^3  \right)
			\label{eq:mainterm_ProbPwc}
		\end{align}
		and
		\begin{align}
			&J_{N} (\tau) := P_{\leq N} \left( ( e^{- i \tau \langle D\rangle} f^w_{+} + v_{+} + e^{i \tau \langle D\rangle} f^w_{-} + v_{-})^3 \right) \nonumber \\
			& - P_{\leq N}  \left( ( P_{\leq N}e^{- i \tau \langle D\rangle} f^w_{+} + v_{+} + P_{\leq N}e^{ i \tau \langle D\rangle} f^w_{-} + v_{-} )^3   \right) \nonumber \\
			&+ P_{> N} (\Phi^{+}_{\tau} f^w_{+} + \Phi^{-}_{\tau} f^w_{-})^3.
			\label{eq:remainders_ProbPwc}
		\end{align}
		Regarding the contribution of $ P_{> N} (\Phi^{+}_{\tau} f^w_{+} + \Phi^{-}_{\tau} f^w_{-})^3$ to  \eqref{eq:diff_vvn}, we note that 
		\begin{align*}
			\lim_{N \rightarrow \infty}\left\|\eta(t) \int_0^t \frac{e^{\mp i (t-\tau) \langle D\rangle}}{\langle D\rangle}(P_{> N}(\Phi^{+}_{\tau} f^w_{+} + \Phi^{-}_{\tau} f^w_{-})^3) d \tau \right\|_{X^{s,b}_{\delta_w,\pm}} = 0
		\end{align*}
		by dominated convergence and Lemma \ref{lemma:DetFixedPoint} since 
		\begin{align*}
			&\left\|\eta(t) \int_0^t \frac{e^{\mp i (t-\tau) \langle D\rangle}}{\langle D\rangle}( (\Phi^{+}_{\tau} f^w_{+} + \Phi^{-}_{\tau} f^w_{-})^3) d \tau \right\|_{X^{s,b}_{\delta_w,\pm}} \nonumber \\
			&\lesssim \delta_w^{s- \frac12} \| (\Phi^{+}_t f^w_{+},\Phi^{-}_t f^w_{-})\|_{X^{s,b}_{\delta_w,+} \times X^{s,b}_{\delta_w,-}}^3; 
		\end{align*}
		and the right hand side is $w$-almost surely finite (see Theorem~\ref{Prop:BT}).
		
		For the other contribution of $J_{N}$ to  \eqref{eq:diff_vvn}, we apply Corollary \ref{cor:LppEmb}:
		\begin{align}
			&\left\|\eta(t) \int_0^t \frac{e^{\mp i (t-\tau) \langle D\rangle}}{\langle D\rangle}\Big(  J_N(\tau) - P_{> N}(\Phi^{+}_{\tau} f^w_{+} + \Phi^{-}_{\tau} f^w_{-})^3 \Big) d \tau \right\|_{X^{s,b}_{\delta_w,\pm}} \nonumber\\
			&\lesssim  R_{\omega}^2 \delta_w^{s- \frac12} \| P_{> N} (e^{- i t \langle D \rangle} f^w_{+} + e^{ i t \langle D \rangle} f^w_{-}) \|_{L^{\frac{2}{1-s}}_{\delta_w}L^{\frac{6}{2-s}}}, \label{mfkdasldkjnfgkdslgmk}
		\end{align}
		where 
		$$
		R_{w}^2 := \| e^{- i t \langle D\rangle} f^w_{+} + e^{ i t \langle D\rangle} f^w_{-} \|_{L^{\frac{2}{1-s}}_{\delta_w}L^{\frac{6}{2-s}}}^2  + \| v_{+} + v _{-} \|_{L^{\frac{2}{1-s}}_{\delta_w}L^{\frac{6}{2-s}}}^2.
		$$
		In the same way that we proved \eqref{mfkdslkdmnfsdkldfm} 
		we can prove that $w$-almost surely
		$$
		\lim_{N \to \infty}	\| P_{> N} e^{\mp i t \langle D \rangle} f^w_{\pm} \|_{L^{\frac{2}{1-s}}_{\delta_w}L^{\frac{6}{2-s}}} =0.
		$$
		At the same time, invoking Lemma \ref{StrichartzProbEst} again we have that 
		$$\| e^{\pm i t \langle D\rangle} f^w_{\pm} \|_{L^{\frac{2}{1-s}}_{\delta_w}L^{\frac{6}{2-s}}}$$ is $w$-almost surely finite. 
		Moreover, by the embedding \eqref{EstimateStrichartzRestriction}  we have
		$$ 
		\| v_{\pm} \|_{L^{\frac{2}{1-s}}_{\delta_w}L^{\frac{6}{2-s}}} 
		\lesssim   \| v_{\pm} \|_{X^{s,b}_{\delta_{w}, \pm}}  $$
		and the right hand side is $w$-almost surely finite by Theorem \ref{Prop:BT}.  
		Thus $R_{w}$ is $w$-almost surely finite and \eqref{mfkdasldkjnfgkdslgmk} goes to zero as $N \to \infty$, $w$-almost surely.
		
		In conclusion, we have proved that $w$-almost surely
		$$
		\lim_{N \to \infty} \| \eta(t) \int_0^t \frac{e^{\mp i (t-\tau) \langle D\rangle}}{\langle D\rangle} J_N (\tau) d\tau \|_{X^{s,b}_{\delta_w,\pm}} = 0.
		$$
		Thus, taking $X^{s,b}_{\delta_w,\pm}$ norm of \eqref{eq:diff_vvn} we arrive $w$-almost surely to 
		\begin{align}
			\|		v_{\pm} - v_{N,\pm} \|_{X^{s,b}_{\delta_w,\pm}} \lesssim 
			\| \eta(t) \int_0^t \frac{e^{\mp i (t-\tau) \langle D\rangle}}{\langle D\rangle}I_{N}(\tau) d \tau \|_{X^{s,b}_{\delta_w,\pm}}
			+ o_N(1).
		\end{align}
		
		We will show that the first term on the right hand side can be absorbed into the 
		left hand side, so the proof is concluded. 
		In order to do so we apply Corollary \ref{cor:LppEmb} again 
		\begin{align}\label{eq:Pball-lebesgueest}					
			&\left\|\eta(t) \int_0^t \frac{e^{\mp i (t-\tau) \langle D\rangle}}{\langle D\rangle}  I_{N} ( \tau) d \tau \right\|_{X^{s,b}_{\delta_w,\pm}} \nonumber\\
			&= \left\|\eta(t) \int_0^t \frac{e^{\mp i (t-\tau) \langle D\rangle}}{\langle D\rangle} P_{\leq N} \left( (P_{\leq N}e^{- i \tau \langle D\rangle} f^w_{+} + v_{+} + P_{\leq N}e^{i \tau \langle D\rangle} f^w_{-} + v_{-})^3 \right. \right. \nonumber \\
			&\left. \left. -  ( P_{\leq N}e^{-i \tau \langle D\rangle} f^w_{+} + v_{N,+} + P_{\leq N}e^{i \tau \langle D\rangle} f^w_{-} + v_{N,-} )^3  \right) d\tau \right\|_{X^{s,b}_{\delta_w,\pm}} \nonumber\\ 
			&\leq C  (R_{w,1}^2 + R_{w,2}^2)  \delta_w^{{s - \frac12}}  \|  v_{+} + v_{-} - v_{N,+} - v_{N,-} \|_{L^{\frac{2}{1-s}}_{\delta_w}L^{\frac{6}{2-s}}}  \nonumber\\
		\end{align}
		for some constant $C > 0$, where 
		\begin{align*}
		&R_{w,1}^2 :=  \| e^{- i t \langle D\rangle} f^w_{+} + e^{ i t \langle D\rangle} f^w_{-} \|_{L^{\frac{2}{1-s}}_{\delta_w}L^{\frac{6}{2-s}}}^2 , \nonumber \\ 
		 &R_{w,2}^2 :=
		\|  v_{N,+} + v_{N,-} \|_{L^{\frac{2}{1-s}}_{\delta_w}L^{\frac{6}{2-s}}}^2  + \| v_{+} + v_{-} \|_{L^{\frac{2}{1-s}}_{\delta_w}L^{\frac{6}{2-s}}}^2.
		\end{align*}
		
		Now, proceeding as in the proof of Theorem \ref{Prop:BT} we see that
		$w$-almost surely we have  $C R_{w,2}^2  \delta_w^{s-\frac12} \leq 1/4$ for all $\delta_w$ small enough. Thus, recalling also \eqref{eq:fwGralProbBound}, 
		it is easy to see that $w$-almost surely we can produce a (small) $\delta_w$ such that  
		$$
		C (R_{w,1}^2 + R_{w,2}^2)  \delta_w^{{s - \frac12}} \leq 1/2, 
		$$
		that concludes the proof.
	\end{proof}
	
	\section*{Acknowledgments}
	The authors are supported by the Basque Government through the program BERC 2022-2025 (BCAM), by the project PID2021-123034NB-I00
	funded by MCIN/ AEI /10.13039/501100011033 and by the Severo Ochoa accreditation CEX2021-001142-S (BCAM). RL is also supported by the Ramon y Cajal fellowship RYC2021-031981-I. PM is also supported by the predoctoral program of the Education Department of the Basque Government.

\end{document}